\numberwithin{equation}{section}
\newtheorem{cor}[equation]{Corollary}
\newtheorem{add}[equation]{Addendum}
\newtheorem{lem}[equation]{Lemma}
\newtheorem{prop}[equation]{Proposition}
\newtheorem{thm}[equation]{Theorem}
\newtheorem{Example}[equation]{Example}
\newtheorem{remark}[equation]{Remark}
\newenvironment{rmk}{\begin{remark}\rm}{\end{remark}}
\def\co{\colon\thinspace}
\newcommand{\norm}[1]{\lVert#1\rVert}
\newcommand{\Ric}{\mbox{Ric}}
\newcommand{\Iso}{\mbox{Iso}}
\newcommand{\e}{\varepsilon}
\def\a{\alpha}
\def\G{\Gamma}
\def\g{\gamma}
\def\D{\Delta}
\def\b{\beta}
\def\d{\partial}
\def\r{\rho}
\def\l{\lambda}
\def\S1{\bf S^1}
\begin{document}

\abovedisplayskip=6pt plus3pt minus3pt
\belowdisplayskip=6pt plus3pt minus3pt

\title[Hyperplane arrangements and relative hyperbolicity]
{\bf Hyperplane arrangements in negatively curved manifolds
and relative hyperbolicity}

\thanks{\it 2000 Mathematics Subject classification.\rm\ Primary
20F65.
Keywords: relatively hyperbolic group, hyperplane arrangements, simplicial volume, negatively curved manifold.}\rm

\author{Igor Belegradek\and G. Christopher Hruska}
\address{Igor Belegradek\\School of Mathematics\\ Georgia Institute of
Technology\\ Atlanta, GA 30332-0160}\email{ib@math.gatech.edu}
\address{G. Christopher Hruska\\Department of Mathematical Sciences\\
University of Wisconsin--Milwaukee\\
P.O. Box 413, Milwaukee, WI 53201-0413}\email{chruska@uwm.edu}
\thanks{Supported by NSF
\# DMS-0804038 (Belegradek) and \#DMS-0808809 (Hruska).}
\date{}
\begin{abstract} 
We show that certain aspherical manifolds
arising from hyperplane arrangements in negatively curved manifolds
have relatively hyperbolic fundamental group.
\end{abstract}
\maketitle

\section{Introduction}

Let $M$ be a connected, complete, finite volume 
Riemannian $n$-manifold of sectional curvature
satisfying $\varkappa\le \sec(M)\le -1$ for
some constant $\varkappa$, and
let $S\subset M$ be a subset whose preimage to 
the universal cover of $M$ is the union of a 
locally finite family $\mathcal D$ of hyperplanes, 
where a {\it hyperplane} is a complete totally 
geodesic submanifold of codimension two.

In this paper we study the fundamental group of $M\setminus S$, 
which clearly surjects onto $\pi_1(M)$ as $S$ has codimension two.
One may think of $\pi_1(M\setminus S)$ as an ``overlattice'', i.e.
a group that comes with a natural surjection onto a 
lattice (cf.~\cite[page 192]{Gro-asym-inv}). 
This paper explores to what extent $\pi_1(M\setminus S)$
inherits some rigidity properties of lattices, and our approach is
to find conditions on $M, S$ implying that
$\pi_1(M\setminus S)$ is non-elementary relatively hyperbolic.
In recent years many powerful techniques have been developed 
to better understand relatively hyperbolic groups, and this
paper allows to apply the techniques to studying certain 
$M\setminus S$'s.

Set $m=\left[\frac{n}{2}\right]$. 
Following Allcock~\cite{All-JDG} we call 
$\mathcal D$ {\it normal\ \!} if hyperplanes in $\mathcal D$
are either disjoint or orthogonal, and 
if for any point $p$ lying on hyperplanes $h_1,\dots ,h_k$
in $\mathcal D$
there is a linear isomorphism of the tangent space at
$p$ onto $\mathbb R^{n-2m}\times \mathbb C^m$
that maps the tangent space to each $h_i$ to
the product of $\mathbb R^{n-2m}$
with a coordinate hyperplane in $\mathbb C^m$. 
We call $S$ {\it normal\ \!} if $\mathcal D$ is normal.
Many examples of normal $S\subset M$ 
are known when $M$ is real hyperbolic  or complex hyperbolic.

Gromov stated in~\cite[Section 4.4A]{Gro-hyp-gr}, and Allcock
proved in~\cite{All-JDG} that if $S$ is normal in $M$,
then the metric completion of the universal Riemannian cover of
$M\setminus S$ is CAT$(-1)$, and Allcock furthermore used this to
show that the manifold $M\setminus S$ is aspherical (i.e. its
universal cover is contractible); thus most if not all topological
information about $M\setminus S$ is encoded in its fundamental group.
 
We work with the combinatorial
definition of a relatively hyperbolic group due to
Bowditch~\cite[Definition 2]{Bow-rel}, and call a subgroup of
relatively hyperbolic group 
{\it non-elementary\,} unless it is finite, virtually-$\mathbb  Z$,
or lies in a peripheral subgroup.
Building on ideas of Bowditch~\cite{Bow-rel}, we prove:

\begin{thm} \label{thm-intro: convex} 
Suppose that $M$ contains a closed, locally convex
subset $V$ such that $M\setminus V$ is nonempty 
and precompact in $M\!$, and $S$ lies in the interior of $V$.
If $S$ is normal in $M\!$, then 
$\pi_1(M\setminus S)$ is non-elementary relatively hyperbolic,
where peripheral subgroups are 
the fundamental groups of components of $V\setminus S$;
furthermore, each component of $V\setminus S$ is aspherical, and
its inclusion into $M\setminus S$ is $\pi_1$-injective.
\end{thm}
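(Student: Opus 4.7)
The plan is to apply Bowditch's combinatorial criterion for relative hyperbolicity to the action of $G=\pi_1(M\setminus S)$ on the CAT$(-1)$ space $X$ supplied by Allcock, where $X$ is the metric completion of the universal Riemannian cover of $M\setminus S$. The group $G$ acts properly and isometrically on $X$ with quotient $M$, and the quotient map $\pi\co X\to M$ is a local isometry off $\pi^{-1}(S)$. The family of closed convex subsets of $X$ obtained as lifts of components of $V$ will play the role of the ``horoball-like'' peripheral regions, and precompactness of $M\setminus V$ will supply cocompactness on their complement.

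I would first set up the peripheral family. Because $V$ is closed and locally convex in $M$, and the branch locus $\pi^{-1}(S)$ lies in $\Int \pi^{-1}(V)$ (since $S\subset\Int V$), the preimage $\pi^{-1}(V)$ is closed and locally convex in $X$. Label its connected components $\{W_\a\}$, a $G$-invariant family. The crucial geometric claim is that each $W_\a$ is genuinely convex in $X$. I would establish this by a Cartan--Hadamard-style argument: develop the universal cover of each component $V_i$ of $V\setminus S$ locally isometrically into $X$, and use that $X$ is a simply connected CAT$(-1)$ space to conclude that the developing map is an isometric embedding onto a convex subset. This single step yields (a) $\pi_1$-injectivity of $V_i\hookrightarrow M\setminus S$ (injectivity of the developing map), (b) asphericity of $V_i$ (its universal cover is a convex subset of the contractible space $X$), and (c) an identification of $\text{Stab}_G(W_\a)$ with the corresponding $\pi_1(V_i\setminus S)$.

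With the $G$-invariant family $\mathcal W=\{W_\a\}$ of closed convex subsets in hand, the next step is to verify Bowditch's criterion. Compactness of $M\setminus\Int V$ gives cocompactness of the $G$-action on $X\setminus\bigcup_{\mathcal W}\Int W$, and local finiteness of $\mathcal D$ together with local convexity forces $\mathcal W$ to be locally finite in $X$ with uniformly bounded pairwise overlaps. In a proper CAT$(-1)$ space closed convex subsets are uniformly quasi-convex, and the coned-off space obtained by collapsing each $W_\a$ to a point (or, in a combinatorial model, attaching a Groves--Manning horoball) is Gromov hyperbolic, fine, and admits a cocompact $G$-action with finite edge stabilizers away from the cone vertices. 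This verifies Bowditch's Definition~2 with peripheral subgroups $\{\text{Stab}_G(W_\a)\}$, which by step one are exactly the $\pi_1$'s of components of $V\setminus S$.

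The main obstacle I anticipate lies in the convexity claim at singular points of $X$ lying over $S$. Allcock's CAT$(-1)$ structure on $X$ is branched along $\pi^{-1}(S)$, and one must show that geodesics joining two points of $W_\a$ cannot escape through these singular strata. The normality hypothesis on $\mathcal D$ does the essential work here by pinning down the local product-of-coordinate-hyperplanes model near $S$, from which geodesic behaviour in $X$ can be controlled by explicit comparison. Non-elementarity of $G$ relative to this peripheral structure will then follow from $M\setminus V$ being nonempty and open in a pinched negatively curved finite-volume manifold, which supplies hyperbolic isometries of $X$ not contained in any peripheral subgroup.
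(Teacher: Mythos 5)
Your overall architecture is the same as the paper's: act on Allcock's CAT$(-1)$ completion $X$, take as peripheral regions the components of the preimage of $V$, obtain their convexity from local convexity via the local-to-global principle for locally convex subsets of CAT$(0)$ spaces (\cite[Proposition II.4.14]{BH} -- your developing-map argument is essentially this), and read off asphericity, $\pi_1$-injectivity, and the identification of stabilizers with fundamental groups of components of $V\setminus S$, exactly as in Lemma~\ref{lem: preim conv}, Lemma~\ref{lem: univ cover} and Theorem~\ref{thm: rel hyp in Section 4}. The obstacle you single out -- geodesics escaping through the singular strata over $S$ -- is dissolved more cheaply than you propose: since $S\subset\mathrm{Int}(V)$, the completion points of $p^{-1}(V)$ lie in $\mathrm{Int}(p^{-1}(V))$, where local convexity is automatic from local convexity of $X$; no comparison in the local product model is needed.

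The genuine gap is in your verification of Bowditch's criterion. You assert that $X$ is proper and that the peripheral family is locally finite with bounded overlaps; both assertions are false, and the paper explicitly flags them as the central difficulties ($X$ fails to be locally compact along $p^{-1}(S)$, and the set of components of $p^{-1}(V)$ is \emph{not} locally finite in $X$). Consequently ``the coned-off space is hyperbolic and fine'' is a statement of what must be proved, not an argument: fineness is precisely where the work lies. The paper's substitute for properness and local finiteness is (i) $\varepsilon$-separation of the peripheral components, coming from the positive normal injectivity radius of a $C^1$-smoothed $\partial V$ (Remark~\ref{rmk: c1 boundary}), which yields bounded penetration (Lemma~\ref{lem: bounded penetration}); and (ii) a circuit-counting argument in which a circuit of length $n$ in the nerve graph is realized by a loop of controlled length, then pushed off each peripheral set by nearest-point projection onto its boundary, landing in $X\setminus\bigcup_\alpha\mathrm{Int}(W_\alpha)$ -- which \emph{is} locally compact and complete, hence proper by Hopf--Rinow, and in which the boundaries $\{\partial W_\alpha\}$ \emph{are} locally finite by $\varepsilon$-separation. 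Without some version of this (Theorem~\ref{thm: basic}), your proof does not close. A related slip: the action of $\pi_1(M\setminus S)$ on $X$ itself is not properly discontinuous (completion points can have infinite stabilizers); properness holds only on $X_0$ and on the complement of the peripheral interiors, which is where all cocompactness and finiteness arguments must be run.
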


To apply this theorem one needs to find $V$, and e.g. 
if $M$ is compact, then a natural candidate for $V$ would be (a
sufficiently small $\e$-neighborhood of) 
the smallest locally convex subset 
of $M$ that contains $S$; if this $\e$-neighborhood is a proper subset 
of $M$, then Theorem~\ref{thm-intro: convex} applies.
The same is true for non-compact $M$ except that we also need to
require that $V$ contains all cusps.

The simplest picture emerges when $S$ is a
compact smooth submanifold of $M$. 
Then $S$ is normal, and we can take $V$ 
to be the union of a small $\e$-tubular neighborhood of $S$
and sufficiently small disjoint cusp neighborhoods of $M$, which
implies the following.

\begin{cor}\label{cor-intro: disjoint}
If $S$ is a compact smooth submanifold of $M$, then 
the group $\pi_1(M\setminus S)$ is non-elementary relatively hyperbolic,
where the peripheral subgroups are either
the fundamental groups of circle bundle
over components of $S\!$, or the fundamental groups of
compact infranilmanifolds which are cusp cross-sections
of $M$.
\end{cor}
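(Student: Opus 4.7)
The plan is to apply Theorem~\ref{thm-intro: convex} with $V$ equal to a small closed tubular neighborhood of $S$ together with small disjoint horoball cusp neighborhoods of $M$. The components of the smooth submanifold $S$ are pairwise disjoint, so their lifts form a disjoint family $\mathcal D$ of totally geodesic hyperplanes in $\widetilde M$. Normality is then automatic: the disjointness portion is trivial, and through any point $p$ of the lift of $S$ passes a single hyperplane whose tangent space may be sent to one coordinate hyperplane of $\mathbb R^{n-2m}\times\mathbb C^m$.

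Next I would choose pairwise disjoint open cusp neighborhoods $C_1,\dots,C_k$ of $M$, each the quotient of a horoball, small enough that $S\cap \overline{C_i}=\emptyset$ (possible since $S$ is compact) and that $M\setminus \bigcup_i C_i$ is compact (standard thick-thin decomposition). For $\e>0$ sufficiently small, the closed $\e$-tubular neighborhood $N_\e(S)$ is disjoint from each $\overline{C_i}$, has one component per component of $S$, and is a proper subset of $M$. Set $V=N_\e(S)\cup\bigcup_i \overline{C_i}$. Then $V$ is closed, $S$ lies in the interior of $V$, and $M\setminus V$ is nonempty and precompact. For local convexity I would check each kind of piece separately after lifting to $\widetilde M$: horoballs in the CAT$(-1)$ space $\widetilde M$ are convex (as sublevel sets of convex Busemann functions), and $\e$-neighborhoods of totally geodesic submanifolds in nonpositive curvature are convex (as sublevel sets of the convex distance function $d(\cdot,h)$ for $h\in\mathcal D$).

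Theorem~\ref{thm-intro: convex} would then yield that $\pi_1(M\setminus S)$ is non-elementary relatively hyperbolic with peripheral subgroups the fundamental groups of the components of $V\setminus S$. These components fall into two types. Each $\overline{C_i}$ is by construction disjoint from $S$ and from the tubular pieces, hence is a component of $V\setminus S$; it deformation retracts onto its boundary cross-section, which is a compact infranilmanifold by the Margulis lemma applied to pinched negative curvature. For each component $S_j$ of $S$, the set $N_\e(S_j)\setminus S_j$ is a component that deformation retracts onto the unit normal sphere bundle of $S_j$, which is a circle bundle since $S$ has codimension two. This matches the stated peripheral structure.

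The main technical issue is coordinating the choice of $\e$ and the cusp depths so that the resulting $V$ is simultaneously closed, locally convex, disjoint in its pieces, and has nonempty precompact complement. Each individual ingredient (convexity of horoballs and of metric neighborhoods of totally geodesic sets, the infranilmanifold structure of cusp cross-sections) is standard in pinched negative curvature; the only nontrivial content of the proof is verifying that everything fits together to invoke Theorem~\ref{thm-intro: convex}.
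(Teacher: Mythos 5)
Your proposal is correct and follows essentially the same route as the paper: the authors note that a compact smooth $S$ is automatically normal (distinct lifts of an embedded submanifold are disjoint, making the normality condition vacuous) and take $V$ to be a small $\e$-tubular neighborhood of $S$ together with sufficiently small disjoint cusp neighborhoods, then invoke Theorem~\ref{thm-intro: convex}. The details you supply — convexity of horoballs and of $\e$-neighborhoods of totally geodesic submanifolds, and the identification of the components of $V\setminus S$ as circle bundles over components of $S$ or cusp neighborhoods retracting to infranil cross-sections — are exactly the "structure of finite volume manifolds of pinched negative curvature" the paper appeals to.
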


Corollary~\ref{cor-intro: disjoint} was previously proved 
in~\cite{Bel-rh-warp, Bel-ch-warp} in the special cases when $M$
is real hyperbolic, or both $M$, $S$ are complex hyperbolic. 
The method in~\cite{Bel-rh-warp, Bel-ch-warp} involve
delicate warped product computations, which is totally different
from the approach adopted in the present article.
 
\begin{cor}\label{cor-intro: disjoint appl}
If $n\ge 3$ and $S$ is 
a compact smooth submanifold of $M$, then 
\newline
$\bullet$
$\pi_1(M\setminus S)$ has solvable word and conjugacy problems,
has finite asymptotic dimension and rapid decay property,
is co-Hopf and residually hyperbolic, 
has finite outer automorphism group; 
\newline
$\bullet$
if $\pi_1(M\setminus S)$ splits nontrivially as an amalgamated
product or HNN-extension over a subgroup $K$,
then $K$ contains a non-abelian free subgroup;
\newline
$\bullet$
if $M$ is compact, then $\pi_1(M\setminus S)$
is biautomatic and satisfies Strong Tits Alternative.
\end{cor}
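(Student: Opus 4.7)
The plan is to deduce every assertion from Corollary~\ref{cor-intro: disjoint} by applying known preservation theorems for relatively hyperbolic groups. Set $G = \pi_1(M \setminus S)$. By that corollary $G$ is non-elementary relatively hyperbolic, and each peripheral subgroup falls into one of two classes: (i) a virtually nilpotent group (the fundamental group of a compact infranilmanifold cusp cross-section); or (ii) a central $\mathbb Z$-extension $1 \to \mathbb Z \to P \to \pi_1(S_i) \to 1$, where $S_i$ is a component of $S$. In case (ii) the quotient $\pi_1(S_i)$ is word hyperbolic because $S_i$ is compact and totally geodesic, hence inherits the strictly negatively curved metric of $M$. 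When $M$ is compact only case (ii) occurs.

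The first step is to check that every peripheral subgroup already satisfies each listed property. For class (i) this is classical. For class (ii), solvability of word and conjugacy problems, finite asymptotic dimension, rapid decay, and the Tits alternative all pass from the word hyperbolic quotient $\pi_1(S_i)$ up through the central $\mathbb Z$-extension, and biautomaticity of such a central extension is the Neumann--Reeves theorem.

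The second step is to lift each property from the peripherals to $G$ via the appropriate preservation theorem: Farb/Bumagin for the word and conjugacy problems, Osin for finite asymptotic dimension, Drutu--Sapir for rapid decay, the Osin/Groves--Manning iterated Dehn filling theorem for residual hyperbolicity, Bowditch-style JSJ theory (as developed by Groves--Manning, Dahmani, and Belegradek) for the co-Hopf property and finiteness of $\mathrm{Out}(G)$, Rebbechi for biautomaticity in the compact case, and a Tits-alternative theorem for relatively hyperbolic groups for the Strong Tits Alternative. The restriction on the splitting subgroup $K$ follows from Bowditch's result that in a non-elementary relatively hyperbolic group any splitting over a subgroup without a nonabelian free subgroup has edge group either virtually cyclic or conjugate into a peripheral, combined with the observation that our particular peripherals admit no such splitting induced from $G$.

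The main obstacle, and the reason for the hypothesis $n \ge 3$, is the co-Hopf property and the finiteness of $\mathrm{Out}(G)$: these require that $G$ be one-ended and that its peripherals be rigid enough for the JSJ machinery to give a rigid vertex containing all of $G$. In dimension $n = 2$ the hyperplanes are points, the circle-bundle peripherals collapse to $\mathbb Z$, and $G$ becomes a free group of rank at least two, for which neither property holds; the condition $n \ge 3$ is precisely what prevents this degeneration.
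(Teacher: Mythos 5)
Your proposal follows essentially the same route as the paper: identify the peripheral subgroups as virtually nilpotent groups or $\mathbb Z$-by-hyperbolic circle-bundle groups, check each property on these peripherals, and lift to $\pi_1(M\setminus S)$ via the standard preservation theorems (Farb/Bumagin for word and conjugacy problems, Osin for asymptotic dimension, Dru{\c{t}}u--Sapir for rapid decay and finiteness of $\mathrm{Out}$, Rebbechi/Neumann--Reeves for biautomaticity, Osin's Dehn filling for residual hyperbolicity, and non-splitting over elementary subgroups combined with Tukia's theorem for the assertion about $K$), which is exactly the list of citations the paper gives. The only quibble is that the fiber subgroup of the circle bundle is in general only normal rather than central (the quotient may act by $-1$ when the normal bundle of a component of $S$ is non-orientable), but passing to an index-two subgroup shows this does not affect any of the cited preservation results.
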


As in~\cite[Theorem 1.2]{Bel-rh-warp}, one also gets
a Mostow type rigidity theorem, namely if $S_i$ is 
compact and normal in $M_i$ for $i=1,2$, and if each $M_i$ is
locally symmetric of dimension $>2$, then any 
homotopy equivalence $M_1\setminus S_1\to M_2\setminus S_2$
is homotopic to the restriction of an isometry $M_1\to M_2$ that
takes $S_1$ to $S_2$. This is only new in the ``exceptional'' case
when one of $(S_i, M_i)$'s is locally modelled on a totally real plane
in the complex hyperbolic plane. 

Most results on structure and properties of 
relatively hyperbolic groups are relative in nature, i.e.
properties of the peripheral subgroups are inherited by
the ambient relatively hyperbolic group. 
In general, the peripheral subgroups
in Theorem~\ref{thm-intro: convex} are similar to 
the group $\pi_1(M\setminus S)$ and little is known about them.
The reason relative hyperbolicity gives so much information
for compact embedded $S$ is that it is possible to choose 
$V$ such that the topology of $V\setminus S$ is easy to understand.

The main result of this paper ensures the existence of $V$ 
with an easy to understand $V\setminus S$,
provided $S$ is normal and ``sparse''.
To make this precise it helps
to introduce the following notations.
Recall that if $M$ is non-compact, then each end of $M$ has an 
arbitrary small, closed, connected neighborhood which is
called a {\it cusp}; its preimage to
the universal cover of $M$ is the union
of a family of disjoint horoballs. Fix a collection
of disjoint cusps of $M$, one for each
end, and let $Q$ be their union, and let $\mathcal H$
be the corresponding family of pairwise disjoint horoballs.
We assume that either $S$ and $Q$ are disjoint, or $S$ intersects $\d Q$ orthogonally,
which can be always arranged by choosing $Q$ sufficiently small,
as noted in the beginning of
Section~\ref{sec: notations}.

We say that $\{S,Q\}$ is {\it $r$-sparse\ \!} if 
for any disjoint sets $A, B\in \mathcal D\cup \mathcal H$
that contain points $a\in A$ and $b\in B$ with 
$\mathrm{dist}(a,b)<r$, one has 
$a,b\in C$ for some $C\in\mathcal H$.
Since $A\cap B=\varnothing$, 
and since all horoballs in $\mathcal H$ are disjoint
it then follows that $A, B$ are hyperplanes in $\mathcal D$
that are asymptotic to the center of the horoball $C$.
In other words, $r$-sparse means that the only way two disjoint
convex sets in $\mathcal D\cup \mathcal H$ can come within
distance $r$ is inside a horoball from $\mathcal H$.
We say that $\{\mathcal D,\mathcal H\}$ is {\it $r$-sparse\ \!} 
if $\{S,Q\}$ is $r$-sparse. Compactness of $M\setminus\mathrm{Int}(Q)$ 
implies that any $\{S,Q\}$ is $r$-sparse for each sufficiently small $r$.

All this holds verbatim when $M$ is compact by letting $Q=\varnothing$.
In this case the definition of sparseness simplifies: 
$\{S,\varnothing\}$ 
is $r$-sparse if and only if 
the distance between any two disjoint hyperplanes in 
$\mathcal D$ is $\ge r$. 
We prove:

\begin{thm}\label{thm-intro: const r, cusp}  
There is a positive constant $r(\varkappa, n)$ 
such that if $S$ is normal and $\{S,Q\}$ is $r(\varkappa, n)$-sparse,
then $M\setminus S$ is diffeomorphic to the interior 
of a compact manifold $N$ such that\newline
\textup{(1)}
each component of $\d N$ is aspherical, \newline
\textup{(2)}
the inclusion $\d N\hookrightarrow N$ 
is $\pi_1$-injective,
\newline
\textup{(3)}
$\pi_1(N)$ is non-elementary relatively hyperbolic,
\newline
\textup{(4)}
conjugacy classes of 
peripheral subgroups bijectively
correspond to the fundamental groups of components of $\d N$,
\newline
\textup{(5)}
if $n\ge 3$ and $\pi_1(N)$ splits nontrivially as an amalgamated
product or HNN-extension over a subgroup $K$,
then $K$ contains a non-abelian free subgroup;
\newline
\textup{(6)}
if $n\ge 3$ and $S$ is compact, then 
$\mathrm{Out}(\pi_1(N))$ is finite;
\newline
\textup{(7)}
if $n\ge 3$, then $\pi_1(N)$ is co-Hopf;
\newline
\textup{(8)} if $n=4$, then $\mathrm{Out}(\pi_1(N))$ is finite,
and furthermore $\pi_1(N)$ has solvable word and conjugacy problems, has
finite asymptotic dimension, and is
residually hyperbolic. 
\end{thm}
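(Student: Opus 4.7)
The plan is to construct a closed locally convex subset $V\supset S$ satisfying the hypotheses of Theorem~\ref{thm-intro: convex}, identify $V\setminus S$ with a collar of the boundary of a compact manifold $N$ whose interior is diffeomorphic to $M\setminus S$, and deduce parts (5)--(8) from general consequences of relative hyperbolicity combined with the structure of the peripheral subgroups.

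First I would choose $\e=\e(\varkappa,n)$ so that in the universal cover $\tilde M$ the closed $\e$-neighborhood of any totally geodesic codimension-two submanifold is strongly convex; such an $\e$ exists by comparison geometry in curvature $\varkappa\le\sec\le-1$. I would then fix $r=r(\varkappa,n)$ large enough that under $r$-sparseness the closed $\e$-tubes around disjoint hyperplanes in $\mathcal D$ not sharing a horoball are pairwise disjoint, and large enough that the corner constructions below have the room they need. The set $V$ is then built from $Q$ together with a locally convex neighborhood of each connected component of the singular stratum of $\bigcup\mathcal D$: along a non-intersecting hyperplane the $\e$-tube suffices, while near an orthogonal intersection one uses the normal local model $\R^{n-2m}\times\C^m$ to glue the tubes into a locally convex neighborhood of the intersection stratum, replacing the would-be concave outer corner by a suitable convex piece. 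Sparseness ensures these pieces are pairwise disjoint, so $V$ is locally convex, closed, contains $S$ in its interior, and has $M\setminus V$ nonempty and precompact because $V\supset Q$ contains all cusps.

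Theorem~\ref{thm-intro: convex} now gives that $\pi_1(M\setminus S)$ is non-elementary relatively hyperbolic with peripheral subgroups the fundamental groups of components of $V\setminus S$, each aspherical and $\pi_1$-injective in $M\setminus S$. Each such component deformation retracts onto a compact submanifold of $\d V$: away from intersection strata the punctured disk bundle around a hyperplane retracts to the circle-bundle boundary, and near an intersection stratum one combines fiberwise retractions prescribed by the normal local model. This shows $M\setminus S$ is diffeomorphic to the interior of the compact manifold $N=\overline{M\setminus V}$, with components of $\d N$ in bijection with components of $V\setminus S$, and establishes (1)--(4).

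Parts (5)--(8) follow from general consequences of relative hyperbolicity applied to the specific peripheral subgroups, which are fundamental groups of compact aspherical manifolds with bundle-like structure over totally geodesic submanifolds of $M$ (plus compact infranilmanifold cusp cross-sections). For (5), a splitting of $\pi_1(N)$ over a subgroup $K$ not containing $F_2$ forces $K$ into a peripheral subgroup by general relatively hyperbolic theory, and the peripheral subgroups do not split over such $K$ for $n\ge3$ by their nilpotent or bundle structure. Finiteness of $\mathrm{Out}$ and co-Hopfness in (6)--(7) follow from theorems on relatively hyperbolic groups with quasi-isometrically rigid peripheral subgroups. For (8), when $n=4$ each peripheral subgroup is the fundamental group of a compact aspherical $3$-manifold, and the listed properties follow from $3$-manifold theory combined with standard transfer results for relatively hyperbolic groups. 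The main obstacle is the construction of a locally convex $V$: the naive union of closed $\e$-tubes around $S$ fails to be locally convex at orthogonal intersection points, and carefully convexifying near each intersection stratum using the normal local model is what forces the sparseness constant $r$ to depend on both $\varkappa$ and $n$.
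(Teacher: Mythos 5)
Your overall skeleton (build a closed locally convex $V\supset S\cup Q$, feed it to Theorem~\ref{thm-intro: convex}, identify $V\setminus S$ with a collar of $\d N$, then derive (5)--(8) from general relative-hyperbolicity results plus the structure of the peripherals) matches the paper's, but the central step --- the construction of $V$ --- has a genuine gap. A connected, closed, locally convex subset of a CAT(0) space is convex (\cite[Proposition II.4.14]{BH}, which the paper relies on), so whatever ``locally convex neighborhood of a component'' you build in the universal cover must contain the convex hull of the entire component of $\bigcup(\mathcal D\cup\mathcal H)$ it surrounds. For a component consisting of a long chain of pairwise orthogonally intersecting hyperplanes this hull is a global object: already for two crossing geodesics in $\mathbf{H}^2$ the convex hull is the ideal quadrilateral on their endpoints, which protrudes a definite distance from the union, and nothing in the local model $\mathbb R^{n-2m}\times\mathbb C^m$ at a single intersection stratum controls how far the hull of the whole chain extends. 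The paper's proof supplies exactly this missing step: each component of the arrangement is quasiconvex because any two of its points are joined by a piecewise geodesic with sides of length $\ge r$ meeting at angles $\ge\pi/2$, which is a quasigeodesic by Proposition~\ref{prop:TurningAngle}; Bowditch's theorem (resting on Anderson's construction) then bounds the Hausdorff distance from a quasiconvex set to its convex hull by some $L(\varkappa,n)$, and only with $r>2L$ do the hulls of distinct components stay disjoint. Relatedly, you assert but do not prove that $M\setminus V$ is nonempty; this is not automatic (the hulls could a priori cover $M$), and the paper needs a separate lemma, with a dimension-dependent angle $\a_n$ and the stronger hypothesis $2L<r\sin\a_n$, to exhibit a point outside $C$.

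Two smaller points. For (6) you appeal to relatively hyperbolic groups with ``quasi-isometrically rigid peripheral subgroups''; no such rigidity is available for the peripherals here, and it is not the relevant criterion. The paper uses Dru\c{t}u--Sapir's theorem, whose hypothesis (no peripheral subgroup is itself isomorphic to a non-elementary relatively hyperbolic group) is verified by a simplicial-volume argument: Proposition~\ref{prop: closed aspherical plus rel hyp} would force a boundary component to have positive simplicial volume, contradicting finiteness of $\norm{DM\setminus S}$ from Gromov's finiteness theorem (Lemma~\ref{lem: finte simpl vol}). Your sketch of (8) is in the right spirit, but the paper must additionally show that the iterated peripheral structure of a closed aspherical $3$-manifold group terminates (graph manifolds and flat pieces are not non-elementary relatively hyperbolic, again via simplicial volume) before the transfer results of Dru\c{t}u--Sapir, Osin, and Bell--Dranishnikov can be applied.
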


Most likely, (6) still holds
for noncompact $S$, but our proof does not apply.
In (5) we actually show that $K$ must be non-elementary
in the relatively hyperbolic group
structure given by (3)-(4). 
Part (8) hinges on various known results about 
$3$-manifold groups, appearing here
as peripheral subgroups of $\pi_1(N)$.

The manifold $N$ is constructed in a canonical way,
namely, shrinking along the rays orthogonal to $\d Q$
gives a diffeomorphism of $M\setminus S$ onto
$M\setminus (Q\cup S)$, and the latter is
the interior of a compact manifold $N$, obtained by removing
from $M$ the interior of a regular neighborhood 
of $Q\cup S$.

It is easy to construct examples to
which Theorem~\ref{thm-intro: const r, cusp} applies, e.g.
if $\pi_1(M)$ is residually finite, then any $\pi_1(M)$-invariant
locally finite normal hyperplane arrangement in the universal cover
can be thinned out by passing to a finite index subgroup 
and removing orbits of some 
hyperplanes/horoballs to ensure sparseness.

We do not know whether Theorem~~\textup{\ref{thm-intro: const r, cusp}} 
applies to any of ``natural'' examples such as
arrangements coming from Lorentzian lattices in
in~\cite{All-JDG}.
A basic difficulty is that the constant
$r(\varkappa, n)$ arising in the proof of Theorem~\ref{thm-intro: const r, cusp}
seems much larger than the sparseness constants for the `natural'' examples; we
hope to address this in future work.

To prove Theorem~\ref{thm-intro: const r, cusp}
we find $V$ as in Theorem~\ref{thm-intro: convex}
with $V\setminus (S\cup Q)$ homotopy equivalent to the boundary
of a regular neighborhood of $Q\cup S$ in $M$.
This hinges on the result of Bowditch~\cite{Bow-gf-pinch} that in a negatively pinched Hadamard manifold, any quasiconvex subset is within bounded distance to its convex hull; quasiconvexity follows from 
sparseness. This result in~\cite{Bow-gf-pinch} depends on
a delicate construction in~\cite{And}, which both contribute
to the size of $r(\varkappa, n)$.

A few properties of non-elementary relatively hyperbolic groups 
hold even if nothing is known about peripheral subgroups, e.g.
each subnormal subgroup of a non-elementary relatively hyperbolic group 
is non-elementary (as follows from a standard argument using its action 
on the ideal boundary). In particular, 
each subnormal subgroup of a non-elementary relatively hyperbolic group 
contains a non-abelian free
subgroup~\cite{Tuk} and 
has infinite dimensional second bounded cohomology~\cite{Fuj}.

In another direction it was proved in~\cite{Bel-rh-warp} that 
if a non-elementary relatively hyperbolic group is
isomorphic to a lattice in a virtually connected Lie group, 
then the lattice has real rank one. The latter cannot happen for
$\pi_1(M\setminus S)$, as long as $S$ is normal and $n>3$,
due to the following.

\begin{thm} \label{thm-intro: centralizer}
If $S$ is normal and $n>3$, then 
$\pi_1(M\setminus S)$ has a nontrivial element whose centralizer contains
a non-abelian free subgroup, and in particular,
$\pi_1(M\setminus S)$ is not 
isomorphic to a discrete isometry group of a Hadamard manifold
of pinched negative curvature.
\end{thm}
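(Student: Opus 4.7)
I aim to produce a nontrivial meridian $g$ around a hyperplane whose centralizer in $\pi_1(M\setminus S)$ contains a non-abelian free subgroup; the ``in particular'' clause then follows at once, since in a discrete isometry group of a pinched negatively curved Hadamard manifold the centralizer of any nontrivial element is either virtually cyclic (loxodromic case) or virtually nilpotent (parabolic case), and neither contains $F_2$.

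Pick any $h\in\mathcal D$; by assumption $S$ is non-empty, and $n>3$ gives $\dim h=n-2\geq 2$, so $h$ is itself a Hadamard manifold of pinched negative curvature. The normality condition provides a canonical complex structure on the normal bundle of $h$ in $\tilde M$, making it an oriented real rank-two bundle. Let $\bar h\subset M$ denote the image of $h$, and let $g\in\pi_1(M\setminus S)$ be a meridian loop around $\bar h$. A thin tubular neighborhood $T$ of $\bar h$ minus $\bar h$ itself deformation retracts onto an oriented circle bundle over $\bar h$, so its fundamental group $P_h$ fits in the central extension
\[
1\longrightarrow\langle g\rangle\longrightarrow P_h\longrightarrow \pi_1(\bar h)\longrightarrow 1.
\]

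Applying Theorem~\ref{thm-intro: convex} with $V$ equal to $T$ together with sufficiently small cusp neighborhoods of $M$, the $\pi_1$-injectivity of $T\setminus\bar h\hookrightarrow M\setminus S$ identifies $P_h$ with a peripheral subgroup of $\pi_1(M\setminus S)$; since $g$ is central in $P_h$, we obtain $P_h\subseteq C(g)$, where $C(g)$ denotes the centralizer of $g$. To finish, exhibit an $F_2$ inside $P_h$, which by centrality of the kernel reduces to finding $F_2$ inside $\pi_1(\bar h)=\mbox{Stab}(h)$. One chooses $h$ whose stabilizer acts cocompactly on $h$---automatic when $M$ is compact, and arranged in general by selecting $h$ among those hyperplanes in $\mathcal D$ whose image has compact closure in $M$; then $\mbox{Stab}(h)$ is a cocompact lattice in the isometry group of the Hadamard manifold $h$ of dimension $\geq 2$ and curvature $\leq -1$, hence non-elementary, hence contains $F_2$ by the Tits alternative.

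The main obstacle is establishing existence of $h\in\mathcal D$ with cocompact (or at least non-elementary) stabilizer: this is immediate when $M$ is compact, but in the cusped case one must use local finiteness of $\mathcal D$ together with normality near horoballs (orthogonal intersection with $\partial Q$) to rule out the degenerate situation where every hyperplane escapes entirely into a cusp and thereby acquires a small stabilizer. Once this existence step is in place, the rest of the argument is a direct calculation with the peripheral structure supplied by Theorem~\ref{thm-intro: convex}.
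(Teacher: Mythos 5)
Your overall strategy---a meridian around a hyperplane centralized by lifts of its stabilizer---is the same as the paper's in the case where the hyperplanes are pairwise disjoint, but the proposal has genuine gaps. The most serious one is the appeal to Theorem~\ref{thm-intro: convex} with $V$ a thin tube around a single hyperplane image $\bar h$ plus cusps: that theorem requires \emph{all} of $S$ to lie in $\mathrm{Int}(V)$ and $M\setminus V$ to be nonempty and precompact, and Theorem~\ref{thm-intro: centralizer} assumes only that $S$ is normal---the hyperplanes may intersect one another, may be noncompact, and may be arranged so that no such $V$ exists at all. So you cannot get $\pi_1$-injectivity of $T\setminus\bar h$ this way, and in fact the paper never uses relative hyperbolicity here: it works directly in the universal cover, noting that $\pi_1(Y\setminus D)$ is a subgroup of $\Gamma=\pi_1(M\setminus S)$ and producing, for each $g_i\in\mathrm{Stab}_G(h)$ preserving the free homotopy class of the meridian $\gamma$, a lift $\gamma_i\in\Gamma$ commuting with $\gamma$ (after conjugating by an element of $\pi_1(Y\setminus D)$). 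That lifting trick replaces your unjustified $\pi_1$-injectivity step. Relatedly, your circle-bundle picture collapses entirely when $h$ meets another hyperplane; the paper treats the intersecting case separately, using finite covolume of $\mathrm{Stab}_G(h)$ on $h$ to translate $h\cap h_1$ off itself and exhibiting $\mathbb Z\times(\mathbb Z\ast\mathbb Z)\cong\pi_1(Y\setminus\{h,h_1,h_2\})$ as a subgroup of $\Gamma$ via an explicit splitting of $\pi_1(Y\setminus D)\to\pi_1(Y\setminus\{h,h_1,h_2\})$.

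The step you flag as the ``main obstacle''---finding $h$ with non-elementary stabilizer---is indeed the remaining content, and it is not optional: the paper proves (Theorem~\ref{thm: centralizer}(a)) that \emph{every} $h\in\mathcal D$ has $h/\mathrm{Stab}_G(h)$ of finite volume, using compactness of the thick part of $M$ and local finiteness of $\mathcal D$ to show the truncated quotient $h^c/\mathrm{Stab}_G(h)$ is compact, plus cocompactness of the cusp stabilizers on $\d B\cap h$. Finite volume (not cocompactness, which you cannot in general arrange) already yields a non-abelian free subgroup of $\mathrm{Stab}_G(h)$ since $\dim h\ge 2$. Finally, a small point: the orientability of the normal bundle of $\bar h$ (needed for your central extension, and for the lifts to commute with $\gamma$ rather than conjugate it to $\gamma^{-1}$) is not automatic from normality; the paper handles this by passing to the index-two subgroup of $\mathrm{Stab}_G(h)$ acting trivially on $\pi_1(Y\setminus h)\cong\mathbb Z$. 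Your ``in particular'' clause is argued correctly and matches the paper.
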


Theorem~\ref{thm-intro: centralizer} fails if $n\le 3$ in which case
$M\setminus S$ is often hyperbolizable. For normal $S$ and $n>3$, it is likely that $\pi_1(M\setminus S)$ is never isomorphic to a lattice in a virtually connected Lie group; this was proved in~\cite{ACT-orth} in 
the case when $M, S$ are complex hyperbolic, and 
Theorem~\ref{thm-intro: centralizer} proves it in case 
$\pi_1(M\setminus S)$ is non-elementary relatively hyperbolic.

It is interesting that in some cases $M\setminus S$ admits
a complete finite volume metric of $\sec\in [-1,0)$, 
see~\cite{AbrSch, Bel-rh-warp, Bel-ch-warp}, yet
the existence of such metric does not (seem to)
have significant group theoretic implications.

To prove finiteness of $\mathrm{Out}(\pi_1(M\setminus S))$ 
in Theorem~\ref{thm-intro: const r, cusp} \!(6) we
use another property of relatively hyperbolic groups that
assumes nothing about peripheral subgroups, and which is
an application of recent work of
Mineyev-Yaman~\cite{MinYam}:

\begin{prop}\label{prop: closed aspherical plus rel hyp}
Let $L$ be a closed aspherical manifold such that 
$\pi_1(L)$ is non-elementary relatively hyperbolic, 
then the simplicial volume $\norm{L}$ is positive.
\end{prop}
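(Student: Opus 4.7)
The plan is to invoke Gromov's duality principle for the $\ell^1$-seminorm: for a closed oriented $n$-manifold $L$, the simplicial volume $\norm{L}$ is positive if and only if the Kronecker dual $[L]^\vee \in H^n(L;\mathbb R)$ of the fundamental class lies in the image of the comparison map $c\colon H_b^n(L;\mathbb R)\to H^n(L;\mathbb R)$. Thus the task reduces to showing that $[L]^\vee$ admits a bounded cocycle representative.

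Since $L$ is aspherical, it is a $K(G,1)$ for $G=\pi_1(L)$, so $H^n(L;\mathbb R)\cong H^n(G;\mathbb R)$; by Gromov's mapping theorem in its bounded-cohomological form (in Ivanov's formulation), the analogous isomorphism $H_b^n(L;\mathbb R)\cong H_b^n(G;\mathbb R)$ holds, and the two comparison maps agree under these identifications. So it suffices to prove that the group-theoretic comparison map $H_b^n(G;\mathbb R)\to H^n(G;\mathbb R)$ is surjective in degree $n$. Note that $n\ge 2$ automatically, because closed aspherical manifolds of dimension $\le 1$ have fundamental group trivial or $\mathbb Z$, both of which are elementary in any relatively hyperbolic structure.

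The needed surjectivity is supplied by the theorem of Mineyev--Yaman~\cite{MinYam}, which extends Mineyev's earlier surjectivity theorem for word hyperbolic groups and asserts that for a non-elementary relatively hyperbolic group $G$ the comparison map $H^k_b(G;\mathbb R)\to H^k(G;\mathbb R)$ is surjective in every degree $k\ge 2$. Applying this at $k=n$ produces a bounded cocycle representing $[L]^\vee$, which forces $\norm{L}>0$ by the duality stated in the first paragraph.

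The only substantive ingredient is the appeal to Mineyev--Yaman; all other steps are classical bookkeeping for bounded cohomology of aspherical spaces. The one subtlety one should watch is the precise form in which Mineyev--Yaman state their conclusion: if it is phrased relatively, as surjectivity of $H_b^n(G,\mathcal P;\mathbb R)\to H^n(G,\mathcal P;\mathbb R)$ rather than absolutely, then one transfers surjectivity to the absolute setting via the long exact sequences of the pair $(G,\mathcal P)$, using that no constraint is being imposed on the boundary since $L$ is closed, i.e.\ there is no peripheral part of the fundamental class to worry about.
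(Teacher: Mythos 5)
Your argument is the cohomological dual of the paper's proof, with the same single substantive ingredient. The paper works directly in homology: it maps $[L]$ into the relative group $H_n(G,\mathcal P)$, shows the image is nonzero because $H_n$ of each peripheral subgroup vanishes (peripherals have infinite index, so their classifying spaces are homotopy equivalent to open $n$-manifolds), and then applies Mineyev--Yaman's theorem that nonzero relative classes in degree $\ge 2$ have positive simplicial norm, together with monotonicity of the norm under $H_n(X)\to H_n(X,A)$. Your duality formulation is equivalent in principle, so the route is parallel rather than genuinely different.

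There is, however, one step that is false as you state it, and your hedge does not quite repair it. The Mineyev--Yaman theorem concerns the \emph{relative} comparison map $H^k_b(G,\mathcal P;\mathbb R)\to H^k(G,\mathcal P;\mathbb R)$; the absolute comparison map $H^k_b(G;\mathbb R)\to H^k(G;\mathbb R)$ is \emph{not} surjective for non-elementary relatively hyperbolic groups in general. For instance, if some peripheral $P_i$ is isomorphic to $\mathbb Z^2$, then $H^2(P_i;\mathbb R)\ne 0$ while $H^2_b(P_i;\mathbb R)=0$ by amenability, so any class in $H^2(G;\mathbb R)$ restricting nontrivially to $P_i$ has no bounded representative. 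You must therefore run the relative-to-absolute transfer, and the point to verify is that $[L]^\vee$ restricts to zero in each $H^n(P_i;\mathbb R)$, so that it lifts to $H^n(G,\mathcal P;\mathbb R)$ where Mineyev--Yaman applies. This does not follow from ``$L$ is closed, so there is no peripheral part of the fundamental class'': the peripheral subgroups are present as subgroups of $\pi_1(L)$ regardless of whether $L$ has boundary. The correct reason is that peripheral subgroups of a non-elementary relatively hyperbolic group have infinite index, so the cover of $L$ corresponding to $P_i$ is an open $n$-manifold, whence $H_n(P_i;\mathbb Z)=0$ and $H^n(P_i;\mathbb R)\cong\operatorname{Hom}(H_n(P_i;\mathbb Z),\mathbb R)=0$. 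This is precisely where non-elementarity enters, and it is the cohomological mirror of the paper's computation $H_n(A_i)=0$. You should also reduce to the orientable case at the outset (relative hyperbolicity passes to finite-index subgroups), since the duality principle as you invoke it requires an orientation.
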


The proof of  Theorem~\ref{thm-intro: convex} is inspired by
arguments of Bowditch~\cite{Bow-rel}, 
where it is implicit that if $M$ contains 
a closed, locally convex subset $V$ such that $M\setminus V$ 
is nonempty and precompact in $M$, then $\pi_1(M)$ is hyperbolic 
relative to the fundamental groups of components of $V$
(see Remark~\ref{rmk: bowditch's result}). 
Combining this statement with a result of Mineyev-Yaman~\cite{MinYam}
gives the following.

\begin{thm} \label{thm-intro: simpl vol} 
Suppose that $M$ contains a closed, locally convex
subset $V$ such that $M\setminus V$ is nonempty 
and precompact in $M$. If $S\subset\mathrm{Int}(V)$, 
then the simplicial volume $\norm{M\setminus S}$ is nonzero.
\end{thm}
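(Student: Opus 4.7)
The plan is to combine the implicit result of Bowditch---that $\pi_1(M)$ is non-elementary hyperbolic relative to the family $\{\pi_1(V_i)\}$, where the $V_i$ are the components of $V$---with the Mineyev--Yaman surjectivity theorem for relative bounded cohomology, and finally to descend from positivity of the relative simplicial volume $\norm{M,V}$ to positivity of $\norm{M\setminus S}$ by excision.

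First, I would invoke the Bowditch-style argument referenced in Remark~\ref{rmk: bowditch's result}: the hypotheses that $V$ is closed and locally convex and that $M\setminus V$ is nonempty and precompact are exactly what is required to conclude that $(\pi_1(M),\{\pi_1(V_i)\})$ is a non-elementary relatively hyperbolic pair. Geometrically, each $V_i$ lifts to a disjoint $\pi_1(M)$-equivariant family of convex subsets in the pinched Hadamard universal cover $\widetilde M$, and precompactness of $M\setminus V$ supplies the cocompactness of the action off the peripheral pieces.

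Second, I would observe that $M$ is aspherical (being negatively curved and complete) and that each $V_i$ is aspherical with $\pi_1$-injective inclusion (its components lift to contractible convex subsets of $\widetilde M$), so $(M, V)$ serves as a classifying pair for $(\pi_1(M),\{\pi_1(V_i)\})$. The precompactness of $M\setminus V$ gives the pair a relative fundamental class $[M,V] \in H_n(M,V;\mathbb R)$. By the Mineyev--Yaman theorem~\cite{MinYam}, the comparison map
$$H^n_b\bigl(\pi_1(M);\{\pi_1(V_i)\}\bigr) \to H^n\bigl(\pi_1(M);\{\pi_1(V_i)\}\bigr)$$
is surjective for $n\ge 2$, so the cocycle dual to $[M,V]$ admits a bounded representative. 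Gromov's duality between bounded cohomology and the $\ell^1$-seminorm on homology then yields $\norm{M,V} > 0$.

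Third, since $S$ is closed in $M$ and contained in $\mathrm{Int}(V)$, Gromov's excision for the $\ell^1$-seminorm gives $\norm{M,V} = \norm{M\setminus S,\,V\setminus S}$. The fundamental class of the open manifold $M\setminus S$, taken in locally finite homology, maps to the relative class of $(M\setminus S, V\setminus S)$ under the norm-nonincreasing relativization map, so
$$\norm{M\setminus S} \;\ge\; \norm{M\setminus S,\,V\setminus S} \;=\; \norm{M,V} \;>\; 0,$$
as required. The main technical hurdle will be the second step: matching the algebraic bounded cohomology class produced by Mineyev--Yaman with the topological relative fundamental class of the possibly noncompact pair $(M,V)$, and in particular ensuring that the asphericity and $\pi_1$-injectivity of the $V_i$ transport the Mineyev--Yaman output from group cohomology to manifold cohomology. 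Steps one and three are comparatively routine once this dictionary is in place.
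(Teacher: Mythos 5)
Your argument is correct in outline, but it routes the Mineyev--Yaman input through a different pair than the paper does. The paper passes at once to compact models: it takes nested regular neighborhoods of $S\cup Q$ inside $\mathrm{Int}(V)$, sets $N=M\setminus \mathrm{Int}(T_1)$ and $U=V\setminus\mathrm{Int}(T_2)$, and applies Proposition~\ref{prop: excision} --- which packages exactly your ``nonzero class in $H_n$ of a classifying pair has positive norm'' step, the nonvanishing being seen by excising down to the compact piece $L=N\setminus\mathrm{Int}(U)$ where $H_n(L,\d L)\cong\mathbb Z$ --- to get $\norm{N,\d N}>0$, and then finishes with $\norm{M\setminus S}=\norm{\mathrm{Int}(N)}\ge\norm{N,\d N}$. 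In that version the relatively hyperbolic structure fed into Mineyev--Yaman is the one on $\pi_1(N)\cong\pi_1(M\setminus S)$ relative to the components of $V\setminus S$, i.e.\ the structure of Theorem~\ref{thm-intro: convex}, whereas you use the structure on $\pi_1(M)$ relative to $\{\pi_1(V_i)\}$ from Remark~\ref{rmk: bowditch's result} and only afterwards transfer to $M\setminus S$; your route has the virtue of requiring neither asphericity of $M\setminus S$ nor any normality of $S$, and it is the route the introduction advertises. Two precision points in your third step. First, excision only hands you the inequality $\norm{M\setminus S,\,V\setminus S}\ge\norm{M,V}$ for free, since the inclusion of pairs goes from $(M\setminus S,V\setminus S)$ into $(M,V)$ and is norm-nonincreasing; the reverse inequality is not automatic, but the stated inequality is all you need. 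Second, the relativization $\norm{M\setminus S}\ge\norm{M\setminus S,\,V\setminus S}$ does hold but deserves a line of justification using precompactness of $M\setminus V$: in a locally finite fundamental cycle only finitely many simplices meet the compact set $\overline{M\setminus V}$, the remaining simplices lie in $V\setminus S$, and discarding them leaves a finite relative cycle of no larger $\ell^1$-norm representing the relative fundamental class. With those two adjustments your proof is complete.
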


Simplicial volume is proper homotopy invariant 
of a manifold $U$ with values in $[0,\infty]$ 
introduced in Gromov's seminal work~\cite{Gro-vol}.
Nonvanishing of $\norm{U}$ has various geometric and topological 
consequences, e.g.
\begin{itemize}
\item
Any complete Riemannian metric on $U$
with $\Ric(U)\ge -(n-1)$ has the lower volume bound
$\norm{U}\le c_n \mathrm{Vol}(U)$~\cite[page 12]{Gro-vol}. 
\item
If $U$ is the interior of a compact manifold,
then $\norm{U}>0$ implies that $U$ admits 
no proper self-maps of degree $>1$~\cite[page 8]{Gro-vol}.
\end{itemize}
These two facts hold even if $\norm{U}$ is infinite. 
It follows from~\cite[page 59]{Gro-vol} that
$\norm{M\setminus S}$ is finite provided $M$ is compact
and $S$ is normal; this result is used in the proof of
Theorem~\ref{thm-intro: const r, cusp}\ \!(6).
By contrast, if $S$ is noncompact and 
$n=3$, then $\norm{M\setminus S}$ is always infinite, because
$M\setminus S$ is the interior of a compact manifold whose boundary
has nonzero simplicial volume. 

The structure of the paper is as follows.
In Section~\ref{sec: rel hyp} we give a general criterion
for relative hyperbolicity of a group acting on a CAT$(-1)$ space.
Section~\ref{sec: notations} is a list of
notations and standing assumptions.
In Section~\ref{sec: prelim} we establish various preliminary results
culminating in Theorem~\ref{thm: rel hyp in Section 4}, 
which is an orbifold version of
Theorem~\ref{thm-intro: convex}. Note that
Corollary~\ref{cor-intro: disjoint} is immediate from
Theorems~\ref{thm-intro: convex} and the structure of finite volume
manifolds of pinched negatively curvature.
An orbifold version of Theorem~\ref{thm-intro: const r, cusp} 
is proved in Section~\ref{sec: sparse}, while 
Section~\ref{sec: applications} contains proofs of
Corollary~\ref{cor-intro: disjoint appl},
Theorem~\ref{thm-intro: centralizer},
Proposition~\ref{prop: closed aspherical plus rel hyp},  and 
Theorem~\ref{thm-intro: simpl vol}.
In Appendix we collect some facts on $CAT(-1)$ spaces
that we could not find in the literature.

\section{Acknowledgments}
Belegradek is grateful to Ian Agol, Daniel Allcock, Greg Kuperberg, 
and Henry Wilton for helpful communications.

\section{A criterion for proving relative hyperbolicity}
\label{sec: rel hyp}

In~\cite{Bow-rel} Bowditch showed that a group
is relatively hyperbolic if and only if it 
acts on fine, connected, hyperbolic 
graph with finite quotient and finite
edge stabilizers. Here a graph is given the path-metric
in which each edge is isometric to the unit interval,
and if the path-metric is Gromov hyperbolic,
the graph is called {\it hyperbolic}. A graph is called
{\it fine\,} if for each $n$ each edge lies in only finitely many
circuits of length $<n$, where a {\it circuit} is
an embedded closed path. A family of subsets in a metric space
is called {\it $r$-separated} if the distance between any two subsets 
in the family is $\ge r$ where $r\in\mathbb R$. We prove:

\begin{thm}\label{thm: basic}
Let $X$ be a complete CAT$(-1)$ space, and let $\e>0$. Suppose
that there exists a subgroup $H\le\Iso(X)$ and a $H$-invariant
family $\mathcal A=\{ A_i\}$ 
of {$\e$-separated} closed convex subsets of $X$
such that $X\setminus\cup_i\mathrm{Int}(A_i)$ is locally 
compact, and $H$-action on 
$X\setminus\cup_i\mathrm{Int}(A_i)$ is
properly discontinuous and cocompact.
If $H_i$ is the stabilizer of $A_i$ in $H$, then
$H$ is hyperbolic relative to $\{H_i\}$.
\end{thm}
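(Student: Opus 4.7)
The plan is to apply Bowditch's criterion recalled just before the theorem by constructing a graph $\Gamma$ on which $H$ acts so that $\Gamma$ is connected, hyperbolic, and fine, and so that $H$ acts with finite edge stabilizers and finitely many orbits of edges. The vertices associated to the $A_i$ will have stabilizer $H_i$, while the remaining vertex stabilizers are finite; this yields that $H$ is hyperbolic relative to $\{H_i\}$.

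Set $Y := X \setminus \bigcup_i \Int(A_i)$, fix a basepoint $x_0 \in Y$, let $V_0 = H \cdot x_0$, and take the vertex set of $\Gamma$ to be $V_0 \sqcup \mathcal{A}$. Choose a constant $R$ large enough that the $R$-balls about points of $V_0$ cover $Y$, which is possible by cocompactness. Join $x,y \in V_0$ by an edge when $d_X(x,y) \le R$, and join $x \in V_0$ to $A \in \mathcal{A}$ when $\mathrm{dist}_X(x, A) \le R$. The $H$-action preserves this edge relation. Connectedness of $\Gamma$ follows immediately from the cocompact action on $Y$: any two points of $V_0$ are joined by a chain of $V_0$-vertices at consecutive $X$-distance $\le R$, and every $A \in \mathcal{A}$ is reached by such a chain since $\partial A \subset Y$.

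For hyperbolicity, $\Gamma$ is equivariantly quasi-isometric to the pseudo-metric space $\widehat X$ obtained from $X$ by collapsing each $A_i$ to a point. Since $X$ is CAT$(-1)$ and the $A_i$ are pairwise disjoint closed convex subsets, nearest-point projections onto each $A_i$ are well-defined and exponentially contracting, so any geodesic in $\widehat X$ pulls back to a concatenation of geodesic arcs in $X$ entering and leaving the $A_i$ at uniformly bounded angles. A standard CAT$(-1)$ argument, facilitated by the lemmas collected in the paper's appendix, then yields Gromov hyperbolicity of $\widehat X$, hence of $\Gamma$. Finite edge stabilizers and finitely many edge orbits both follow from proper discontinuity and cocompactness of $H$ on $Y$: every edge is witnessed by a segment of $X$-length $\le R$ whose endpoints lie in $V_0 \cup \mathcal{A}$, and its stabilizer fixes these endpoints, which forces the desired finiteness. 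By construction, the stabilizer of $A \in \mathcal{A}$ is exactly $H_A$, while $V_0$-vertex stabilizers are finite by proper discontinuity.

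The main obstacle will be fineness: for every edge $e$ of $\Gamma$ and every $n$, only finitely many embedded circuits of length $\le n$ may contain $e$. This is precisely where $\e$-separation of $\mathcal{A}$ and local compactness of $Y$ are essential. Any such circuit through $e$ consists of at most $n$ further vertices, each within $X$-distance $nR$ of a fixed lift of $e$. Local compactness of $Y$ bounds the number of $V_0$-vertices in this bounded region, since a discrete orbit meets any compact subset of $Y$ in finitely many points. The $\e$-separation of $\mathcal{A}$ bounds the number of sets $A_j$ passing within $R$ of this region, because each $A_j$ has a nonempty $\e/3$-collar inside $Y$, these collars are pairwise disjoint, and each one lies inside a corresponding compact subset of $Y$. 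Combining the two bounds leaves only finitely many possible circuits of length $\le n$ through $e$, verifying fineness and completing the proof.
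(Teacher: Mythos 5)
Your graph is a reasonable variant (orbit vertices plus cone vertices for the $A_i$, in the spirit of a coned-off Cayley graph, rather than the paper's refined nerve of $\mathcal A$), and the claims about connectedness, edge stabilizers, and vertex stabilizers are essentially fine. But the fineness argument --- which you correctly identify as the main obstacle --- rests on a false premise. You assert that every vertex of a length-$n$ circuit through $e$ lies within $X$-distance $nR$ of a lift of $e$. This fails precisely at the $\mathcal A$-vertices: an edge from $x\in V_0$ to $A\in\mathcal A$ only records $\mathrm{dist}_X(x,A)\le R$, and $A$ may be unbounded (in the intended application the $A_i$ are completions of preimages of convex sets containing horoballs and hyperplanes). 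So in a subpath $x - A - y$ of a circuit, both $x$ and $y$ are within $R$ of $A$, but $d_X(x,y)$ can be arbitrarily large, and nothing confines the circuit to a bounded region. Without that confinement the counting collapses, and indeed one can expect infinitely many short circuits through a fixed edge $\{x,A\}$ unless something rules them out. The missing idea is exactly what the paper supplies: realize the circuit as a loop in $X$ made of arcs inside the $A_i$'s and short connecting segments, and use $\e$-separation to get \emph{bounded penetration} (Lemma~\ref{lem: bounded penetration}), hence bounded backtracking, hence (via Bowditch's Proposition 5.7 or Corollary 7.2) a bound on the total length of the loop in terms of $n$. Only after that does a local-finiteness count become available. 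The paper must also pass to a subgraph $\Gamma_u'$ of the nerve (discarding edges whose realizing segment crosses another $A_k$) to make fineness work; your construction would need an analogous pruning or an analogous length bound.

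Two secondary problems compound this. First, your count of nearby $V_0$-vertices and nearby $A_j$'s implicitly treats the set $\{y\in Y: d_X(y,e)\le nR\}$ as precompact; but $X$ is not assumed proper (the paper stresses that it applies the theorem to non-proper $X$), and an $X$-metric ball intersected with $Y$ need not be compact. The paper avoids this by equipping $Y=X\setminus\bigcup_i\mathrm{Int}(A_i)$ with its induced \emph{path} metric, which is proper by Hopf--Rinow, and by producing a loop of bounded length in that metric; the same care is needed to justify your claims that edge orbits are finite. Second, the hyperbolicity step ("a standard CAT$(-1)$ argument... yields Gromov hyperbolicity of $\widehat X$") is asserted rather than proved; the appendix lemmas you invoke (unique shortest segments, bounded penetration) are relevant inputs, but the actual hyperbolicity of the electrified space is a nontrivial statement, which the paper obtains by citing~\cite[Proposition 7.12]{Bow-rel} for the nerve $\Gamma_u$. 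As written, the proposal has a genuine gap at fineness and a serious sketch at hyperbolicity.
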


\begin{rmk} 
In this paper we apply Theorem~\ref{thm: basic}
to $X$ that is not proper. If $X$ is proper,
Theorem~\ref{thm: basic} is essentially due to Bowditch,
e.g. it follows from~\cite[7.12--7.13]{Bow-rel} and 
(elementary) Lemma~\ref{lem: bounded penetration} below.
\end{rmk}

\begin{proof} By Lemma~\ref{lem: unique segment} below
for every distinct $A_i, A_j$ there is a unique
segment $[a_{ij}, a_{ji}]$
that realizes the distance between $A_i, A_j$ where $a_{kl}\in A_k$.

Given $u>0$, let $\Gamma_u$ be the $u$-nerve of $\{A_k\}$, i.e.
the graph with vertex set $\mathcal A$
and $A_i, A_j$ are joined by an edge if and only if 
$d(a_{ij},a_{ji})\le u$. 
Since $H$ acts cocompactly on 
$X\setminus\cup_i\mathrm{Int}(A_i)$, 
the graph $\Gamma_u$ is connected, provided $u$
is large enough. Fix such $u$ and give $\Gamma_u$ a path-metric 
with edges of length $1$.
By~\cite[Proposition 7.12]{Bow-rel} $\Gamma_u$ is a hyperbolic
metric space in this path-metric.
It is easy to see that $\G_u$ need not be fine. 
Below we replace $\G_u$ by a subgraph that is fine.

Let $\Gamma_u^\prime$ be the subgraph of $\Gamma_u$
with the same vertices in which 
$A_i, A_j$ are joined by an edge if and only if 
$[a_{ij}, a_{ji}]\cap A_k=\emptyset$ for each $k\notin\{i,j\}$.
The graph $\Gamma_u^\prime$ is $H$-invariant, and 
since $H$ acts properly discontinuously and cocompactly on 
$X\setminus\cup_i\mathrm{Int}(A_i)$, the quotient graph
$\Gamma_u/H$ is finite, and hence so is $\Gamma_u^\prime/H$.
Moreover, uniqueness of $[a_{ij}, a_{ji}]$
implies that edge stabilizers are finite. 

\begin{lem}
If two vertices of $\Gamma_u$ are joined by
an edge, then they are joined by a path of length $\le \frac{u}{\e}$
that lies in $\Gamma_u^\prime$. 
\end{lem}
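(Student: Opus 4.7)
The plan is to iteratively split the edge $A_i A_j$ at any element of $\mathcal A$ that obstructs it from lying in $\G_u^\prime$, organizing the splits as a finite binary tree whose leaves assemble into the desired path. Concretely, build a tree rooted at the pair $(A_i, A_j)$: at each node $(A_a, A_b)$, consult the unique distance-realizing segment $[a_{ab}, a_{ba}]$ given by Lemma~\ref{lem: unique segment}. If it meets no $A_c$ with $c \notin \{a,b\}$, then by definition $(A_a, A_b)$ is an edge of $\G_u^\prime$ and the node is a leaf. Otherwise, choose any such $A_c$ and declare the children of the node to be $(A_a, A_c)$ and $(A_c, A_b)$. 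Reading the leaves left-to-right produces a sequence whose consecutive pairs share the splitting vertex at their least common ancestor, so the leaves concatenate to a path in $\G_u^\prime$ from $A_i$ to $A_j$.

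The main technical step is the distance inequality $d(A_a, A_c) + d(A_c, A_b) \le d(A_a, A_b)$ at every internal node. Since $A_c$ is closed and convex and CAT$(-1)$ geodesics are unique, the intersection $A_c \cap [a_{ab}, a_{ba}]$ is a sub-segment $[p_c, q_c]$, possibly degenerate. Additivity along $[a_{ab}, a_{ba}]$ gives
\[
d(A_a, A_b) \;=\; d(a_{ab}, p_c) + d(p_c, q_c) + d(q_c, a_{ba}),
\]
and since $a_{ab} \in A_a$, $p_c, q_c \in A_c$, $a_{ba} \in A_b$, one has $d(A_a, A_c) \le d(a_{ab}, p_c)$ and $d(A_c, A_b) \le d(q_c, a_{ba})$. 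Adding these yields the desired inequality. Moreover, $\e$-separation of $\mathcal A$ together with $A_c \ne A_b$ forces $d(q_c, a_{ba}) \ge d(A_c, A_b) \ge \e$, whence $d(A_a, A_c) \le d(A_a, A_b) - \e$, and symmetrically $d(A_c, A_b) \le d(A_a, A_b) - \e$.

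This strict decrease shows the tree has depth at most $u/\e$ and is therefore finite, so the recursion terminates with every leaf a genuine edge of $\G_u^\prime$. Summing the internal-node inequality over all internal nodes then yields, by induction on the tree, that the total of the leaf distances is at most the root distance $d(A_i, A_j) \le u$. Since every leaf corresponds to a pair of distinct elements of the $\e$-separated family $\mathcal A$, each leaf distance is at least $\e$; therefore the number of leaves, and hence the length of the constructed path in $\G_u^\prime$, is at most $u/\e$.

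The only subtlety worth flagging is well-definedness of the distance-realizing segment at each internal node, which is ensured by Lemma~\ref{lem: unique segment} applied to the disjoint closed convex sets $A_a, A_b$ in the complete CAT$(-1)$ space $X$. Apart from this, the proof uses only $\e$-separation, convexity, and uniqueness of CAT$(-1)$ geodesics, with no further geometric input.
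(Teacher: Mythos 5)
Your proposal is correct and follows essentially the same route as the paper: split the offending edge at any $A_m$ met by the distance-realizing segment, use the subadditivity $\mathrm{dist}(A_a,A_b)\ge\mathrm{dist}(A_a,A_c)+\mathrm{dist}(A_c,A_b)$ together with $\e$-separation to get a strict decrease of $\e$ at each step, and conclude termination with total length $\le u$ and hence at most $u/\e$ edges. Your write-up merely makes explicit (via the binary tree and the subsegment $[p_c,q_c]$) details the paper leaves implicit.
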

\begin{proof}
Indeed, suppose $A_i, A_j$ are joined by an edge of $\Gamma_u$,
so that $[a_{ij}, a_{ji}]$ has length $\le u$. If $[a_{ij}, a_{ji}]$
passes through some $A_m$, then 
$\mathrm{dist}(A_i, A_j)\ge \mathrm{dist}(A_i, A_m)+
\mathrm{dist}(A_m, A_j)$.
Since $A_k$'s are $\e$-separated,
$\mathrm{dist}(A_i, A_m)$ and $\mathrm{dist}(A_m, A_j)$ are $\le u-\e$.
Repeating the process for each pair, we note that after each step
they become closer by $\e$, so the procedure terminates
at a finite sequence $A_i, \dots, A_j$
such that the sum of lengths between adjacent sets is $\le u$.
Hence $A_i$, $A_j$ are joined by a path in $\Gamma_u^\prime$
of length  $\le\frac{u}{\e}$.
\end{proof}

Thus $\Gamma_u^\prime$ is connected, and in the path-metric
induced from $\Gamma_u$ the graph
$\Gamma_u^\prime$ is quasi-isometric to $\Gamma_u$,
in particular, $\Gamma_u^\prime$ is hyperbolic.

Towards proving that $\Gamma_u^\prime$ is fine, recall that
a family of subsets $\{Q_i\}$ in a metric space is said to have 
{\it bounded penetration\,} if there is a function $D(\r)$
such that for each $k\neq j$ the intersection of the
$\r$-neighborhoods of $Q_k$, $Q_j$ has diameter $\le D(\r)$.
By Lemma~\ref{lem: bounded penetration} since the family 
$\{ A_k\}$ is $\e$-separated, it has bounded penetration.

Consider an arbitrary circuit in $\Gamma_u^\prime$
of length $n$. 
Represent it by a piecewise geodesic loop $\g$ in
$X$ written as 
$\alpha_1\cup\beta_1\cup\dots\cup\alpha_n\cup \beta_n$
where $\alpha_i$ is a geodesic segment in $A_i$ with endpoints in $\d A_i$
and $\beta_i$ is the segment $[a_i, a_{i+1}]$
joining $\d A_i$ to $\d A_{i+1}$.
Since $\{ A_k\}$ has bounded penetration, the length of $\g$
is bounded above by a linear function of $n$ as proved by Bowditch 
in comment right before Lemma 7.13 in~\cite{Bow-rel}.
(For completeness we outline his argument. That $\mathcal A$
has bounded penetration implies that
$\g$ has bounded backtracking, i.e. any two geodesic segments 
that form $\g$ only travel together for a bounded amount of time.
This is trivially true for $\b_i$'s as they have length $\le u$,
and for $\a_i$'s this follows from bounded penetration. Now
a linear bound on the length of $\g$
can be obtained from~\cite[Proposition 5.7]{Bow-rel},
which is essentially Proposition 7.3.4 of~\cite{Bow-hyperb-notes}
whose proof is fairly long.
For purposes of this paper a linear bound is not important, indeed
any bound would do, and
an easier argument in~\cite[Corollary 7.2]{Bow-rel} gives a 
quadratic bound, i.e. the length of $\g$ is 
bounded above by a quadratic function of $n$.)

Now we can finish the proof that $\Gamma_u^\prime$ 
is fine. The subpath of $\g$ given as
$\g_1:=\beta_1\cup \alpha_2\cup\dots\cup \beta_n$ lies outside 
$A_1$, so the the orthogonal projection $X\to A_1$ 
maps $\g_1$ to a path $\bar\g_1$ 
in $\d A_1$ joining the endpoints of $\a_1$
(where the boundary of a subset $Z$ is defined by 
$\d Z:=\bar{Z}-\mathrm{Int}(Z)$).
Since the projection is distance-nonincreasing, the distance
between the endpoints of
$\alpha_1$ in the path-metric on $\d A_1$ induced from $X$
is bounded by the same linear function of $n$.
Applying this argument to each $\alpha_i$, we
see that $\alpha_i$ can be replaced by 
a path in $\d A_i$ with the same endpoints, giving the new loop
$\g^\prime:=\bar\g_1\cup\beta_1\cup\dots \bar\g_n\cup\beta_n$ 
in $X\setminus\cup_i\mathrm{Int}(A_i)$
whose length is bounded by a quadratic function of $n$.
The space $X\setminus\cup_i\mathrm{Int}(A_i)$, equipped with
the path metric induced from $X$, is proper by Hopf-Rinow
because it is locally compact and complete.
In this metric the family $\{\d A_k\}$ is is locally finite
because it is $\e$-separated, and hence
only finitely many $\d A_k$'s can be visited by $\g^\prime$
as above provided it contains $\beta_1$.
Thus the number of circuits of length $n$ in $\Gamma_u^\prime$
that contain a given edge is finite, so $\Gamma_u^\prime$ 
is fine. This completes the proof of Theorem~\ref{thm: basic}.
\end{proof}


\begin{rmk}
The proof of Theorem~\ref{thm: basic}
goes through with minor modifications when $X$ is
$\delta$-hyperbolic provided $A_i$'s are $r$-separated
with $r\gg \delta$.
\end{rmk}

\section{Notations and standing assumptions}
\label{sec: notations}

In the introduction we focused on manifolds, yet all ``natural'' 
examples we know are orbifolds, so we work equivariantly in the 
universal cover and allow lattices with torsion; this necessitates a
slight change in notations. Similarly, it would be easier
to deal with compact orbifolds but many ``natural'' examples
are non-compact.

Let $Y$ be a complete simply-connected Riemannian manifold
of sectional curvatures within $[\varkappa,-1]$, for some constant 
$\varkappa\le -1$,
and let $G$ be discrete isometry group of $Y$ such that
the orbifold $Y/G$ has finite volume. 

Let $\mathcal D$ be  
a locally finite $G$-invariant family of hyperplanes in $Y$
(recall that hyperplanes are complete totally geodesic submanifolds
of codimension two). 
Let $D$ denote the union of the hyperplanes in $\mathcal D$.

Suppose that $Y$ contains a closed, $G$-invariant, 
locally convex subset $C$ with non-empty $C^1$-smooth
boundary $\d C$ such that $D\subset\mathrm{Int}(C)$, and 
$G$ acts cocompactly on $Y\setminus \mathrm{Int}(C)$. 
Thus $\d C/G$ is a compact, and
therefore, $\d C$ has a positive normal injectivity radius.

\begin{rmk} 
\label{rmk: c1 boundary} 
All the assumptions on $C$ are crucial except for
``$\d C$ is $C^1$'' which
simplifies some matters, and causes no loss of generality. 
Indeed, if $C$ is as in the previous paragraph except 
$\d C$ is not $C^1$, then $C$ is a
(codimension zero) topological submanifold with possibly
non-smooth boundary~\cite[Theorem 1.6]{CheGro}.
Now according to~\cite[Theorem 1.2]{Sha} 
or~\cite[Lemma 5]{Poo}
the distance function to $C$ is $C^1$ 
near $\d C$. Since $C$ is $G$-invariant and 
$G$ acts cocompactly on $\d C$,
any sufficiently small $\e$-neighborhood of $C$ has 
$C^1$-boundary, and since the curvature is nonpositive,
the $\e$-neighborhood of $C$ is locally convex,
so by replacing $C$ with its  $\e$-neighborhood 
we may assume its boundary is $C^1$.
\end{rmk} 

The orbifold $Y/G$ is the union of a compact set, and finitely many 
{\it cusps} whose preimage in $Y$ is
a $G$-invariant family $\mathcal H$ of disjoint closed 
horoballs~\cite[Proposition 6.6]{Bow-gf-pinch}.

Suppose that every horoball in $\mathcal H$
is either disjoint from $D$, or intersects $D$ orthogonally.
(This can be always arranged by choosing horoballs in $\mathcal H$ sufficiently small, for otherwise  
since horoballs fall into finitely many $G$-equivalence types, 
there is a sequence of concentric horoballs $B_i$ 
that Hausdorff converges to their common center $z$, and a sequence of
hyperplanes $h_i$ such that the intersection $h_i\cap \d B_i$ is not orthogonal. Acting by $\mathrm{Stab}_G(z)$, 
we may assume each $h_i$ intersects a compact
fundamental domain for the 
$\mathrm{Stab}_G(z)$-action on $\d B_0$. 
By local finiteness of $\{h_i\}$, we can find $h_{i_0}$
that intersects each $B_i$, hence $z$ lies at infinity of $h_{i_0}$
implying that $h_{i_0}$ is orthogonal to the boundary of
any horoball concentric to $B_0$, which contradicts the assumption.)

Let $Y_0:=Y\setminus D$, and let $p_0\co X_0\to Y_0$ 
be the universal Riemannian covering. 
Let $X$ be the metric completion of $X_0$. 
Since $p_0$ is a local isometry, it is distance non-increasing,
so it maps Cauchy sequences to Cauchy sequences, and hence
extends to a continuous map of metric completions 
$p\co X\to Y$, which is also distance non-increasing.
Let $\D:=X\setminus X_0$.

It is proved in~\cite{All-JDG} that if $\mathcal D$ is normal,
then $X$ is CAT$(-1)$, and the inclusion $X_0\to X$ is a weak 
homotopy equivalence, in particular, $X_0$ is contractible. 

Let $\G$ be the group of all $p_0$-lifts of elements of $G$.
There
is a surjection  $\G\to G$ whose kernel is the group of automorphisms
of $p_0$. Since $G$ acts isometrically on $Y_0$, 
the group $\G$ acts isometrically
on $X_0$, and hence on $X$, and $p$ is equivariant with respect to the
surjection $\G\to G$. 
The action of $\G$ on $X_0$ is properly discontinuous, so
since $X_0$ is simply-connected, $\G$ can be identified with
the orbifold fundamental group of $X_0/\G=Y_0/G$. 
The sets $\D$ and $p^{-1}(C)$ are $\G$-invariant, and 
$\G$ permutes components of $p^{-1}(C)$.

Some difficulties in studying this $\G$-action on
$X$ and $p^{-1}(C)$ are due to the fact that $X$ is not 
locally-compact, some points have infinite stabilizers in $\G$,
and the set of components of $p^{-1}(C)$ is not locally finite.

\section{Preliminary results}
\label{sec: prelim}

We keep notations and assumptions of Section~\ref{sec: notations}.

\begin{lem} \label{lem: preim conv}
\textup{(1)} 
$\D=p^{-1}(D)\subset p^{-1}(\mathrm{Int}(C))=\mathrm{Int}(p^{-1}(C))$. 
\newline
\textup{(2)}
$p^{-1}(\d C)$ is the boundary of $p^{-1}(C)$. \newline
\textup{(3)} There is $r_0>0$ such that
$r_0$-neighborhood of $p^{-1}(\d C)$ is disjoint from $\D$,
and different path-components of $p^{-1}(C)$ 
have disjoint $r_0$-neighborhoods.\newline
\textup{(4)}
Path-components of $p^{-1}(C)$ coincide with connected components,
and in particular are closed in $X$.\newline
\textup{(5)}
If $A$ is a component of $p^{-1}(C)$, then $A$ is the closure
of $A\cap X_0$, and
the stabilizers of $A$ and of $A\cap X_0$ in $\G$ are equal.
\end{lem}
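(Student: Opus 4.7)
The plan is to exploit that $p|_{X_0}$ is a Riemannian covering of $Y_0=Y\setminus D$, so that $p$ is a local isometry on $X_0$, while $\Delta = X\setminus X_0$ is precisely $p^{-1}(D)$. To see this identity, if $p(x)\in Y_0$ then a small evenly covered ball about $p(x)$ lifts isometrically to a neighborhood of $x$, forcing $x\in X_0$; hence $p^{-1}(Y_0)=X_0$ and $\Delta=p^{-1}(D)$. Since $D\subset\mathrm{Int}(C)$ this gives $\Delta\subset p^{-1}(\mathrm{Int}(C))$, and continuity of $p$ yields $p^{-1}(\mathrm{Int}(C))\subset\mathrm{Int}(p^{-1}(C))$. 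For the reverse inclusion, if $x$ is interior to $p^{-1}(C)$ then either $x\in X_0$, where $p$ is an open map and so carries an open neighborhood of $x$ into $C$, or $x\in\Delta$, in which case $p(x)\in D\subset\mathrm{Int}(C)$ automatically. This proves (1), and (2) follows because $p^{-1}(C)$ is closed.

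For (3), I would first show that $\mathrm{dist}_Y(\partial C, D)>0$ and that distinct components of $C$ in $Y$ are at uniformly positive distance. Both statements follow from a common scheme: the sets in question are closed, disjoint, and $G$-invariant, $G$ acts cocompactly on $\partial C$, and local convexity of $C$ forces its components to form a locally finite family in $Y$; if any infimum vanished, translating witness sequences by $G$ into a compact set and extracting limits would produce two closed disjoint sets sharing a point, a contradiction. Let $2\delta$ be a common lower bound. Because $p$ is $1$-Lipschitz, the $\delta$-neighborhood of $p^{-1}(\partial C)$ projects into the $\delta$-neighborhood of $\partial C$, which avoids $D$ and hence lies in $X_0$, giving (a) with $r_0=\delta$. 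For (b), after further shrinking $\delta$ below half the normal injectivity radius of $\partial C$ in $Y$, suppose $A_1\neq A_2$ were components of $p^{-1}(C)$ with points $x\in A_1$, $y\in A_2$ at distance $<2\delta$. The geodesic $[x,y]$ in $X$ must leave $p^{-1}(C)$, producing an exit point $z$ whose projection is within $\delta$ of $\partial C$; since $p$ is a local isometry on the $\delta$-neighborhood of $p^{-1}(\partial C)$, the nearest-point projection to $\partial C$ in $Y$ lifts to a retraction of this neighborhood onto $p^{-1}(\partial C)$, and local convexity of $C$ at the relevant point of $\partial C$ lifts to local connectedness of $p^{-1}(C)$ inside a small ball about the lifted nearest-point, forcing $x$ and $y$ to lie in a single path-component and contradicting $A_1\neq A_2$.

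Parts (4) and (5) are then short. By (3)(b), each path-component $A$ of $p^{-1}(C)$ is clopen in $p^{-1}(C)$, so path-components coincide with connected components, and each is closed in $X$ because $p^{-1}(C)$ is. For (5), any $x\in A\cap\Delta$ lies in $\mathrm{Int}(p^{-1}(C))$ by (1) and $A$ is open in $p^{-1}(C)$, so $x$ is interior to $A$ in $X$; since $X_0$ is dense in $X$ by definition of the metric completion, every neighborhood of $x$ meets $A\cap X_0$, giving $A=\overline{A\cap X_0}$. The stabilizer equality follows because $\Gamma$ preserves $X_0$, so $gA=A$ is equivalent to $g(A\cap X_0)=A\cap X_0$ by taking closures. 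The main obstacle I expect is the second half of (3)(b): two path-components of $p^{-1}(C)$ that map into the same component of $C$ must still be uniformly separated in $X$, and the argument must invoke both the normal injectivity radius of $\partial C$ and the local covering structure of $p$ near $\partial C$ in a quantitatively uniform way.
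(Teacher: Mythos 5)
Your proof is correct and follows essentially the same route as the paper's: parts (1), (2), (4), and (5) use the identical set-theoretic and density arguments, and your treatment of (3) simply fills in what the paper dispatches in one line, namely that the positive normal injectivity radius of $\partial C$ (together with $\mathrm{dist}(\partial C, D)>0$) lifts through the covering to give a product tube around $p^{-1}(\partial C)$ that separates $\Delta$ and the components of $p^{-1}(C)$. The extra step about uniform separation of distinct components of $C$ downstairs is harmless but unnecessary, since the lifted tube already does the work regardless of which components of $C$ the sets $A_1,A_2$ project to.
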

\begin{proof}
(1) All the unions in
\[
X_0\cup p^{-1}(D)=p^{-1}(Y_0)\cup p^{-1}(D)=p^{-1}(Y)\subset X_0\cup\D
\]
are disjoint 
so $p^{-1}(D)\subset\D$. Conversely,
$\D\subset p^{-1}(D)$ else there is $x\in \D$ with $p(x)\in Y_0$,
so $x\in X_0$ contradicting $X_0\cap\D=\emptyset$, 
proving the first equality.
%
%
As $D\subset\mathrm{Int}(C)$, we get 
$p^{-1}(D)\subset p^{-1}(\mathrm{Int}(C))$,
and it is trivial that
$p^{-1}(\mathrm{Int}(C))\subset \mathrm{Int}(p^{-1}(C))$. 
The last inclusion is an equality for if
$x\in \mathrm{Int}(p^{-1}(C))$, then either
$x\in \D\subset p^{-1}(\mathrm{Int}(C))$, 
or $x\in X_0$, in which case
a small neighborhood of $x$ lies in $X_0\cap p^{-1}(C)$,
hence a small neighborhood of $p(x)$ lies in $Y_0\cap \mathrm{Int}(C)$,
implying $x\in p^{-1}(\mathrm{Int}(C))$.

(2) Since $C$ is closed, so is $p^{-1}(C)$, hence the
boundary of $p^{-1}(C)$ equals to 
$p^{-1}(C)\setminus \mathrm{Int}(p^{-1}(C))$; 
the claim now follows as all unions in
\[
p^{-1}(C)=p^{-1}(\d C)\cup p^{-1}(\mathrm{Int}(C))=
p^{-1}(\d C)\cup \mathrm{Int}(p^{-1}(C)),
\] 
are disjoint, where the second equality follows from (1).

(3)  The claim follows from (2) and the 
fact that the submanifold $p^{-1}(\d C)\subset X_0$
has positive normal injectivity radius.

(4) If a connected component contains more than one path-component,
it cannot be connected by (3),
so components and path-components of $p^{-1}(C)$ coincide.
Since $C$ is closed, so is $p^{-1}(C)$.
Components of a closed set are closed (as the closure of a connected space
is connected).

(5)
The closure of $A\cap X_0$ lies in $\bar A$ which equals to 
$A$ by (4).
To see that any $a\in A$ lies in the closure
of $A\cap X_0$ it is enough to consider $a\in A\cap \D$,
which is a limit of some sequence
$x_i\in X_0$, where $x_i\in A$ for large $i$ as (1) implies 
$A\cap \D\subset A\cap\mathrm{Int}(p^{-1}(C))=\mathrm{Int}(A)$.

Element of $\G$ that stabilize $A$ also stabilize $A\cap X_0$
because $X_0$ is $\G$-invariant, and conversely, element of $\G$
that stabilize $A\cap X_0$, also stabilize its closure, which is $A$.
\end{proof}

\begin{rmk}
In view of (4) we refer to path-components of $p^{-1}(C)$ as
{\it components}. Lemma~\ref{lem: preim conv} also implies that
if $A$ is a component of $p^{-1}(C)$, then
$\d A=A\cap p^{-1}(\d C)$.
\end{rmk}

\begin{lem} \label{lem: univ cover}
Let $A$ be a component of $p^{-1}(C)$.
If $\mathcal D$ is normal, then \newline
\textup{(i)} $A$ is convex, and the orthogonal projection $X\to A$ maps
$X\setminus A$ onto $\d A$.\newline
\textup{(ii)} 
$A\cap X_0$, $p(A\cap X_0)$, $p(A)$ 
are components of 
$p^{-1}(C)\cap X_0$, $C\cap Y_0$, $C$, 
respectively. The map $p\co A\cap X_0\to p(A\cap X_0)$ is
a universal covering, and $A\cap X_0$ is contractible.
\end{lem}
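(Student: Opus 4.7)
For (i), I would first establish that $p^{-1}(C)$ is locally convex in $X$. On $X_0$ this is immediate: $p|_{X_0}=p_0$ is a local isometry onto $Y_0$, and $C$ is locally convex in $Y$, so $p^{-1}(C)\cap X_0$ is locally convex in $X_0$. At points of $\D$ the preceding lemma gives $\D\subset\Int(p^{-1}(C))$, so $p^{-1}(C)$ contains a full open neighborhood of each such point, and local convexity there is trivial. The space $X$ is simply connected --- from Allcock's weak equivalence $X_0\hookrightarrow X$ together with contractibility of $X_0$ --- complete, and CAT$(-1)$, so a standard Cartan--Hadamard-type principle for closed connected locally convex subsets (to be cited from the Appendix) yields that the component $A$ is convex. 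Convexity then gives the $1$-Lipschitz nearest-point projection $X\to A$ as in the usual CAT$(-1)$ theory; for $x\in X\setminus A$ the foot of the perpendicular must lie in $\d A$, since any projection point in $\Int(A)$ could be moved slightly along the realizing geodesic to give a point of $A$ strictly closer to $x$.

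For (ii), convexity of $A$ from (i) makes $A$ a CAT$(-1)$ space in its induced metric, hence contractible. The branch locus $\D\cap A$ lies in $\Int(A)$ and has local codimension two in $X$ (by the normal local model for $\mathcal D$), so $A\cap X_0 = A\setminus\D$ is path-connected. Using that $p\co X\to Y$ is surjective (it extends the surjective universal covering $p_0$ continuously to the metric completion) and that $A$ is a maximal connected subset of $p^{-1}(C)$, one shows $p(A)$ equals a single component $C_0$ of $C$. Similarly $p(A\cap X_0)=C_0\setminus D$ is a component of $C\cap Y_0$, the latter being both open and closed in $C\cap Y_0$ (open since $C_0$ is open in $C$, closed since $C\setminus C_0$ is open and $Y_0$ is open). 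Since $p_0$ is a covering map onto $Y_0$ and $A\cap X_0$ is a path-component of $p_0^{-1}(C_0\setminus D)$, the restriction $p\co A\cap X_0\to C_0\setminus D$ is a covering map.

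To identify this covering as the universal covering, I would apply Allcock's construction to the CAT$(-1)$ space $C_0$ --- itself convex in $Y$ by the same Cartan--Hadamard argument applied to $C$ --- equipped with the inherited normal arrangement $\{h\in\mathcal D:h\subset C_0\}$. This produces a contractible cover of $C_0\setminus D$ whose metric completion is CAT$(-1)$. A naturality/uniqueness argument (the construction is determined locally, and CAT$(-1)$ local isometries into complete CAT$(-1)$ spaces globalize) identifies this completion with $A$ sitting inside $X$, and identifies the contractible cover with $A\cap X_0$. In particular $A\cap X_0$ is simply connected, so the covering $p\co A\cap X_0\to C_0\setminus D$ is universal.

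The main obstacle will be the passage from local to global convexity in (i): the classical Cartan--Hadamard proof uses local compactness, which fails at branch points of $X$, so a version adapted to non-locally-compact complete CAT$(-1)$ spaces must be invoked or proved in the Appendix. A secondary subtlety is the naturality of Allcock's construction under restriction to a convex subcomplex, which is what lets me identify $A$ with the completion of the universal cover of $C_0\setminus D$ in (ii).
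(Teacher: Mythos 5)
Part (i) of your proposal follows the paper's argument: local convexity of $p^{-1}(C)$ comes from the local isometry $p_0$ on $X_0$ and is trivial along $\D$ because $\D\subset\mathrm{Int}(p^{-1}(C))$, and then a local-to-global convexity principle gives convexity of the component $A$. Your worry about local compactness is unfounded: the result the paper cites, \cite[Proposition II.4.14]{BH}, applies to complete connected locally convex subsets of complete CAT$(0)$ spaces with no properness hypothesis, so nothing new needs to be proved in the Appendix.

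Part (ii) has two genuine gaps. First, the statement that $p(A)$ is an \emph{entire} component $C_0$ of $C$ is the nontrivial point, and surjectivity of $p$ together with maximality of $A$ does not yield it: a priori $p(A)$ could be a proper connected subset of $C_0$. The missing ingredient is $C_0\cap D\subset p(A)$, which the paper obtains by taking a geodesic segment in $C_0\cap Y_0$ ending at a point of $D$, lifting it isometrically to $A\cap X_0$, and using completeness of $X$ and closedness of $A$ to see the lift converges to a point of $A\cap\D$ lying over the endpoint; the open-and-closed argument you mention handles only the $Y_0$-part, and in your write-up it is deduced from $p(A)=C_0$ rather than used to prove it. Second, your route to the universal-covering claim — running Allcock's construction on $C_0$ with the induced arrangement and identifying its completed universal cover with $A$ by ``naturality'' — hides the entire difficulty in that identification. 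Allcock's theorem is stated for complete negatively curved manifolds, not convex subsets with boundary; more seriously, $A\cap X_0\to C_0\cap Y_0$ is the covering corresponding to $\ker(\pi_1(C_0\cap Y_0)\to\pi_1(Y_0))$, so it is universal if and only if that map is injective, and asserting that your contractible cover coincides with $A\cap X_0$ presupposes exactly this. The paper sidesteps all of it: Allcock's proof that $X_0\hookrightarrow X$ is a weak homotopy equivalence localizes to show $A\setminus\D\hookrightarrow A$ is a weak homotopy equivalence, and since $A$ is convex, hence contractible, $A\cap X_0$ is contractible and in particular simply connected, so the covering is universal. (Your ``local codimension two'' argument for path-connectedness of $A\setminus\D$ also needs justification, since $X$ is not a manifold along $\D$; it too follows from the weak homotopy equivalence.)
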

\begin{proof} 
(i) Path-connected locally convex 
subsets of $CAT(0)$ spaces are 
convex~\cite[Proposition II.4.14]{BH}, so convexity of $A$
would follow from local convexity of $p^{-1}(C)$. 
Since $p\co X_0\to Y_0$ is locally isometric and $C\cap Y_0$ 
is locally convex, we know that
$p^{-1}(C\cap Y_0)=p^{-1}(C)\cap X_0$ is locally convex, so it remains
to check local convexity of $p^{-1}(C)$ at the points of 
$p^{-1}(C)\cap\D$ which by Lemma~\ref{lem: preim conv} equals to
\[
p^{-1}(C)\cap p^{-1}(D)=
p^{-1}(C\cap D)\subset p^{-1}(\mathrm{Int}(C))\subset
\mathrm{Int}(p^{-1}(C))
\]
but in $\mathrm{Int}(p^{-1}(C))$ locally convexity follows
from local convexity of $X$.

By Lemma~\ref{lem: preim conv}(4) the subset $A$ is closed,
so there is the orthogonal projection of $X$ onto $A$
that associates to a point of $X$ its (unique)
nearest point in $A$. It follows that the projection 
maps $X\setminus A$ to $\d A$.

(ii) 
It can be deduced from the proof of~\cite[Lemma 3.3]{All-JDG}
that $A\setminus\D\to A$ is a weak homotopy equivalence.
Since by (i) $A$ is convex, it is contractible, and hence so is
$A\setminus\D=A\cap X_0$. In fact, $A\cap X_0$ is a
path-component of $p^{-1}(C)\cap X_0$ because
any path in $p^{-1}(C)\cap X_0$ that starts in $A\cap X_0$ 
must lie in $A$. Thus the subset $A\cap X_0$ is open and closed in 
$p^{-1}(C)\cap X_0$.
Since $p_0$ is a local homeomorphism, the subset
$p(A\cap X_0)=p(A)\cap Y_0$
is open and closed in $C\cap Y_0$, and hence $p(A\cap X_0)$
is a component of $C\cap Y_0$. 
Thus $p\co A\cap X_0\to p(A\cap X_0)$ is a covering map
of connected manifolds, which is universal as
$A\cap X_0$ is contractible.

Finally, we show that $p(A)$ is a component of $C$.
Let $Q$ be the component of $C$ that contains $p(A)$.
Then $Q\cap Y_0$ is the component of $C\cap Y_0$
containing $p(A)\cap Y_0$, so $p(A)\cap Y_0=Q\cap Y_0$.
Any $y\in Q\cap D$ is the endpoint of a
geodesic segment in $Q\cap Y_0$. Lift the segment to
the cover $A\cap X_0$. The lift is isometric,
so along the lifted segment one gets a Cauchy sequence
converging to some $x\in\D$. Then $y=p(x)$, and
$x\in A$ because $A$ is closed. Thus $Q\cap D\subset p(A)$,
which together with $Q\cap Y_0=p(A)\cap Y_0$ implies
$Q\subset p(A)$, as wanted. 
\end{proof}

To state the main result of this section we let 
$\pi_1^\text{orb}$ denote the orbifold fundamental group.

\begin{thm}\label{thm: rel hyp in Section 4}
With notations and assumptions of 
Section~\ref{sec: notations}, 
if $\mathcal D$ is normal, then $\G$ is non-elementary
relatively hyperbolic. Under the identification 
$\G\cong\pi_1^\text{orb}(X_0/\G)\cong\pi_1^\text{orb}(Y_0/G)$
conjugacy classes of peripheral subgroups of $\G$ 
correspond to orbifold fundamental
groups of components of $(C\cap Y_0)/G$, 
considered as subgroups of $\pi_1^\text{orb}(Y_0/G)$.
\end{thm}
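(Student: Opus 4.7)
The plan is to apply Theorem~\ref{thm: basic} to the isometric $\Gamma$-action on the complete CAT$(-1)$ space $X$, taking $\mathcal A$ to be the family of components of $p^{-1}(C)$. This family is $\Gamma$-invariant because $\Gamma$ permutes components of the $\Gamma$-invariant set $p^{-1}(C)$. Each $A\in\mathcal A$ is closed by Lemma~\ref{lem: preim conv}(4) and convex by Lemma~\ref{lem: univ cover}(i), and distinct members of $\mathcal A$ have disjoint $r_0$-neighborhoods by Lemma~\ref{lem: preim conv}(3), so $\mathcal A$ is $2r_0$-separated.

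For the remaining hypotheses of Theorem~\ref{thm: basic}, I would use Lemma~\ref{lem: preim conv}(1) to rewrite $X\setminus\bigcup_i\mathrm{Int}(A_i)$ as $X\setminus\mathrm{Int}(p^{-1}(C))$, which sits inside $X_0$. Since $p$ restricts on $X_0$ to a local isometric covering of $Y_0$ and $Y\setminus\mathrm{Int}(C)$ is a locally compact closed $C^1$-submanifold-with-boundary of $Y$, the preimage $X\setminus\mathrm{Int}(p^{-1}(C))$ inherits local compactness. The $\Gamma$-action on $X_0$ is properly discontinuous, and under the induced homeomorphism $X_0/\Gamma\cong Y_0/G$ the quotient $\bigl(X\setminus\mathrm{Int}(p^{-1}(C))\bigr)/\Gamma$ is identified with the compact orbifold $(Y\setminus\mathrm{Int}(C))/G$. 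Theorem~\ref{thm: basic} then yields that $\Gamma$ is hyperbolic relative to $\{\mathrm{Stab}_\Gamma(A)\}_{A\in\mathcal A}$.

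To identify the peripherals, Lemma~\ref{lem: preim conv}(5) gives $\mathrm{Stab}_\Gamma(A)=\mathrm{Stab}_\Gamma(A\cap X_0)$, and Lemma~\ref{lem: univ cover}(ii) shows $A\cap X_0$ is contractible with $p$ restricting to a universal covering onto a component of $C\cap Y_0$. Since $A\cap X_0$ is simply-connected and $\Gamma$ acts properly discontinuously on $X_0$ with quotient $Y_0/G$, under $\Gamma\cong\pi_1^{\text{orb}}(Y_0/G)$ the stabilizer of $A\cap X_0$ is exactly the orbifold fundamental group of the image of $p(A\cap X_0)$ in $(C\cap Y_0)/G$. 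Two members of $\mathcal A$ are $\Gamma$-equivalent precisely when their $p$-images lie in the same $G$-orbit of components of $C\cap Y_0$, equivalently, project to a common component of $(C\cap Y_0)/G$, supplying the claimed bijection on conjugacy classes.

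It remains to show $\Gamma$ is non-elementary. Since $Y/G$ has finite volume and $\sec Y\leq -1$, the lattice $G$ has limit set $\partial Y$ and contains a non-abelian free subgroup, so $\Gamma$ (surjecting onto $G$) is neither finite nor virtually-$\mathbb Z$. If $\Gamma$ stabilized a single $A_0\in\mathcal A$, then $p(A_0)$ would be a $G$-invariant component of $C$, closed as a component of the closed set $C$ and convex as a connected locally convex subset of the CAT$(0)$ manifold $Y$ by~\cite[Proposition II.4.14]{BH}; since the only nonempty closed convex $G$-invariant subset of $Y$ is $Y$ itself, this forces $C=Y$, contradicting $\partial C\neq\varnothing$. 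The main obstacle I expect is the cocompactness verification — pinning down the identification $\bigl(X\setminus\mathrm{Int}(p^{-1}(C))\bigr)/\Gamma\cong(Y\setminus\mathrm{Int}(C))/G$ despite $X$ being non-proper and $\Gamma$ having points with infinite stabilizers in $X$ — together with the careful matching of $\Gamma$-orbits of components of $p^{-1}(C)$ with components of $(C\cap Y_0)/G$ needed for the peripheral bijection.
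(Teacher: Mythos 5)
Your proposal is correct and follows essentially the same route as the paper: apply Theorem~\ref{thm: basic} to the components of $p^{-1}(C)$, using Lemma~\ref{lem: preim conv} and Lemma~\ref{lem: univ cover} for convexity, separation, local compactness of the complement, and the identification of stabilizers with orbifold fundamental groups of components of $(C\cap Y_0)/G$; your non-elementariness argument (a $\G$-invariant component would force its $p$-image to be a closed convex $G$-invariant subset, hence all of $Y$) is the same as the paper's appeal to the convex hull of the limit set.
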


\begin{rmk}
It is implicit in the conclusion of the above theorems
that when $G$ acts freely on $Y_0$, the inclusion
$(C\cap Y_0)/G\hookrightarrow Y_0/G$ is $\pi_1$-injective,
and in fact it induces an isomorphism on higher homotopy groups 
because $(C\cap Y_0)/G$,  $Y_0/G$ are aspherical.
\end{rmk}
\begin{proof} We are to check that
Theorem~\ref{thm: basic} applies to the family
of components of $p^{-1}(C)$. As mentioned before,
Allcock showed that $X$  is CAT$(-1)$ when $\mathcal D$
is normal. Components of $p^{-1}(C)$ are convex by 
Lemma~\ref{lem: univ cover}\ \!(i), and $\e$-separated for some 
small positive $\e$
because $\d C$ has positive normal injectivity radius and the
tubular $\e$-neighborhood of $\d C$ lifts to a tubular $\e$-neighborhood
of $p^{-1}(\d C)$. Lemma~\ref{lem: preim conv}\ \!(1) implies that 
the complement of $\mathrm{Int}(p^{-1}(C))$ in $X$ 
is locally compact; moreover, $\G$ acts on the complement properly 
discontinuously, and cocompactly: the former again follows from
$\D\subset \mathrm{Int}(p^{-1}(C))$ and the latter holds by
identifying the quotient with $(Y\setminus\mathrm{Int}(C))/G$,
which is compact by assumption.
Thus Theorem~\ref{thm: basic} implies that $\G$ is hyperbolic relative
to the family of stabilizers of components of $p^{-1}(C)$, which 
by Lemma~\ref{lem: preim conv}\ \!(5) and  
Lemma~\ref{lem: univ cover}\ \!(ii) equal to the stabilizers 
of components of $p^{-1}(C)\cap X_0$. 

That $\G$ acts properly discontinuously on $X_0$ with quotient
$X_0/\G=Y_0/G$, and that components of $p^{-1}(C)\cap X_0$ 
are $\e$-separated easily implies that
every component of $(C\cap Y_0)/G$ is the quotient of some
component of $p^{-1}(C)\cap X_0$ by its stabilizer in $\G$.
(Indeed, fix a component $E$ of $(C\cap Y_0)/G$, and
pick a point in $p^{-1}(E)$. That point lies in some
$A\cap X_0$, which is path-connected so
$p(A\cap X_0)\subset E$, and moreover the inclusion
is equality by $\e$-separation. 
As $\G$-action permutes components 
of $p^{-1}(C)\cap X_0$, we can identify $E$ with
the quotient of $A\cap X_0$ by its stabilizer in $\G$.)

By Lemma~\ref{lem: univ cover}\ \!(ii) 
components of $p^{-1}(C)\cap X_0$ are simply-connected,
so in above notations the stabilizer of 
$A\cap X_0$ in $\G$ can be identified with 
the orbifold fundamental group of $E$.

To see that $\G$ is non-elementary first note that $\G$ is 
not virtually cyclic ($\G$ surjects onto $G$, and $G$ 
contains a non-abelian free 
subgroup, being a lattice in a negatively pinched Hadamard manifold). 
If $\G$ equals to
the stabilizer of some $A$, then $G$ stabilizes $p(A)$,
so $p(A)$ would have to contain the convex hull
of the limit set of $G$, which is $Y$, contradicting the assumption
that $C$ is a proper subset.
\end{proof}

\begin{rmk}\label{rmk: bowditch's result}
The above proof also implies that $G$ is non-elementary hyperbolic 
relative to stabilizers of components of $C$.
This result is implicit in~\cite{Bow-rel}, and it holds regardless 
of whether $\mathcal D$ is normal
by applying Theorem~\ref{thm: basic} to $Y$.
\end{rmk}

\section{Sparseness implies relative hyperbolicity}
\label{sec: sparse}

We keep notations and assumptions of Section~\ref{sec: notations}
except those involving $C$.
For each set in $\mathcal D\cup\mathcal H$
we consider its open $\r$-neighborhood, and
let $R_\r$ denote the union of these neighborhoods.
Set $R_0:=\cap_{\r>0} R_\r$, i.e. $R_0$ is the union
of all the sets in $\mathcal D\cup\mathcal H$.
%
We suspect that $R_\r$ is an open 
regular neighborhood of $R_0$, provided $\r<\frac{r}{2}$,
and this should be provable
with stratified Morse theory, but for our purposes 
Proposition~\ref{prop: Re to Rr is h.e.} below suffices.

\begin{prop}
\label{prop: Re to Rr is h.e.}
If $\mathcal D$ is normal, 
and $\{\mathcal D,\mathcal H\}$ is $r$-sparse, then 
for any $\r,\e$ with $0<\e< \r<\frac{r}{2}$,
the inclusion $R_\e\cap Y_0\hookrightarrow R_\r\cap Y_0$ 
is a homotopy equivalence. 
\end{prop}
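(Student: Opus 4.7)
The plan is to construct a strong deformation retraction of $R_\r \cap Y_0$ onto $R_\e \cap Y_0$ using orthogonal projections. For each $A \in T := \mathcal D \cup \mathcal H$, closedness and convexity of $A$ in the pinched negatively curved manifold $Y$ yield a well-defined orthogonal projection $\pi_A \co Y \to A$ and a distance function $f_A(y) := d(y,A)$ that is smooth off $A$; sliding $y$ along the geodesic from $y$ to $\pi_A(y)$ decreases $f_A$, and preserves any $f_B$ whose normal direction at $y$ is orthogonal to that of $f_A$. The main task is to combine these local pushes into a consistent global retraction at points where the $\r$-neighborhoods of several sets in $T$ overlap.

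First I would use the $r$-sparseness of $\{\mathcal D, \mathcal H\}$ together with $\r < r/2$ to classify these overlaps: for distinct $A, B \in T$, the $\r$-neighborhoods intersect only if either (i) $A \cap B \neq \varnothing$, in which case $A$ and $B$ meet orthogonally by normality of $\mathcal D$ together with the standing orthogonality of hyperplanes to horoball boundaries, or (ii) $A$ and $B$ are disjoint hyperplanes in $\mathcal D$ both asymptotic to the center $z$ of a common horoball $C \in \mathcal H$, with the overlap lying inside $C$. In both cases I would show that the normal directions $-\nabla f_A$ and $-\nabla f_B$ are orthogonal at every common point of the two $\r$-neighborhoods: in case (i) this is transported from normality at a nearby point of $A \cap B$ using the orthogonal decomposition of a tubular neighborhood, while in case (ii) one works in horospherical coordinates based at $z$, uses that each hyperplane asymptotic to $z$ meets every horosphere centered at $z$ orthogonally, and compares the two normal directions on the horosphere through the common point.

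Given this orthogonal commutativity, at each $y$ the finite active set $\mathrm{Active}(y) := \{A \in T : f_A(y) < \r\}$ has pairwise commuting projections $\pi_A$, and I would pick a smooth nondecreasing $\psi \co [0, \r] \to [0, \r]$ with $\psi(s) = s$ for $s \leq \e/2$ and for $s$ near $\r$, $\psi(s) \leq s$ throughout, and $\psi(s) \leq \e$ on $[\e, \r-\delta]$ for some small $\delta > 0$, and set $\psi_t(s) = (1-t)s + t\psi(s)$. Define $H_t(y)$ to be the point obtained by, for each $A \in \mathrm{Active}(y)$, sliding along the geodesic $[y, \pi_A(y)]$ so that $d(H_t(y), A) = \psi_t(f_A(y))$ simultaneously; orthogonal commutativity makes this unambiguous. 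Then $H_0 = \mathrm{id}$, $H_t$ fixes $R_{\e/2} \cap Y_0$, continuity across the loci where $\mathrm{Active}(\cdot)$ jumps is automatic because $\psi_t(\r) = \r$ forces newly active $A$'s to contribute zero motion, and $H_1(R_\r \cap Y_0) \subset R_\e \cap Y_0$.

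The main obstacle will be establishing orthogonality of $-\nabla f_A$ and $-\nabla f_B$ in case (ii): since $A$ and $B$ do not intersect in $Y$, normality at an intersection point is not directly available. The argument I envision reduces this, via the horosphere foliation centered at $z$, to an orthogonality statement about the induced codimension-one arrangement on a single horosphere, together with quantitative control on the second fundamental form of horospheres coming from the pinched curvature bounds $\varkappa \le \sec \le -1$. Carrying out this reduction rigorously, and confirming that the orthogonality is uniform enough to make $H_t$ continuous across all of $R_\r \cap Y_0$, is the delicate technical step of the proof.
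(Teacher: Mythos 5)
Your reduction of the overlap structure via $r$-sparseness is correct and is exactly the right first step: with $\rho<r/2$, two $\rho$-neighborhoods can meet only if the sets intersect (orthogonally, by normality) or if they are disjoint hyperplanes asymptotic to the center of a common horoball $C$, with the overlap inside $N_\rho(C)$. The difficulty, however, is not that your case (ii) is ``delicate'' --- the orthogonality you hope to prove there is simply false. Take the upper half-space model of $\mathbf{H}^4$ with the horoball centered at $\infty$ and two disjoint hyperplanes given by vertical $2$-planes over parallel lines in $\mathbb{R}^3$ at Euclidean distance $1$. At a point midway between them at height $t\gg 1$ the two nearest-point directions are nearly opposite horizontal vectors (the vertical components of the minimizing geodesics tend to $0$ as $t\to\infty$), so the angle between $-\nabla f_A$ and $-\nabla f_B$ tends to $\pi$, not $\pi/2$. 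Consequently the simultaneous slide $H_t$ is not well defined in that region: moving toward $A$ increases the distance to $B$ and vice versa, and no point satisfies both prescribed distances while lying on both normal geodesics. The repair is to let the horoball win in that region: every point of $N_\rho(A)\cap N_\rho(B)$ lies in $N_\rho(C)$, and the radial push toward the center of $C$ lands it in $N_\varepsilon(C)\subset R_\varepsilon$ while mapping each hyperplane asymptotic to the center, and its $\varepsilon$-neighborhood, into itself (the distance to such a hyperplane is convex along the radial geodesic and tends to $0$, hence is non-increasing). This is, in effect, what the paper's proof does.

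Two further points distinguish your route from the paper's and cause additional trouble. First, even in case (i) exact orthogonality of $\nabla f_A$ and $\nabla f_B$ away from $A\cap B$ is more than normality provides in variable curvature; the usable statement (and the one the paper relies on) is only that the nearest-point projection onto $N_\varepsilon(A)$ carries $B$ into $B$ (because $\pi_A(B)=A\cap B$) and carries $N_\rho(B)$ into itself because projections onto convex sets are distance-nonincreasing. Also, as written your $H_t(y)$ cannot lie on several distinct geodesics $[y,\pi_A(y)]$ at once; it must be a composition of the individual pushes, and then what you need is the monotonicity above, not commutativity. Second, your global retraction has an unresolved tension: continuity where the active set jumps forces $\psi=\mathrm{id}$ near $\rho$, but then $H_1$ fails to move the outer shell $\{\rho-\delta<f_A<\rho\}$ into $R_\varepsilon$ when $A$ is the only active set there. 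The paper avoids building any global retraction: it invokes Whitehead's theorem and only pushes a given compact set $K$ into $R_\varepsilon\cap Y_0$, covering $K$ by finitely many precompact $U_i\subset N_\rho(h_i)\cap Y_0$ (so $\sup_{U_i}f_{h_i}<\rho$) and composing finitely many local homotopies, each supported near the tracks through the relevant set. I would recommend adopting that scheme, or at least restructuring your flow so that inside each horoball the radial push is performed first.
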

\begin{proof} 
By Whitehead's theorem 
it suffices to prove that the inclusion
induces isomorphism on all homotopy groups.
Surjectivity and injectivity will follow once
we show that every compact set $K$ (such as image of
sphere or disk) in $R_\r\cap Y_0$ can be pushed into $R_\e\cap Y_0$
by a map that is homotopic to identity of $R_\r\cap Y_0$ and 
has the property
that each point of $K\cap R_\e\cap Y_0$ stays in $R_\e\cap Y_0$
during this homotopy.
 
Note that $r$-sparseness imply that two arbitrary sets 
in $\mathcal D\cup\mathcal H$
intersect if their $\r$-neighborhoods intersect, and
normality implies that any two distinct sets  
in $\mathcal D\cup\mathcal H$ are disjoint or orthogonal, so that
if $h, h^\prime\in \mathcal D\cup\mathcal H$ intersect,
then the orthogonal projection $Y\to N_\e(h)$ takes 
$h^\prime$ to $N_\e(h)\cap h^\prime$, and also 
maps $N_\r(h^\prime)$ into itself
because the projection is distance non-increasing.

These properties ensure that for any subset
$U$ of $N_\r(h)\cap Y_0$, there exists a homotopy 
$f_{U,t}\co R_\r\cap Y_0\to R_\r\cap Y_0$ such that\newline
$\bullet$
$f_{U,0}=\mathrm{id}$ and $f_{U,1}$ maps $U$ into $N_\e(h)$, 
\newline
$\bullet$ 
tracks of $f_{U,t}$ lie on segments orthogonal to $h$, 
\newline $\bullet$ 
$f_{U,t}=\mathrm{id}$ except possibly on tracks that pass near $U$,
\newline
$\bullet$ 
if $h\in \mathcal D\cup\mathcal H$, then
$f_{U,t}$ maps $h$ and $N_\e(h)$ into themselves.

Now cover $K$ by finitely many precompact open sets 
$U_1, \dots, U_k$ such that $U_i$ lies in $N_\r(h_i)\cap Y_0$
where $h_i$'s are (not necessarily distinct) sets in 
$\mathcal D\cup\mathcal H$. Then $f_{U_1,t}$ pushes $U_1$
into $N_\e(h_1)\cap Y_0$. Set $U_i^0:=U_i$, and
let $U_i^1$ be the union of tracks of
$f_{U_1^0,t}$ passing through $U_i^0$; note that $U_i^1$ 
is precompact and 
$f_{U_2^1,t}$ pushes $U_2^1\supset U_2$ into $N_\e(h_2)\cap Y_0$. 
Let $U_i^2$ be the union of tracks of
$f_{U_2^1,t}$ passing through $U_i^1$; again $U_i^2$
is precompact, and
$f_{U_3^2,t}$ pushes $U_3^2\supset U_3$ into $N_\e(h_3)\cap Y_0$. 
Continuing in this fashion, we get homotopies
$f_{U_i^{i-1},t}$ pushing $U_i^{i-1}\supset U_i$ into $N_\e(h_i)\cap Y_0$.
The composition of these homotopies
pushes $K$ into $R_\e\cap Y_0$.   
\end{proof}

\begin{thm}\label{thm: main-orbi}
There exists constant $r>0$, depending only on $n$ and
$\varkappa$, such that if $\mathcal D$ is normal and 
$\{\mathcal D,\mathcal H\}$ is $r$-sparse,
then $\G$ is non-elementary relatively hyperbolic.
There is positive constant $\e\ll r$ such that
under the identification 
$\G\cong\pi_1^\text{orb}(X_0/\G)\cong\pi_1^\text{orb}(Y_0/G)$
conjugacy classes of peripheral 
subgroups of $\G$ correspond to
orbifold fundamental groups of components 
of $(R_\e\cap Y_0)/G$,  
considered as subgroups of $\pi_1^\text{orb}(Y_0/G)$.
\end{thm}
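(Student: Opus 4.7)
The plan is to construct a closed, $G$-invariant, locally convex subset $C \subset Y$ that satisfies the hypotheses of Theorem~\ref{thm: rel hyp in Section 4} and whose components capture the orbifold fundamental groups of components of $R_\e \cap Y_0$, and then invoke that theorem. Concretely, $C$ will be a disjoint union of small convex neighborhoods of the convex hulls of the components of $R_\rho$, for fixed $\rho$ with $0 < \rho < r/2$.

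The first main step is to show that each component $A$ of $R_\rho$ is $L$-quasiconvex in $Y$ for some $L = L(n,\varkappa,\rho)$. By $r$-sparseness and normality of $\mathcal{D}$, such an $A$ has one of two forms: either the $\rho$-neighborhood of a single element of $\mathcal{D} \cup \mathcal{H}$, or the $\rho$-neighborhood of a horoball $H \in \mathcal{H}$ together with all hyperplanes in $\mathcal{D}$ asymptotic to the center of $H$. In both cases a direct argument using the pinched negative curvature and the shared ideal point yields quasiconvexity with a uniform constant. The Bowditch--Anderson theorem \cite{Bow-gf-pinch,And} then provides a constant $K = K(n,\varkappa,L)$ such that the convex hull $H_A$ is contained in the $K$-neighborhood of $A$.

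Choosing $r = r(\varkappa,n)$ large enough---say $r > 2\rho + 2K + 4\delta$ for a small fixed $\delta > 0$---guarantees that for distinct components $A \neq A'$ of $R_\rho$ one has $d(H_A, H_{A'}) > 4\delta$. Setting $V_A := N_\delta(H_A)$, each $V_A$ is convex as a $\delta$-neighborhood of a convex set in CAT$(0)$, and the $V_A$'s are pairwise disjoint with disjoint closed $\delta$-neighborhoods. The union $C := \bigcup_A V_A$ is closed (by local finiteness of $\mathcal{D} \cup \mathcal{H}$), $G$-invariant, locally convex, and contains $D$ in its interior. The boundary $\partial C$ can be smoothed to $C^1$ via Remark~\ref{rmk: c1 boundary}, and $G$ acts cocompactly on $Y \setminus \mathrm{Int}(C)$ because $\mathcal{H}$ contains a horoball near every cusp of $Y/G$. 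Applying Theorem~\ref{thm: rel hyp in Section 4} gives that $\Gamma$ is non-elementary relatively hyperbolic, with peripheral subgroups equal to the orbifold fundamental groups of components of $(C \cap Y_0)/G$.

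To identify these with orbifold fundamental groups of components of $(R_\e \cap Y_0)/G$ for $0 < \e \ll r$, pick $\e \in (0,\rho)$. Each component $V_A$ of $C$ contains a unique component of $R_\e$, giving a $G$-equivariant bijection on components, so it suffices to show that the inclusion $R_\e \cap V_A \cap Y_0 \hookrightarrow V_A \cap Y_0$ induces a $\pi_1$-isomorphism. This follows by combining Proposition~\ref{prop: Re to Rr is h.e.} (applied locally within $V_A$) with a deformation retract of $V_A \cap Y_0$ onto $R_\rho \cap V_A \cap Y_0$ built from orthogonal projection to $H_A$ and the projection technique from the proof of that proposition. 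The hardest step is the uniform quasiconvexity in the first step, as the cluster structure (a horoball together with hyperplanes sharing its ideal point) must yield a quasiconvexity constant depending only on $n$ and $\varkappa$; this is what ultimately determines the size of $r(\varkappa,n)$.
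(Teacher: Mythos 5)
Your overall architecture matches the paper's: build $C$ as a union of small neighborhoods of convex hulls of the components of the arrangement, verify the hypotheses of Theorem~\ref{thm: rel hyp in Section 4}, and then use Proposition~\ref{prop: Re to Rr is h.e.} to identify the peripheral subgroups with the fundamental groups of components of $(R_\e\cap Y_0)/G$. However, the step you yourself flag as the hardest one rests on a false structural claim. You assert that sparseness and normality force each component of $R_\rho$ to be either the $\rho$-neighborhood of a single element of $\mathcal D\cup\mathcal H$ or of a horoball together with the hyperplanes asymptotic to its center. This is not true: $r$-sparseness only constrains \emph{disjoint} pairs in $\mathcal D\cup\mathcal H$, while normality explicitly permits hyperplanes to intersect (orthogonally). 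A single component of $R_0$ can therefore contain an arbitrarily long chain $h_1,h_2,h_3,\dots$ with $h_i\cap h_{i+1}\ne\emptyset$ but $h_i\cap h_j=\emptyset$ for $|i-j|\ge 2$, and can be unbounded in a way not controlled by any one element or ideal point. Your ``direct argument using the shared ideal point'' does not apply to such components, so the uniform quasiconvexity constant $L(n,\varkappa)$ is not established. The paper's route here is different and essential: any two points in a component of $R_0$ are joined by a piecewise geodesic with side lengths $\ge r$ (from sparseness) and turning angles $\ge\pi/2$ (from orthogonality), which by Proposition~\ref{prop:TurningAngle} and stability of quasi-geodesics is uniformly close to a geodesic; quasiconvexity with a constant depending only on $n$ and $\varkappa$ follows, and only then does Bowditch's hull theorem enter.

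A second, smaller gap: you never verify that $C$ is a \emph{proper} subset of $Y$, i.e.\ that $\d C\ne\emptyset$ and $Y\setminus\mathrm{Int}(C)$ is nonempty, which is required both by the standing assumptions of Section~\ref{sec: notations} and by the non-elementarity conclusion. Once components of $R_0$ are allowed to be large unions of intersecting hyperplanes, this is not automatic; the paper proves it by producing a point $y$ on a maximal intersection $h_1\cap\dots\cap h_k$, a direction $v$ making angle $\ge\alpha_n$ with all hyperplanes through $y$ (using the isometric local model from normality), and a sine-law comparison showing that the point at distance $r/2$ along $v$ escapes the $L$-neighborhood of every set in $\mathcal D\cup\mathcal H$, provided $2L<r\sin\alpha_n$. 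This condition is part of what determines $r(\varkappa,n)$, alongside the quasiconvexity constants. Your final identification of peripheral subgroups via Proposition~\ref{prop: Re to Rr is h.e.} is essentially the paper's argument and is fine, but the two omissions above need to be repaired before the proof stands.
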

\begin{proof}
To prove relative hyperbolicity the strategy is
to find $C$ as in Section~\ref{sec: notations}
and then apply Theorem~\ref{thm: rel hyp in Section 4}.

Since $\{\mathcal D,\mathcal H\}$ is $r$-sparse,
and sets in $\mathcal D\cup\mathcal H$ are either disjoint or orthogonal,
any two points in the same component of 
$R_0$ lie on a piecewise
geodesic with sidelengths $\ge r$ and angles at vertices $\ge \pi/2$.
Suppose $r>r_1$; then
this piecewise geodesic is a $(\l_1,\e_1)$-quasi-geodesic,
where $r_1$, $\l_1$, $\e_1$ are the constants
from by Proposition~\ref{prop:TurningAngle} corresponding to 
$\theta_1=\pi/2$. 
Any subpath of a $(\l_1,\e_1)$-quasi-geodesic,
is a $(\l_1,\e_1)$-quasi-geodesic; thus any
two points of $R_0$ can be joined by a 
$(\l_1,\e_1)$-quasi-geodesic.
By stability of quasi-geodesics~\cite[Theorem III.H.1.7]{BH},
there is a constant $r_2$ such that any 
$(\l_1,\e_1)$-quasi-geodesic in a CAT$(-1)$ space
is $r_2$-Hausdorff close
to a geodesic with the same endpoints; 
thus each component of $R_0$ is $r_2$-quasiconvex.
Bowditch~\cite[Proposition 2.5.4]{Bow-gf-pinch}
proved that each $r_2$--quasiconvex subset of a 
Hadamard manifold with $\varkappa\le \sec\le -1$ must be
$L=L(r_2,\varkappa )$-Hausdorff close to its convex hull.

For our purposes it is better to work with  
closed $\e$-neighborhoods of convex hulls, 
denoted $\mathrm{hull}_\e$, and to simplify notations we
increase $L$, replacing it with $L+\e$, 
to ensure that any $r_2$--quasiconvex subset $E$ is
$L$-Hausdorff close to 
$\mathrm{hull}_\e(E)$.

In addition to $r>r_1$, suppose $r>2L$; 
then distinct components of $R_0$
have disjoint $\mathrm{hull}_\e$'s because of $r$-sparseness.
Let $C$ be the union of $\mathrm{hull}_\e$'s
of the components of $R_0$.
Thus $C$ is a $G$-invariant, closed,
locally convex subset of $Y$, and furthermore,
each component of $C$ is $L$-Hausdorff close to 
the corresponding component of $R_0$. 
As $C$ contains the $\e$-neighborhood of $D$, we have
$D\subset\mathrm{Int}(C)$, and $G$ acts cocompactly
on $Y\setminus\mathrm{Int}(C)$ because
$C$ contains every horoball in $\mathcal H$.
If $\e$ is sufficienty small, 
Remark~\ref{rmk: c1 boundary} implies that
$\d C$ is $C^1$-smooth provided it is
non-empty.

\begin{lem} There is $\a_n\in (0,\frac{\pi}{2})$ depending only on 
$n=\dim(Y)$ such that if $2L<r\sin\a_n$, then
$C$ is a proper subset of $Y$ so that $\d C$ is non-empty. 
\end{lem}
\begin{proof}
If $G$ acts freely, the result is immediate for homological 
reasons without assuming $2L<r\sin\a_n$. Indeed, the assumption 
$r>2L$ allows to apply Proposition~\ref{prop: Re to Rr is h.e.},
which yields a deformation retraction of $R_L/G$ onto its proper subset.
Now $C\subset R_L$ so if $C=Y$, the deformation retraction
pushes the fundamental class of $Y/G$ to a proper subset,
which is impossible. This idea becomes harder
to implement in the orbifold case, 
so we settle for ad hoc argument below.

First we use $r$-sparseness to find $y\in Y$ such that
the open ball $B(y,\frac{r}{2})$
is disjoint from horoballs in $\mathcal H$, and only
intersects those hyperplanes in $\mathcal D$ that pass 
through $y$. 

Consider the hyperplanes $h_1, \dots, h_k$ with
nonempty intersection $h_0$, where we assume
$k$ is the largest possible. 
If $y\in h_0$, and
if $h\in\mathcal D$ intersects $B(y,\frac{r}{2})$,
then $h$ must intersect $h_0$. (Indeed, 
$h$ intersect each $h_i$ as the distance between 
$h$, $h_i$ is $<r$, and since $\mathcal D$ is normal,
the orthogonal projection of $h_i$ onto $h$ is $h\cap h_i$,
so the projection of $h_0$ lies in each $h\cap h_i$,
and hence lies in $h\cap h_0$,
which is therefore nonempty).
Maximality of $k$ forces $h=h_i$ for some $i$, so
$h_1, \dots, h_k$ are the only hyperplanes in $\mathcal D$
that intersect $B(y,\frac{r}{2})$. Similarly, a horoball
in $\mathcal H$ that intersects $B(y,\frac{r}{2})$ must
intersect $h_0$, so if $h_0$ is disjoint from
horoballs in $\mathcal H$, then any $y\in h_0$ has
the desired property. Note that $h_0$
is a complete totally geodesic submanifold of $Y$, and
our underlying assumptions on $\mathcal D$, $\mathcal H$
imply that if $h_0$ does intersect some $B\in \mathcal H$,
then $h_0$ is asymptotic to the center of $B$. 
If $h_0$ intersects some $B\in \mathcal H$, then we pick
$y\in h_0$ to be a point with $d(y, B)=\frac{r}{2}$.
Then the ball $B(y,\frac{r}{2})$ is disjoint from
any horoball in $\mathcal H$ because
distinct horoballs in $\mathcal H$ are $r$-separated. 

A linear algebra argument shows that
in the definition of a normal family of hyperplanes
one can choose the linear isomorphism to be isometric
with respect to the (Riemannian) inner product on the tangent 
space at $p$ and the Euclidean inner product on 
$\mathbb R^n=\mathbb R^{n-2m}\times \mathbb C^m$.
So there is $\a_n\in (0,\frac{\pi}{2})$ depending only on 
$n=\dim(Y)$, and a vector $v\in T_yY$ that forms angle $\ge\a_n$
with any hyperplane in $\mathcal D$ through $y$.
Issue a geodesic in the direction of $v$, and denote 
by $y_v$ the point where it hits $\d B(y,\frac{r}{2})$.
Let $z_v$ be a point on a hyperplane in $\mathcal D$ 
through $y$ that is closest to $y_v$. 
In the comparison triangle $\bar\D(\bar y,\bar y_v, \bar z_v)$
in $\mathbb R^2$ the angles at $\bar y$, $\bar z_v$ are $\ge \a_n$, 
$\ge \frac{\pi}{2}$ respectively, so
sine law yields 
$d(y_v, z_v)=d(\bar y_v, \bar z_v)\ge 
\frac{r}{2}\sin\a_n$. To finish the proof of the lemma
we show that $y_v\notin C$. 
Indeed, if $y_v$ were in $C$, then $y_v$ would lie 
in the $L$-neighborhood of 
some set in $\mathcal D\cup\mathcal H$. That set by above
would be a hyperplane through $y$, hence
$L\ge d(y_v, z_v)\ge \frac{r}{2}\sin\a_n$, contradicting 
the assumption $L<\frac{r}{2}\sin\a_n$.
\end{proof}

Continuing the proof of Theorem~\ref{thm: main-orbi},
we invoke Theorem~\ref{thm: rel hyp in Section 4} to conclude
that $\G$ is non-elementary relatively hyperbolic.

It remains to identify orbifold fundamental groups of
corresponding components of $(R_\e\cap Y_0)/G$ and $(C\cap Y_0)/G$. 
Construction of $C$ implies $R_0\subset R_\e\subset C\subset R_L$.
By Proposition~\ref{prop: Re to Rr is h.e.}
the inclusion $R_\e\cap Y_0\to R_L\cap Y_0$ is
a homotopy equivalence that factors through $C\cap Y_0$.
So the inclusion $C\cap Y_0\to R_L\cap Y_0$ is surjective on
homotopy groups, and moreover is injective on homotopy groups for if
$f\co S^k\to C\cap Y_0$ is null-homotopic in $R_L\cap Y_0$,
then this homotopy can be pushed to $C$ by composing it with 
the orthogonal projection of $Y$ onto the component of $C$ 
that contains $f(S^k)$. 
Thus the inclusion $R_\e\cap Y_0\to C\cap Y_0$ is a homotopy
equivalence.

Fix an arbitrary component $K$ of $C$, and set 
$K_\e:=K\cap R_\e$ and $K_0:=K\cap R_0$. 
Note that $\mathrm{Stab}_G(K\cap Y_0)=\mathrm{Stab}_G(K_\e\cap Y_0)$,
i.e. $K\cap Y_0$ and $K_\e\cap Y_0$ have equal stabilizers in $G$.
Indeed, $K$ is $\mathrm{Stab}_G(K\cap Y_0)$-invariant, and therefore 
so is $K_0$, which implies that $\mathrm{Stab}_G(K\cap Y_0)$ 
preserves $K_\e$, and hence $K_\e\cap Y_0$. On the other hand,
$\mathrm{Stab}_G(K_\e\cap Y_0)$ preserves $K_\e$, and 
hence $K_0$, which means that it preserves $K=\mathrm{hull}_\e(K_0)$,
and hence $K\cap Y_0$.

Since the inclusion $R_\e\cap Y_0\to C\cap Y_0$ is a homotopy
equivalence, so is the inclusion $K_\e\cap Y_0\hookrightarrow K\cap Y_0$, hence it lifts to a homotopy equivalence of universal covers 
$\widetilde{K_\e\cap Y_0}\hookrightarrow\widetilde{K\cap Y_0}$. 
In particular, the preimage of $K_\e\cap Y_0$ under the universal cover
$\widetilde{K\cap Y_0}\to K\cap Y_0$ is connected, and we identify it 
with $\widetilde{K_\e\cap Y_0}$. Also
Lemma~\ref{lem: univ cover} allows us identify
the universal cover $\widetilde{K\cap Y_0}\to K\cap Y_0$ 
with the restriction of
$p$ to a component of $p^{-1}(C)\cap X_0$.
With these identifications it follows that
$\mathrm{Stab}_\G(\widetilde{K\cap Y_0})=
\mathrm{Stab}_\G(\widetilde{K_\e\cap Y_0})$,
which is exactly what we claimed 
(translating to orbifold terminology).
\end{proof}

\begin{add} 
\label{add: h.e.} Under the assumptions of Theorem~\ref{thm: main-orbi},
if $G$ acts freely on $Y$, then each component of 
$(C\cap Y_0)/G$ is aspherical, and
both inclusions 
\[
(R_\e\setminus R_0)/G\hookrightarrow (R_\e\cap Y_0)/G\hookrightarrow
(C\cap Y_0)/G
\]
are homotopy equivalences. 
\end{add}
\begin{proof} 
Contracting along radial geodesics in each horoball 
in $\mathcal H$ is a $G$-equivariant deformation retraction 
$R_\r\cap Y_0\to R_\r\setminus R_0$, which descends to
a homotopy equivalence
$(R_\e\setminus R_0)/G\hookrightarrow (R_\e\cap Y_0)/G$. 
Checking that the other inclusion is a homotopy equivalence
is best done one component at a time, so let $K$ denote
a component of $C$ and use associated notations from 
the proof of Theorem~\ref{thm: main-orbi}. 
By Lemma~\ref{lem: univ cover}(ii) 
$\widetilde{K\cap Y_0}$ is contractible, so 
$K_\e\cap Y_0$, $K\cap Y_0$ are aspherical. 
The last paragraph in the proof of Theorem~\ref{thm: main-orbi}
implies that the inclusion 
\[
(K_\e\cap Y_0)/\mathrm{Stab}_G(K\cap Y_0)
\hookrightarrow (K\cap Y_0)/\mathrm{Stab}_G(K\cap Y_0)
\]
induces a $\pi_1$-isomorphism of aspherical manifolds, and hence
a homotopy equivalence.
\end{proof}

\begin{proof}[Proof of Theorem~\ref{thm-intro: const r, cusp}]
To simplify notations set $Z:= R_0/G$; this was also
denoted $Q\cup S$ in the introduction.
Fix a smooth closed regular neighborhood $T$ of $Z$ 
inside $(C\cap Y_0)/G$, and
show that the inclusion $\d T\hookrightarrow (C\cap Y_0)/G$ is a 
homotopy equivalence. 
To this end pick $\e$ is so small that $R_\e/G$ lies
inside $T$, and let $T_\e$ denote a regular neighborhood of
$Z$ inside $R_\e/G$. Then we
have inclusions 
\[
T_\e\setminus Z\underset{i}{\hookrightarrow}
(R_\e\setminus R_0)/G\underset{j}{\hookrightarrow} 
T\setminus Z\underset{k}{\hookrightarrow}
(C\cap Y_0)/G.
\]
Standard properties of regular neighborhoods (see e.g.~\cite{Bry})
imply that $j\circ i$ is a homotopy equivalence, as
$T\setminus Z$ is the union along $\d T_\e$ of
$T\setminus\mathrm{Int}(T_\e)$ and 
$T_\e\setminus Z$, which are diffeomorphic to 
$\d T_\e\times [0,1]$ and $\d T_\e\times [0,1)$, respectively.
Addendum~\ref{add: h.e.} says that
$k\circ j$ is a homotopy equivalence.
This easily implies that $j$
induces isomorphism on homotopy groups, hence
$i$, $j$, $k$ are also homotopy equivalences.
Finally, $\d T\hookrightarrow T\setminus Z\cong \d T\times [0,1)$ is
a homotopy equivalence, and hence so is
the inclusion $\d T\hookrightarrow (C\cap Y_0)/G$,
thus components of $\d T$ are aspherical and 
$\pi_1$-injectively embedded, proving (1)-(2). 
The assertions (3)--(4) are immediate from
Theorem~\ref{thm: main-orbi}. 

Parts (5) and (7) follow from~\cite[Theorem 1.3]{Bel-relhypz} 
saying that for any compact aspherical manifold
$N$ that satisfies (1)--(4), 
the group $\pi_1(N)$ is co-Hopf, and $\pi_1(N)$ does not split
as an amalgamated product or HNN-extension over subgroups
that are elementary in the relatively hyperbolic group structure 
given by (3)--(4).

To prove (6) recall that
Dru{\c{t}}u-Sapir~\cite[Theorem 1.12]{DS-split} showed that
if none of the peripheral subgroups of a relatively hyperbolic group $H$
is isomorphic to a non-elementary relatively hyperbolic group,
and if $H$ does not split over an elementary subgroup, then
$\mathrm{Out}(H)$ is finite. By the previous paragraph
$\pi_1(N)$ does not split over elementary subgroups.
Components of $\d N$ are closed aspherical manifolds, so by 
Proposition~\ref{prop: closed aspherical plus rel hyp}
if any one of them has non-elementary relatively hyperbolic 
fundamental group, then that component has positive simplicial 
volume, which we assume arguing by contradiction. Since $S$ is compact,
the assumptions in Section~\ref{sec: notations} imply that
$S$ and $Q$ are disjoint, so components of $\d N$ are either
components of $\d Q$, or the boundary components of a small
regular neighborhood of $S$. Components of $\d Q$ are
infranilmanifolds, so they have zero simplicial volume.
Thus the boundary of a regular neighborhood of $S$ has
positive simplicial volume.
Let $DM$ be the manifold obtained by 
doubling $M\setminus\mathrm{Int}(Q)$ along the boundary, 
which is $\d Q$;
we think of $S$ as sitting in one half of the double.
Then $DM\setminus S$ 
must infinite simplicial volume~\cite[pp. 17]{Gro-vol}
(cf.~\cite[pp. 56-57]{Gro-vol}),
as the interior of a compact manifold whose boundary has
positive simplicial volume. This gives a contradiction as
$DM\setminus S$ has finite simplicial
volume by~\cite[page 59]{Gro-vol} with details given in 
Lemma~\ref{lem: finte simpl vol} below. Thus we have proved (6).

For part (8) recall that each
peripheral subgroup of $\pi_1(N)$ is the fundamental group of a
closed aspherical $3$-manifold, which are geometrizable due to 
work of Thurston, Perelman and others. 
(To our knowledge the Ricci flow proof of the geometrization conjecture
has been fully written only for orientable manifolds, but non-orientable
aspherical $3$-manifolds are Haken~\cite{Hem-book}, so Thurston's
proof applies in this case. Actually, our argument below can be phrased to
depend only on the geometrization of orientable manifolds but
we felt it would only confuse the matters.)

Hempel~\cite{Hem} proved that geometrizable $3$-manifold groups are
residually finite, and in particular, they have solvable word problem.
Preaux~\cite{Pre-orie, Pre-nonorie} showed that fundamental groups
of $3$-manifold with geometrizable orientation covers
have solvable conjugacy problem.
According to Farb~\cite{Far-rel} and Bumagin~\cite{Bum-rel}, 
a relatively hyperbolic group
inherits solvability of word and conjugacy problems from its peripheral subgroups, hence these problems are solvable for $\pi_1(N)$.  
As in the proof of~\cite[Theorem 1.1(5)]{Bel-rh-warp},
the result of Osin~\cite{Osi-fil} and residual finiteness of the
peripheral subgroups implies that
$\pi_1(N)$ is (fully) residually hyperbolic. 

By the proof of (6), finiteness of $\mathrm{Out}(\pi_1(N))$ would follow
if we find a relatively hyperbolic group structure on $\pi_1(N)$
such that every peripheral subgroup is elementary in the 
relatively hyperbolic group structure given by (3)--(4),
and also is not isomorphic to a non-elementary 
relatively hyperbolic group.

According to~\cite[Corollary 1.14]{DOS}, if $H$ is hyperbolic 
relative to $\{P_i\}$ and if each $P_i$ is hyperbolic relative
to $\{P_{i}^j\}$, where we allow $P_i$ to equal $P_i^j$, then
$H$ is hyperbolic relative to $\{P_i^j\}$'s. This process
can be iterated, and in general need not terminate, but
as we note below it does terminate if we start with 
the fundamental group of a closed aspherical 3-manifold,
which would complete the proof that  $\mathrm{Out}(\pi_1(N))$
is finite. Fix a closed aspherical $3$-manifold $M$. 
By the geometrization theorem, $M$ is 
hyperbolic, Sol, Seifert fibered, or has a nontrivial JSJ decomposition 
along incompressible tori and Klein bottles whose pieces are
either hyperbolic or Seifert fibered. 
Recall that the simplicial volume $\norm{M}$ is nonzero if and only if  
$M$ is either hyperbolic or has a hyperbolic piece in the
JSJ decomposition. If $\norm{M}=0$, then 
Proposition~\ref{prop: closed aspherical plus rel hyp} implies
that $\pi_1(M)$ is not non-elementary relatively hyperbolic,
so the process terminates with $\pi_1(M)$.
If $M$ is hyperbolic, the process terminates with the trivial subgroup.
It remains to consider the case when 
there is a hyperbolic piece $\mathcal H$ in the
JSJ decomposition. Then $\pi_1(M)$  
is hyperbolic relative to fundamental groups of the components of 
$M\setminus \mathcal H$, as follows e.g. from Dahmani's combination
theorem~\cite{Dah-comb} and the fact that $\pi_1(\mathcal H)$
is hyperbolic relative to fundamental groups of tori and Klein bottles 
that lie on $\d\mathcal H$. So passing to the peripheral subgroups
corresponding to the components of $M\setminus \mathcal H$, 
and continuing in this fashion, after finitely many steps 
we end up with components that are either 
aspherical graph manifolds with incompressible boundary,
or surfaces of zero Euler characteristic appearing on the boundary
of hyperbolic pieces. In either case the process terminates; 
indeed, by Lemma~\ref{lem: graph mflds not rh} below, or
alternatively by~\cite[Theorem 11.1]{BDM}, the fundamental group
of an aspherical graph manifold with incompressible boundary 
is not non-elementary relatively hyperbolic, and the same
holds for surfaces of zero Euler characteristic because
any non-elementary relatively hyperbolic group
contains $\mathbb Z\ast\mathbb Z$, so it is not 
virtually-$\mathbb Z^2$.

Osin~\cite{Osi-asy} proved that a relatively hyperbolic group
has finite asymptotic dimension if so do all the peripheral subgroups.
Bell-Dranishnikov\cite{BelDra-asdim-trees, BelDra-asdim-ext}
showed that the class of finitely generated groups of finite asymptotic
dimension is closed under extensions, amagamated products, and
HNN-extensions. By the previous paragraph, one can 
build $\pi_1(N)$ in finitely many steps starting from the trivial group 
and using extensions, amagamated products, HNN-extensions,
and passing to peripheral subgroups, so that $\pi_1(N)$ has 
finite asymptotic dimension. This completes
the proof of (8), and hence of Theorem~\ref{thm-intro: const r, cusp}.
\end{proof}

\begin{lem}\label{lem: graph mflds not rh}
If $M$ is a compact aspherical graph manifold
such that $\d M$ is incompressible and has 
zero Euler characteristic, then $\pi_1(M)$
is not isomorphic to a non-elementary relatively hyperbolic group.
\end{lem}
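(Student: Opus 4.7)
The plan is to assume for contradiction that $G=\pi_1(M)$ is non-elementary relatively hyperbolic with respect to a family $\{P_i\}$ of peripheral subgroups, and to derive a contradiction from the geometric decomposition of $M$. I will rely on two standard facts from the theory of relatively hyperbolic groups: (i) any two distinct conjugates of peripheral subgroups intersect in a finite subgroup (\emph{almost malnormality}), and in particular peripherals are self-commensurating; and (ii) the centralizer in $G$ of any infinite-order element is either virtually cyclic or is contained in a (unique) conjugate of some $P_i$.

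Consider the JSJ decomposition of $M$ along incompressible tori and Klein bottles into Seifert-fibered pieces $M_1,\dots,M_r$. For each piece $M_k$, the regular fiber gives an infinite-order central element $t_k\in\pi_1(M_k)$, and $\pi_1(M_k)$ contains a $\mathbb{Z}^2$ coming from any of its boundary walls (for Klein bottle walls this uses that the Klein bottle group contains $\mathbb{Z}^2$ of index two). In particular, $\pi_1(M_k)$ is not virtually cyclic, so by (ii) applied to $t_k$, the subgroup $\pi_1(M_k)$ lies in some conjugate $P_k$ of a peripheral. If $M$ itself is Seifert, then $r=1$ and $G=\pi_1(M_1)\subseteq P_1$ already contradicts non-elementarity. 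Otherwise, whenever Seifert pieces $M_k$ and $M_{k'}$ share a wall $T$, the subgroup $\pi_1(T)\subseteq P_k\cap P_{k'}$ contains $\mathbb{Z}^2$, so (i) forces $P_k=P_{k'}$; connectedness of the JSJ graph then produces a single conjugate $P$ of some peripheral into which every $\pi_1(M_k)$ embeds.

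To conclude $G\subseteq P$ I use the Bass--Serre structure: fix a spanning tree of the JSJ graph so that the vertex groups embed coherently into $P$, and for each non-tree edge $e$ let $t_e$ be the associated stable letter, satisfying $t_e\,\iota_+(\pi_1(T_e))\,t_e^{-1}=\iota_-(\pi_1(T_e))$. Both images lie in $P$, so $t_e P t_e^{-1}\cap P$ contains an infinite subgroup, and (i) together with self-commensuration forces $t_e\in P$. Since $G$ is generated by the vertex groups together with the stable letters, we deduce $G\subseteq P$, contradicting non-elementarity. The main obstacle will be verifying that each $\pi_1(M_k)$ is not virtually cyclic, so that fact (ii) actually places it inside a peripheral rather than letting it escape as a merely virtually cyclic centralizer; this depends on the incompressibility of $\partial M_k$ in $M$ and on the hypothesis that every boundary wall has zero Euler characteristic, which is precisely what guarantees a $\mathbb{Z}^2$ subgroup inside $\pi_1(M_k)$.
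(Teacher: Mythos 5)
Your argument is correct in its essentials, but it takes a genuinely different route from the paper. The paper's proof is very short and ``soft'': it doubles $M$ along $\partial M$, uses Dahmani's combination theorem to promote the hypothesized relatively hyperbolic structure on $\pi_1(M)$ to a non-elementary one on $\pi_1(DM)$, and then invokes Proposition~\ref{prop: closed aspherical plus rel hyp} (positivity of simplicial volume for closed aspherical manifolds with non-elementary relatively hyperbolic $\pi_1$, via Mineyev--Yaman) against the vanishing of the simplicial volume of graph manifolds. Your proof instead works directly inside the group: you use the JSJ/Bass--Serre decomposition, almost malnormality and self-normalization of peripherals, and the classification of centralizers of infinite-order elements to push every vertex group, and then every stable letter, into a single peripheral conjugate $P$, contradicting non-elementarity. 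What your approach buys is independence from the simplicial-volume machinery and from the doubling/combination step; what the paper's approach buys is brevity and the fact that it reuses Proposition~\ref{prop: closed aspherical plus rel hyp}, which is needed elsewhere in the argument anyway.

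Two small points to tighten. First, the regular fiber of a Seifert piece $M_k$ generates a normal infinite cyclic subgroup of $\pi_1(M_k)$, but it is central only when the fibration is suitably orientable; in general the centralizer of the fiber class $t_k$ has index at most $2$ in $\pi_1(M_k)$. This does not break the argument: the index-$\le 2$ centralizer still contains a $\mathbb{Z}^2$, so $t_k$ is parabolic with $C(t_k)\le P_k$, and any $h\in\pi_1(M_k)$ with $ht_kh^{-1}=t_k^{-1}$ satisfies $t_k^{-1}\in P_k\cap hP_kh^{-1}$, whence $h\in P_k$ by almost malnormality and self-normalization; but you should say this rather than assert centrality. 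Second, you implicitly use that every infinite-order element of a relatively hyperbolic group is either loxodromic (with virtually cyclic centralizer) or parabolic (lying in a unique peripheral conjugate); this is standard but worth citing, as is the self-normalization of infinite peripheral subgroups. With those references supplied, your proof stands as a valid alternative.
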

\begin{proof} Arguing by contradiction suppose
$\pi_1(M)$ is non-elementary relatively hyperbolic.
Then the fundamental group of each component of $\d M$ must lie
in a peripheral subgroup, like all virtually-$\mathbb Z^2$ subgroups
do. Let $DM$ denote the double of $M$ along $\d M$.
By Dahmani's combination theorem~\cite{Dah-comb}
the relatively hyperbolic group structure on $\pi_1(M)$
defines a non-elementary relatively hyperbolic structure on $\pi_1(DM)$.
Then Proposition~\ref{prop: closed aspherical plus rel hyp} implies
$\norm{DM}>0$, which is false as 
graph manifolds have zero simplicial volume.
\end{proof}

\begin{lem}\label{lem: finte simpl vol}
If $S$ is normal and compact, then $\norm{DM\setminus S}$ is finite.
\end{lem}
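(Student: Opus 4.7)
The plan is to reduce the statement to Gromov's finiteness theorem from~\cite[page 59]{Gro-vol}, which says that if $W$ is a closed smooth manifold and $\Sigma\subset W$ is a compact normal crossings arrangement of codimension at least two, then $\norm{W\setminus\Sigma}<\infty$. Everything else is bookkeeping to put $DM$ and $S$ into this form.

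First I would arrange, by shrinking the cusp neighborhoods in $Q$ if necessary, that $S\cap Q=\varnothing$. This is permitted by the standing conventions in Section~\ref{sec: notations} and possible because $S$ is compact while cusps may be taken arbitrarily small. Then $S$ lies in the interior of the compact manifold with boundary $W:=M\setminus\mathrm{Int}(Q)$, and in particular $S$ is disjoint from a collar of $\partial Q=\partial W$. The double $DM=W\cup_{\partial W}W$ is therefore a closed smooth manifold carrying an open collar of $\partial Q$ that avoids $S$; viewing $S$ as sitting in one of the two copies of $W$, it becomes a compact subset of $DM$.

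Next I would check that $S\subset DM$ is a compact normal crossings arrangement of codimension-two smooth submanifolds. This is inherited directly from the normality of $S$ in $M$: at each $p\in S$ there is a linear isomorphism of $T_pM$ onto $\mathbb R^{n-2m}\times\mathbb C^m$ under which the local components of $S$ map to the coordinate complex hyperplanes, and since $S$ is disjoint from $\partial Q$ the doubling operation does not disturb this local model nor the ambient smooth structure in a neighborhood of $S$. Applying Gromov's theorem to the pair $(DM,S)$ then gives $\norm{DM\setminus S}<\infty$, as required.

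The only real content is the appeal to Gromov's finiteness theorem, and this is where I would expect the main obstacle. On page 59 of~\cite{Gro-vol} the finiteness is established by a stratum-by-stratum construction of a locally finite fundamental cycle of $W\setminus\Sigma$ whose $\ell^1$-norm is controlled using only the codimension bound and the normal crossings local model; neither the ambient curvature of $M$ nor the geodesic nature of the hyperplanes plays any role in that argument. Since both the codimension hypothesis and the normal crossings hypothesis are built into our definition of normality, and since $S$ and $DM$ are both compact, no additional analytic input is needed beyond citing the theorem.
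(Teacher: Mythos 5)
Your reduction to the doubled closed manifold is fine and matches the paper's setup: one arranges $S\cap Q=\varnothing$, doubles $M\setminus\mathrm{Int}(Q)$ along $\d Q$, and observes that $S$ remains a compact normal arrangement of codimension two in $DM$. The problem is the step you yourself identify as ``the only real content'': there is no ready-to-cite theorem on page 59 of~\cite{Gro-vol} of the form ``closed manifold minus a compact codimension-two normal crossings arrangement has finite simplicial volume.'' What Gromov gives there is a \emph{sketch} of exactly this claim, and the entire purpose of Lemma~\ref{lem: finte simpl vol} in the paper is to fill in that sketch. Your proposal is therefore essentially circular: it proves the lemma by citing the lemma.

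What is actually available in~\cite{Gro-vol} is the general Finiteness Theorem (bottom of page 58): $\norm{V}<\infty$ provided $V$ admits a locally finite cover by precompact open sets that are amenable at infinity and whose multiplicity is $\le n$ over a set with precompact complement. Verifying these hypotheses for $DM\setminus S$ is the real work, and it is not a formality. One must (i) cover $S$ by small balls $B_j$ with $\pi_1(B_j\setminus S)$ free abelian (this is where normality enters, giving amenability at infinity), (ii) refine to multiplicity $\le n-1$ using that $S$ is $(n-2)$-dimensional, and (iii) cross with a proper function $f$ to $\mathbb R$ and multiplicity-$2$ interval covers, as in~\cite[Theorem 5.3]{LohSau}, to produce precompact sets $B_j\cap f^{-1}(U)$ whose total multiplicity lands at exactly the threshold $\le n$. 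None of this appears in your write-up, and your description of Gromov's argument (a stratum-by-stratum construction of an $\ell^1$-controlled fundamental cycle) is not how the Finiteness Theorem works; it is an amenable-cover/diffusion argument. To repair the proof you need to carry out the cover construction, or point to a reference that genuinely proves (rather than sketches) the statement you attribute to page 59.
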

\begin{proof}
A proof of finiteness of $\norm{DM\setminus S}$ is sketched
in~\cite[pp. 58--59]{Gro-vol}; 
we fill the details with help of~\cite[Theorem 5.3]{LohSau}.
To state Gromov's result we need two definitions. 
A subset of a space is called {\it amenable}
if for any choice of the basepoint the $\pi_1$-homomorphism induced by inclusion has amenable image. 
A sequence of subsets $U_i$ of a space $X$ is called 
{\it amenable at infinity} if there is an exhaustion of $X$
by sequence of compact sets $K_i$ such that $K_i\subset K_{i+1}$,
$U_i\subset X\setminus K_i$, and 
$U_i$ is amenable in $X\setminus K_i$ for large $i$.

A special case of 
Gromov's Finiteness Theorem~\cite[bottom of page 58]{Gro-vol}
says the following: an $n$-dimensional manifold $V$ has finite 
simplicial volume if $V$ admits 
a locally finite cover by precompact open sets $U_i$ 
that are amenable at infinity, and such that the cover $\{U_i\}$
has multiplicity $\le n$ over a subset that has precompact complement in $V$.

We extend the metric on $M\setminus Q$ arbitrarily to
a metric on $DM$.
Since $S$ is compact and normal in $DM$, we can cover 
$S$ by finitely many small metric balls
with centers on $S$ such that for each such ball $B$ 
we have $\pi_1(B\setminus S)$ is free abelian.
Since $S$ is $(n-2)$-dimensional, this cover has
a finite refinement $\{ B_j\}$ of multiplicity $\le n-1$, and we index this
cover by finite set $J$. Choose the balls sufficiently small so that
the original cover lies in some (closed) regular neighborhood $T_0$ of 
$S$ in $DM$. Also choose
a sequence of regular neighborhoods $T_k$ of $S$
all covered by $\{ B_j\}$ with $\cap_k T_k=S$ and 
$T_k\supset T_{k+1}$. 

Fix a proper function
$f\co T_0\setminus S\to\mathbb R$.
As in the proof of~\cite[Theorem 5.3]{LohSau} 
for each $j\in J$ we
find a cover $\mathcal U_j$ of $\mathbb R$ by bounded open intervals of 
multiplicity $2$ such that $\mathcal U_i\cup \mathcal U_j$ 
has multiplicity $3$ if $i\neq j$; this uses finiteness of $J$. 
Consider the cover of $T_0\setminus S $ by sets $B_j\cap f^{-1}(U)$ with
$U\in \mathcal U_j$. Each $B_j\cap f^{-1}(U)$ is relatively compact because
sets in $\mathcal U_j$ are bounded and $f$ is proper. Thus
we have covered $T_1\setminus S$ by relatively compact open subsets
$B_j\cap f^{-1}(U)$. By an elementary
argument in the proof of~\cite[Theorem 5.3]{LohSau} this cover has multiplicity $\le n$. 
It remains to show this cover is amenable at infinity, i.e. 
if $B_j\cup f^{-1}(U)$ lies in some $T_k$, then it is amenable there.
Since $T_k\hookrightarrow T_0$ is a homotopy equivalence,
it is enough to show that $B_j\cup f^{-1}(U)$ is amenable in $T_0$,
which follows as the inclusion factors through the abelian group
$\pi_1(B_j\setminus S)$.
\end{proof}

\begin{rmk}
An alternative way to prove that the boundary of a regular neighborhood
of $S$ has zero simplicial volume would be to show that it admits an 
$F$-structure, and then apply the result of
Paternain-Petean~\cite{PatPet} 
that any manifold with an $F$-structure
has zero simplicial volume. The existence of an $F$-structure,
or even the existence of local torus actions
that commute on overlaps, should follow from
normality of $S$, but we do not attempt proving it here,
as we do not see any applications.
\end{rmk}

\section{Other applications}
\label{sec: applications}

In this section we prove Corollary~\ref{cor-intro: disjoint appl},
Proposition~\ref{prop: closed aspherical plus rel hyp}, 
Theorem~\ref{thm-intro: centralizer}, and 
Theorem~\ref{thm-intro: simpl vol}.

\begin{proof}[Proof of Corollary~\ref{cor-intro: disjoint appl}]
The peripheral subgroups in this case are either 
virtually nilpotent, or have a normal infinite cyclic subgroups
with hyperbolic quotient. That $\pi_1(M\setminus S)$ are 
residually hyperbolic is provided exactly as in
the proof of~\cite[Corollary 1.4(4)]{Bel-ch-warp}. 
That $\pi_1(M\setminus S)$ is co-Hopf and has finite outer automorphism
group follows from~\cite[Theorem 1.3]{Bel-relhypz},
which also implies that $\pi_1(M\setminus S)$ does not split
nontrivially over elementary subgroups. By~\cite{Tuk}
every non-elementary subgroup contains a non-abelian free subgroup.
All the other asserted properties are proved exactly as in the proof of
in~\cite[Theorem 1.1]{Bel-rh-warp}.
\end{proof}

\begin{proof}[Proof of Proposition~\ref{prop: closed aspherical plus rel hyp}]
We can assume $M$ is orientable as relative hyperbolicity is
inherited by finite index subgroups. 
Let $X$ be an Eilenberg-MacLane space for $\pi_1(L)$
in which peripheral subgroups are realized by $\pi_1$-injective
inclusions of aspherical subspaces $A_i$, one for each conjugacy class
of peripheral subgroups. Let $A=\cup_i A_i$. Peripheral subgroups
of a non-elementary relatively hyperbolic group have infinite
index, hence each $A_i$ is homotopy equivalent to an open 
$n$-manifold; hence $H_n(A_i)=0$, which implies $H_n(A)=0$. 
By the homology long exact sequence of the pair, the map
$H_n(X)\to H_n(X,A)$ is injective, i.e. the fundamental class $[L]$
is mapped to a nonzero element. By the main result of 
Mineyev-Yaman~\cite{MinYam} that element has positive
simplicial norm, and since simplicial norm is non-increasing
under continuous maps we conclude $\norm{L}>0$.
\end{proof}

Theorem~\ref{thm-intro: centralizer} is the manifold version
of Theorem~\ref{thm: centralizer} below, to for which we
adopt notations and assumptions of Section~\ref{sec: notations}
except those involving $C$.

\begin{thm} \label{thm: centralizer}
Suppose that $n>3$ and $\mathcal D$ is normal.\newline
\textup{(a)}
If $h\in \mathcal D$, 
then $h/\mathrm{Stab}_G(h)$ has finite volume.\newline
\textup{(b)}
There is a nontrivial element of $\G$ whose centralizer contains
a non-abelian free subgroup. In particular, $\G$
is not isomorphic to a discrete group of isometries of a 
negatively pinched Hadamard manifold.
\end{thm}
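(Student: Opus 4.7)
For part (a) the plan is a tubular-neighborhood volume comparison. Since $h$ is totally geodesic of codimension two, for any $\e$ less than the normal injectivity radius of $h$ the nearest-point projection exhibits $N_\e(h)$ as a $2$-disk bundle over $h$, and with sectional curvature in $[\varkappa,-1]$ a Fubini/Rauch estimate gives
\[
\mathrm{vol}_n\bigl(N_\e(h)\bigr)\;\ge\;c(\varkappa,\e)\cdot\mathrm{vol}_{n-2}(h).
\]
Hence it suffices to show the map $N_\e(h)/\mathrm{Stab}_G(h)\to Y/G$ has uniformly bounded multiplicity, for then its image sits in the finite-volume set $Y/G$ and we conclude $\mathrm{vol}_{n-2}(h/\mathrm{Stab}_G(h))<\infty$. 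Away from cusps, bounded multiplicity is immediate from local finiteness of $\mathcal D$. Inside a horoball $B\in\mathcal H$ only hyperplanes asymptotic to the center of $B$ can reach a sufficiently deep sub-horoball, and the standing orthogonality assumption forces each such hyperplane to meet $B$ in a sub-horoball of itself; the cocompact action of the parabolic stabilizer $G_B$ on horospheres then bounds the cusp multiplicity.

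For part (b) the plan is to take the meridian around some $h\in\mathcal D$ as the desired element. Fix $h\in\mathcal D$. Since $\dim h=n-2\ge 2$, part (a) says $\mathrm{Stab}_G(h)$ is a non-elementary finite-covolume discrete isometry group of the pinched negatively curved manifold $h$, so a standard ping-pong argument on $\partial_\infty h$ yields a non-abelian free subgroup $F\le\mathrm{Stab}_G(h)$. The normal holonomy $\mathrm{Stab}_G(h)\to O(2)$ has finite image (discrete subgroups of $O(2)$ are finite), so after replacing $F$ by a finite-index free subgroup we may assume $F$ acts trivially on the normal bundle of $h$. Let $\mu_h\in\Gamma$ be a small meridian loop around $h$ in $Y_0/G$. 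Normality of $\mathcal D$ supplies local product coordinates $h\times D^2$ near each generic point of $h$ in which $F$ acts trivially on the $D^2$-factor; globalizing via the orthogonal projection of the normal $\mathbb R^2$-bundle of $h$ in $Y$ produces a lift $\widetilde F\le\Gamma$ of $F$ that commutes with $\mu_h$, so $\widetilde F\le C_\Gamma(\mu_h)$ is the sought non-abelian free subgroup in a centralizer.

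For the ``in particular'' conclusion, recall that in any discrete isometry group of a negatively pinched Hadamard manifold the centralizer of an infinite-order element is virtually nilpotent: a hyperbolic element forces its centralizer into the stabilizer of the pair of axis-endpoints at infinity (virtually abelian), and a parabolic one into the virtually nilpotent stabilizer of its unique fixed point at infinity. The element $\mu_h$ is non-trivial, and has infinite order whenever $G$ acts freely on $Y_0$, since then $Y_0\to Y_0/G$ is a covering and $\mu_h$ already has infinite order in $\pi_1(Y_0)\hookrightarrow\Gamma$; in the residual orbifold case one replaces $\mu_h$ by a suitable infinite-order element in the extension lifting $\mathrm{Stab}_G(h)$. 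In either case $\widetilde F\subset C_\Gamma(\mu_h)$ contradicts the above dichotomy, ruling out that $\Gamma$ is a discrete isometry group of a pinched negatively curved Hadamard manifold.

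The main technical obstacle is the bounded-multiplicity estimate near cusps in (a), which requires simultaneously the orthogonality of horoballs to the hyperplanes they meet and the cocompactness of the parabolic on its horospheres; once (a) is in hand, part (b) reduces to covering-space bookkeeping and the standard centralizer dichotomy for isometries in pinched negative curvature.
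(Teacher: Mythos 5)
Your part (a) takes a genuinely different route from the paper (which works intrinsically in $h$: the truncated part $h^c/\mathrm{Stab}_G(h)$ is compact by local finiteness of $\mathcal D$ and cocompactness of $G$ on $Y^c$, and each cusp of $h/\mathrm{Stab}_G(h)$ has finite volume because the relevant parabolic subgroup acts cocompactly on the horospheres $\d B\cap h$). Unfortunately your route has a step that fails: the multiplicity of $N_\e(h)/\mathrm{Stab}_G(h)\to Y/G$ is \emph{not} uniformly bounded in the cusps. If $h$ is asymptotic to the center $z$ of a horoball $B$ and $\mathrm{Stab}_G(z)\cap\mathrm{Stab}_G(h)$ has infinite index in $\mathrm{Stab}_G(z)$, then infinitely many $G$-translates of $h$ are asymptotic to $z$, and since asymptotic disjoint hyperplanes converge toward their common ideal point, the number of translates within distance $\e$ of a point at depth $t$ in $B$ grows without bound as $t\to\infty$ (e.g.\ parallel vertical $\mathbf{H}^2$'s over a $\mathbb{Z}^2$-orbit of lines in the upper half-space model of $\mathbf{H}^4$). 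This is precisely the configuration the paper's sparseness discussion allows for. One can still try to bound $\int\mathrm{mult}\,d\mathrm{vol}$ by weighing the multiplicity growth against the exponential decay of horospherical volume, but doing that carefully essentially reproduces the paper's direct computation of the cusp volume of $h/\mathrm{Stab}_G(h)$, so the detour through the ambient tube buys nothing and, as written, the claim "uniformly bounded multiplicity" is false.

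In part (b) your unified meridian argument glosses over exactly the case the paper found necessary to treat separately, namely when some hyperplane of $\mathcal D$ meets $h$. First, the "normal holonomy $\mathrm{Stab}_G(h)\to O(2)$" is not a well-defined homomorphism in variable curvature (the normal connection need not be flat); what is actually needed, and what the paper uses, is only the orientation character on the normal bundle, handled by passing to an index-two subgroup. More seriously, when $h$ is crossed by other hyperplanes, a lift $\tilde g$ of $g\in F$ conjugates the meridian $\mu_h$ based near $p_0\in h$ to a meridian based near $gp_0$, and these are a priori only conjugate in $\Gamma$, not equal: different generators of $F$ may centralize different conjugates of $\mu_h$, and your "globalizing via the orthogonal projection" does not explain why a single element is centralized by all of $\widetilde F$. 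The paper avoids this entirely in the intersecting case by a different construction: using finite covolume of $\mathrm{Stab}_G(h)$ on $h$ to produce $h_1, h_2$ orthogonal to $h$ and disjoint from each other, exhibiting a split copy of $\pi_1(Y\setminus\{h,h_1,h_2\})\cong\mathbb{Z}\times(\mathbb{Z}\ast\mathbb{Z})$ inside $\pi_1(Y\setminus D)\le\Gamma$, with the free factor generated by meridians of $h_1,h_2$ rather than by lifts of stabilizer elements. Your lifting argument is essentially the paper's argument in the disjoint case only, where $\pi_1(Y\setminus h)\cong\mathbb{Z}$ makes the bookkeeping clean; as stated it does not cover the general situation. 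The closing reduction to virtual nilpotence of centralizers of infinite-order elements is fine.
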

\begin{proof} (a) Note that $h$ 
is a negatively pinched manifold of dimension $\ge 2$.
By Section~\ref{sec: notations}, we can always arrange that
if $B\in\mathcal H$, then either $h$ and $B$ are disjoint,
or $h$ is asymptotic to the center of $B$. In the latter case
$h\cap B$ is a horoball in $h$. The set $\mathcal H_h$
of horoballs in $\mathcal H$
that intersect $h$ is $\mathrm{Stab}_G(h)$-invariant. 
Let $Y^c$, $h^c$ be the complements in $Y$, $h$
of interiors of the horoballs in $\mathcal H$,
$\mathcal H_h$, respectively. 
Since $Y^c /G$ is compact, local finiteness 
of $\mathcal D$ implies that $h^c/\mathrm{Stab}_G(h)$ is a
compact subset of $Y^c /G$. 
(Otherwise, there is a sequence of points in $h^c$, whose 
projections
in $h^c/\mathrm{Stab}_G(h)$ converge to a point outside  
$h^c/\mathrm{Stab}_G(h)$, and hence the points have lifts lying
on $G$-images of $h$ and converging to a point of $Y$, contradicting
local finiteness).
Thus for each $B\in\mathcal H_h$,
the subgroup of $\mathrm{Stab}_G(h)$ that preserves the
horoball $h\cap B$ acts  
cocompactly in $\d B\cap h$, so $h/\mathrm{Stab}_G(h)$ has finite volume.

(b)
First suppose that there are $h, h_1\in\mathcal D$ that intersect.
Since $h/\mathrm{Stab}_G(h)$ has finite volume, 
there is $g\in\mathrm{Stab}_G(h)$
that moves $h\cap h_1$ to some disjoint hyperplane in $h$,
e.g. let $g$ be hyperbolic element whose attracting limit point
is not at infinity of $h\cap h_1$, so that powers of $g$
bring $h\cap h_1$ within an arbitrary small neighborhood
of the attracting point. Since $h_1$ is orthogonal to $h$, so
is $h_2:=g(h_1)$, and then $h_1$, $h_2$ must be disjoint. 
Since $\mathcal D$ is normal, 
$\pi_1(Y\setminus \{h,h_1,h_2\})$ is the amalgamated
product of $\pi_1(Y\setminus \{h, h_1\})\cong\mathbb Z^2$ 
and $\pi_1(Y\setminus \{h, h_2\})\cong\mathbb Z^2$ along 
$\pi_1(Y\setminus \{h\})\cong\mathbb Z$. 
The obvious surjection $\pi_1(Y\setminus D)\to 
\pi_1(Y\setminus \{h,h_1,h_2\})$ splits.
(Indeed, take a smooth path in $h$ that joins $h$ to $h_1$
and intersects no other hyperplane in $\mathcal D$. 
The path has a small neighborhood $V$  
that intersects no hyperplane in $\mathcal D\setminus \{h,h_1,h_2\}$,
and has the property that the inclusion-induced map
$\pi_1(V\setminus \{h,h_1,h_2\})\to \pi_1(Y\setminus \{h,h_1,h_2\})$
is an isomorphism, hence the inclusion-induced map
$\pi_1(V\setminus \{h,h_1,h_2\})\to \pi_1(Y\setminus\mathcal D)$
gives rise to a splitting.)
Since $\pi_1(Y\setminus D)$ is a
subgroup of $\G$, so is $\pi_1(Y\setminus \{h,h_1,h_2\})$,
which by above is isomorphic to 
$\mathbb Z\times (\mathbb Z\ast\mathbb Z)$, and hence
the centralizer of 
an element of $\G$ contains $\mathbb Z\ast\mathbb Z$.

It remains to study the case when no two hyperplanes in $\mathcal D$
intersect. Fix any $h\in\mathcal D$. Then $\mathrm{Stab}_G(h)$ acts on
$Y\setminus h$, hence an index two subgroup of $\mathrm{Stab}_G(h)$
acts trivially on $\pi_1(Y\setminus h)\cong\mathbb Z$.
By (a) $h/\mathrm{Stab}_G(h)$ has finite volume, so some
$g_1, g_2\in \mathrm{Stab}_G(h)$ generate a free subgroup.
Consider $\g\in\pi_1(Y\setminus D)\subset\G$ generated
by a loop around $h$. The loop is preserved by $g_i$
up to free homotopy, hence there is a lift $\g_i\in\G$ of $g_i$ 
that commutes with $\g$. Since $g_1, g_2$ generate a free subgroup,
so do $\g_1, \g_2$, as promised. 

Finally, in a discrete group of isometries of a 
negatively pinched Hadamard manifold the centralizer $C(g)$
of any infinite order element $g$ is virtually nilpotent. 
(Indeed, $C(g)$ preserves the limit set of $g$, which consists
of one or two points. In the former case, $C(g)$
is virtually nilpotent by~\cite{Bow-parab}, and in the latter
case $C(g)$ is virtually-$\mathbb Z$ because it acts properly 
discontinuously on the geodesic line joining the limit points).
\end{proof}

\begin{rmk}
Part (a) supplies an elementary proof for a claim made
in~\cite{ACT-orth} and proved later in~\cite{ACT-add}.
\end{rmk}

The proposition below is immediate from 
the main result of~\cite{MinYam}.

\begin{prop} 
\label{prop: excision}
For $n>1$, let $N$ be a compact aspherical $n$-manifold
with boundary. Then $\norm{N,\d N}> 0$ if
$N$ contains a codimension zero compact submanifold $U$ with 
$\d N\subset\mathrm {Int} (U)$ and such that\newline
$\bullet$ 
every component $U_i$ of $U$ is aspherical and $\pi_1$-incompressible in $N$,\newline
$\bullet$ 
$\pi_1(N)$ is non-elementary hyperbolic relative to $\{\pi_1(U_i)\}$.
\end{prop}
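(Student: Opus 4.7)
The plan is to mirror the argument of Proposition~\ref{prop: closed aspherical plus rel hyp}: push the fundamental class $[N,\d N]$ forward to $H_n(N,U)$ along the inclusion of pairs $(N,\d N)\hookrightarrow(N,U)$ (legal because $\d N\subset\mathrm{Int}(U)\subset U$), show that the image is nonzero, and then invoke the main result of Mineyev-Yaman to produce a positive simplicial-norm bound in $H_n(N,U)$. Since the induced map $H_n(N,\d N)\to H_n(N,U)$ is simplicial-norm non-increasing, this bound transfers back to $\norm{N,\d N}$.

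First I would pass to the orientation double cover in order to assume $N$ is orientable. Relative hyperbolicity, $\pi_1$-injectivity of the $U_i$, asphericity of the components of the preimage of $U$, and the property $\norm{N,\d N}>0$ are all preserved under this finite cover, so this reduction is harmless.

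Next I would verify that $\iota_*[N,\d N]\ne 0$ in $H_n(N,U)$. Let $W:=\overline{N\setminus U}$ and $\d_0U:=\d U\cap\mathrm{Int}(N)$. Since $\d N\subset\mathrm{Int}(U)$, $W$ is a compact $n$-manifold with boundary exactly $\d_0U$, and $W$ is nonempty: otherwise $U=N$, so some component $U_i$ would equal $N$, forcing $\pi_1(U_i)=\pi_1(N)$, which contradicts the non-elementarity of the relatively hyperbolic structure (peripheral subgroups of a non-elementary relatively hyperbolic group have infinite index). Excision then gives $H_n(N,U)\cong H_n(W,\d_0U)$, and this isomorphism sends $\iota_*[N,\d N]$ to the fundamental class $[W,\d_0U]$, which is nonzero.

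Finally I would invoke the main result of Mineyev-Yaman~\cite{MinYam} in the same form as in the proof of Proposition~\ref{prop: closed aspherical plus rel hyp}: because $N$ is aspherical, each $U_i$ is aspherical and $\pi_1$-injectively embedded in $N$, and $\pi_1(N)$ is non-elementary hyperbolic relative to $\{\pi_1(U_i)\}$, every nonzero class in $H_n(N,U)$ has positive simplicial norm. Combined with the inequality $\norm{N,\d N}\ge\norm{\iota_*[N,\d N]}$, this yields $\norm{N,\d N}>0$. The main obstacle is the excision identification --- verifying that $[N,\d N]$ really does map to $[W,\d_0U]$ under $H_n(N,U)\cong H_n(W,\d_0U)$ --- which is pure bookkeeping with relative chains but requires keeping straight the split of $\d U$ into the piece lying in $\d N$ and the inner piece $\d_0U$.
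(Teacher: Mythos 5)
Your proposal is correct and follows essentially the same route as the paper's proof: excise to identify $H_n(N,U)$ with $H_n(W,\d_0 U)$ (the paper writes $L=N\setminus\mathrm{Int}(U)$ for your $W$), observe that $[N,\d N]$ maps to the nonzero class $[L,\d L]$, apply Mineyev--Yaman to get positivity of the simplicial norm in $H_n(N,U)$, pull back along the norm-non-increasing map from $H_n(N,\d N)$, and handle non-orientability via the orientation double cover. Your explicit check that $W\neq\varnothing$ via non-elementarity is a small addition the paper leaves implicit, but the argument is otherwise identical.
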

\begin{proof} Suppose first that $N$ is orientable.
The submanifold $L:=N\setminus\mathrm {Int}(U)$ is compact,
so the fundamental class of $[L, \d L]$ generates 
$H_n(L,\d L)\cong\mathbb Z$ which equals to $H_n(N,U)$ by excision.
But in 
$H_n(N,U)$ the classes $[N,\d N]$, $[L, \d L]$ are equal,
so $[N,\d N]$ is nonzero in $H_n(N,U)$. 
By~\cite{MinYam} nonzero classes in $H_k(N,U)$, $k>1$ 
have positive simplicial norm; this applies to
$[N,\d N]\in H_k(N,U)$. Since simplicial norm is nonincreasing
under continuous maps, $[N,\d N]$ has positive simplicial
norm in $H_n(N, \d N)$, as claimed. 
The case when $N$ is non-orientable follows
by working in the oriented $2$-fold covers because 
codimension zero embeddings of manifold are orientation-true,
and relative hyperbolicity is inherited by finite index subgroups.
\end{proof}

\begin{proof}[Proof of Theorem~\ref{thm-intro: simpl vol}]
Fix $Q\subset \mathrm{Int}(V)$ 
as in the introduction, and pick
closed regular neighborhood $T_1$, $T_2$ of $S\cup Q$ in $M$
so small that \[
T_1\subset \mathrm{Int}(T_2)\subset\mathrm{Int}(V).
\]
Proposition~\ref{prop: excision} applies 
to $N:=M\setminus \mathrm{Int}(T_1)$ and
$U:=V\setminus \mathrm{Int}(T_2)$, therefore $\norm{N,\d N}>0$.
It is immediate from definitions that
$\norm{\mathrm{Int}(N)}\ge\norm{N, d N}$, 
see~\cite[pages 17, 58]{Gro-vol}, and the claimed 
result follows as $\mathrm{Int}(N)$ is diffeomorphic to 
$M\setminus S$. 
\end{proof}

\appendix

\section{Some facts about CAT$(-1)$ spaces}
\label{app: cat(-1)}

Results of this appendix are well-known but we could not 
find references.

\begin{lem}
\label{lem: unique segment}
If $A, B$ are closed convex
subsets of a complete CAT$(-1)$ space $X$
such that $D:=\mathrm{dist}(A,B)>0$, then there is 
a unique geodesic segment with endpoints in $A, B$
of length $D$.
\end{lem}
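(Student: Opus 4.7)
The plan is to prove existence and uniqueness of a minimizing pair $(a,b)\in A\times B$; since $X$ is uniquely geodesic, the unique segment of length $D$ with endpoints in $A,B$ is then the geodesic from $a$ to $b$. Two elementary observations are used throughout: $A$ and $B$ are themselves complete (closed in a complete space) and contain the midpoint of any two of their points (by convexity and uniqueness of geodesics in $X$); and by the standard CAT$(0)$ convexity of the distance function, if $m^A\in A$ is the midpoint of $a,a'\in A$ and $m^B\in B$ the midpoint of $b,b'\in B$, then
\[
d(m^A,m^B)\le \tfrac12\bigl(d(a,b)+d(a',b')\bigr).
\]

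For existence, I fix a minimizing sequence $(a_n,b_n)$ with $d(a_n,b_n)\to D$. Applied to $a_n,a_k$ and $b_n,b_k$, the midpoint inequality forces $d(m^A_{nk},m^B_{nk})\to D$, while this distance is also $\ge D$. To conclude that $\{a_n\}$ and $\{b_n\}$ are Cauchy, I would compare the geodesic quadrilateral $a_n,a_k,b_k,b_n$ with a Saccheri-type quadrilateral in $\rhtwo$: the CAT$(-1)$ inequality, applied to the two triangles of a diagonal, yields that if two opposite sides of length close to $D$ have midpoint-to-midpoint distance also close to $D$, then in the hyperbolic comparison the remaining two sides are necessarily short, with a modulus vanishing as $\max\{d(a_n,b_n),d(a_k,b_k)\}\to D$. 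Completeness of $X$ together with closedness of $A,B$ then produces limits $a\in A$, $b\in B$ with $d(a,b)=D$.

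For uniqueness, suppose $(a_1,b_1),(a_2,b_2)$ both realize $D$. The midpoint inequality gives $d(m^A,m^B)\le D$ while $m^A\in A$, $m^B\in B$ gives $\ge D$, hence $=D$. Iterating bisection, the geodesics $\gamma_A\co [0,1]\to A$ from $a_1$ to $a_2$ and $\gamma_B\co [0,1]\to B$ from $b_1$ to $b_2$ satisfy $d(\gamma_A(t),\gamma_B(t))=D$ on the dyadic rationals, hence everywhere by continuity. Then $\gamma_A(t)$ is the nearest point of $A$ to $\gamma_B(t)$ for every $t$, and the first-variation characterization of nearest-point projection in CAT$(0)$ forces the segment $[\gamma_A(t),\gamma_B(t)]$ to meet $\gamma_A$ at angle $\ge\pi/2$ on both sides; symmetrically at $\gamma_B(t)$. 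At each of the four vertices of the quadrilateral $a_1 a_2 b_2 b_1$ the angle is therefore $\ge\pi/2$, forcing the angle sum to equal $2\pi$, whereupon the flat quadrilateral theorem embeds the quadrilateral as an isometric flat rectangle of dimensions $d(a_1,a_2)\times D$ in $X$. Since CAT$(-1)$ admits no non-degenerate flat rectangle, $d(a_1,a_2)=0=d(b_1,b_2)$.

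The main obstacle is the existence step: in CAT$(0)$ alone even uniqueness can fail (parallel lines in $\mathbb R^2$), so any proof must exploit strict negative curvature quantitatively, and the Saccheri comparison in $\rhtwo$ is what turns the qualitative $d(m^A_{nk},m^B_{nk})\to D$ into the Cauchy property of the minimizing sequence.
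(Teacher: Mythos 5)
Your uniqueness argument is fine and is a genuinely different (and valid) route from the paper's: the paper gets uniqueness from the same $\rhtwo$--comparison picture it uses for existence, whereas you use iterated midpoint convexity plus the first-variation formula and the flat quadrilateral theorem to produce a flat rectangle, which CAT$(-1)$ forbids. The problem is in the existence step. The implication you rely on --- ``a comparison quadrilateral in $\rhtwo$ whose two opposite sides $[\bar a_n,\bar b_n]$, $[\bar a_k,\bar b_k]$ have length close to $D$ and whose midpoint-to-midpoint distance is close to $D$ must have short remaining sides'' --- is false. Take four points on a single geodesic of $\rhtwo$ (or of $\mathbb{R}^2$): $\bar a_n=0$, $\bar a_k=L$, $\bar b_n=D$, $\bar b_k=L+D$. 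Then $d(\bar a_n,\bar b_n)=d(\bar a_k,\bar b_k)=D$, the midpoints of $[\bar a_n,\bar a_k]$ and $[\bar b_n,\bar b_k]$ are at distance exactly $D$, and yet the remaining sides have arbitrary length $L$. So the data you propagate to the comparison picture (the two side lengths and the midpoint distance, even together with the diagonals, which here are $L+D$ and $|L-D|$) does not pin down $d(a_n,a_k)$, and no amount of hyperbolic trigonometry applied to that data alone can make the minimizing sequence Cauchy.

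What rules this configuration out in $X$ is information you never transfer to $\rhtwo$: the \emph{entire} segments $[a_n,a_k]$ and $[b_n,b_k]$ lie in $A$ and $B$ respectively, so \emph{every} point of one is at distance $\ge D$ from \emph{every} point of the other, not just midpoints from midpoints. This is exactly the step the paper makes: it shows that the comparison segment $[\bar a_n,\bar a_k]$ lies outside the open $D$-neighborhood of $[\bar b_n,\bar b_k]$ (any shorter connecting path in the comparison figure would concatenate, via the CAT$(-1)$ inequality across the diagonal, into a path of length $<D$ from $A$ to $B$ in $X$), while it lies inside the $(D+\tfrac1i)$-neighborhood because that neighborhood is convex and contains its endpoints. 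A geodesic trapped in the shell between the $D$- and $(D+\delta)$-neighborhoods of a segment in $\rhtwo$ is short because that boundary curve has strictly positive geodesic curvature; this is the quantitative strict-negative-curvature input you correctly anticipate needing, but it only becomes available once the full $D$-separation of the two sides is carried into the comparison picture. Your existence step therefore has a real gap, though it is repairable by replacing the midpoint comparison with the two-sided neighborhood (shell) argument.
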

\begin{proof} 
Let $b\co X\to B$ denote the orthogonal projection. 
To prove existence, take $x_i\in A$ with 
$\mathrm{dist}(x_i, B)\in [D, D+\frac{1}{i})$,
look at the triangles $\D(x_i, x_{j}, b(x_i))$,
$\D(x_{j}, b(x_{j}), b(x_i))$, and lay the corresponding
comparison triangles $\D(\bar x_i, \bar x_{j}, \overline{b(x_i)})$,
$\D(\overline{b(x_{j})}, \bar x_{j},  \overline{b(x_i)})$ in 
the hyperbolic plane $\bf H^2$ next to each other along the side 
$[ \bar x_{j},\overline{b(x_i)}]$.
The distance between the sides
$[\bar x_i, \bar x_{j}]$, $[\overline{b(x_i)}, \overline{b(x_{j})}]$,
is $\ge D$ for if they could be joined by a segment of length $<D$,
then the parts of the segment lying on different
sides of $[ \bar x_{j},\overline{b(x_i)}]$ would define, by comparison,
an even shorter path concatenated from two geodesic segments and
joining $[x_i, x_{j}]\subset A$ with $[b(x_i), b(x_{j})]\subset B$.
Therefore $[\bar x_i, \bar x_{j}]$ lies outside the 
$D$-neighborhood of $[\overline{b(x_i)}, \overline{b(x_{j})}]$, 
and also inside 
$D+\frac{1}{i}$-neighborhood of $[\overline{b(x_i)}, \overline{b(x_{j})}]$
because the neighborhood is convex and contains the endpoints 
of $[\bar x_i, \bar x_{j}]$. 

We now prove that $\{x_i\}$ is Cauchy. Arguing by contradiction
assume that $d(x_i, x_{j(i)})$ is bounded away from zero
for some subsequence $j=j(i)$. Applying isometries of $\bf H^2$
we may assume that $\overline{b(x_i)}$ is independent of $i$, so
as $i\to\infty$ the segments $[\overline{b(x_i)}, \overline{b(x_{j(i)})}]$
subconverge to a segment $S$ (possibly of zero or infinite length), and hence
$[\bar x_i, \bar x_{j(i)}]$ subconverge to a 
geodesic segment on the boundary of the $D$-neighborhood of $S$,
which by elementary hyperbolic geometry 
contains no positive length geodesic segments. 
Thus the limit of $[\bar x_i, \bar x_{j(i)}]$ on the boundary
of the $D$-neighborhood of $S$ is a point, and
hence $d(x_i, x_{j(i)})=d(\bar x_i, \bar x_{j(i)})\to 0$.

Since $X$ is complete and $A$ is closed, $\{x_i\}$ has a limit $x\in A$, and
$[x, b(x)]$ is a segment of length $D$ joining $A, B$. 
Uniqueness also follows 
for if $[x, b(x)]$, $[y, b(y)]$ are length $D$
segments joining $A, B$, then
$[\bar x, \bar y]$ lies on the boundary of the $D$-neighborhood
of $[\overline{b(x)}, \overline{b(y)}]$, forcing $\bar x=\bar y$
and hence $x=y$.
\end{proof}

\begin{rmk}
Existence of a shortest geodesic joining $A,B$ fails if $D=0$ 
(for asymptotic geodesics in $\bf H^2$), or if
$D>0$ but $X=\mathbb R^2$ (for the subsets $y\ge e^x$ and 
$y\le -1$).
\end{rmk}

We refer to Section~\ref{sec: rel hyp} for 
definition of $r$-separation and bounded penetration.

\begin{lem}\label{lem: bounded penetration}
Any family of $r$-separated convex subspaces 
of a CAT$(-1)$ space has bounded penetration.
\end{lem}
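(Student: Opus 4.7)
The plan is to reduce to bounding the diameter of a set of ``close approach points'' on one of the convex sets, then to use the CAT$(-1)$ comparison with $\mathbf{H}^2$ to produce an explicit bound.

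If $\rho<r/2$, the intersection $N_\rho(Q_i)\cap N_\rho(Q_j)$ is empty, since any $z$ there would give $r\le\operatorname{dist}(Q_i,Q_j)\le d(z,Q_i)+d(z,Q_j)\le 2\rho$, contradicting $r$-separation. Assume henceforth $\rho\ge r/2$. For $z\in N_\rho(Q_i)\cap N_\rho(Q_j)$, let $a=\pi_{Q_i}(z)$ be the unique nearest point in $Q_i$, whose existence follows from Lemma~\ref{lem: unique segment} applied to the singleton $\{z\}$ and $Q_i$. Then $d(z,a)\le\rho$ and $r\le d(a,Q_j)\le 2\rho$. Hence the intersection is contained in the $\rho$-neighborhood of the ``footprint'' $\Omega_{ij}:=\{q\in Q_i: d(q,Q_j)\le 2\rho\}$, and it suffices to bound $\operatorname{diam}(\Omega_{ij})$ by a function of $r,\rho$; the overall bound is then $D(\rho):=2\rho+\operatorname{diam}(\Omega_{ij})$.

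For $q,q'\in\Omega_{ij}$, let $b=\pi_{Q_j}(q)$ and $b'=\pi_{Q_j}(q')$, so $r\le d(q,b),\,d(q',b')\le 2\rho$ and the shortest segments $[q,b]$, $[q',b']$ are unique by Lemma~\ref{lem: unique segment}. The key tool is the CAT$(-1)$ Pythagorean inequality (\cite[II.2.4]{BH}, derived from comparison with right-angled hyperbolic triangles): whenever $p=\pi_C(x)$ is the nearest point in a convex set $C$ to $x\notin C$, one has $\cosh d(x,c)\ge\cosh d(x,p)\cosh d(p,c)$ for every $c\in C$. Applying this with $x=q,\,p=b,\,c=b'$ and using $d(q,b)\ge r$ yields $\cosh d(q,b')\ge\cosh(r)\cosh d(b,b')$, and symmetrically $\cosh d(q',b)\ge\cosh(r)\cosh d(b,b')$; one also has the reciprocal pair of inequalities obtained by projecting $b,b'$ onto $Q_i$.

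The bound is then calibrated by comparison with the model configuration in $\mathbf{H}^2$: two ultraparallel geodesics $\gamma_1,\gamma_2\subset\mathbf{H}^2$ with common perpendicular of length $r$ (whose existence is the hyperbolic instance of Lemma~\ref{lem: unique segment}). In that model, the explicit formula $\sinh d(\gamma_1(s),\gamma_2)=\sinh(r)\cosh(s)$ shows that the analogue of $\Omega_{ij}$ has diameter at most $2\operatorname{arccosh}\!\bigl(\sinh(2\rho)/\sinh(r)\bigr)$. The main obstacle is transferring this bound from $\mathbf{H}^2$ to the CAT$(-1)$ setting: the CAT$(-1)$ comparison is stated triangle-by-triangle, while our natural object is a geodesic quadrilateral $q,b,b',q'$ with ``right angles'' at the projection vertices $b,b'$ (in the first-variation sense). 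The resolution is to split the quadrilateral along a diagonal, apply the CAT$(-1)$ comparison and the Pythagorean inequality to each of the two resulting triangles, and combine the estimates; the negative curvature ensures that this yields precisely the $\mathbf{H}^2$ bound and thus the desired $D(\rho)$ depending only on $r$ and $\rho$.
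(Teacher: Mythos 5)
Your reduction to bounding the diameter of the footprint $\Omega_{ij}\subset Q_i$ is fine, the Pythagorean inequality $\cosh d(x,c)\ge\cosh d(x,\pi_C(x))\cosh d(\pi_C(x),c)$ is correct, and the $\mathbf{H}^2$ computation identifies the right target. The problem is that the one step you defer --- ``split the quadrilateral along a diagonal \dots and combine the estimates'' --- is the entire proof, and the estimates you have actually written down provably do not combine to give a bound. Put $t=d(b,b')$. Your inequalities give $\cosh d(q,b')\ge\cosh(r)\cosh t$ (and symmetrically for $d(q',b)$), while the only available upper bound is $d(q,b')\le d(q,b)+t\le 2\rho+t$. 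Since $2\rho\ge r$, the resulting relation $\cosh(2\rho+t)=\cosh(2\rho)\cosh t+\sinh(2\rho)\sinh t\ge\cosh(r)\cosh t$ holds for every $t\ge 0$, so no bound on $t$ follows; the ``reciprocal'' pair fails the same way. Worse, after splitting along $[q,b']$ the triangle $\Delta(q,b',q')$ has no vertex with a known right angle: the projection property controls $\angle_{b'}(q',b)$, not $\angle_{b'}(q',q)$, so there is nothing in that triangle to which the Pythagorean inequality applies. Note that all your inequalities are satisfied by two convex sets staying at constant distance $r$ along arbitrarily long stretches (parallel lines in $\mathbb{R}^2$): they encode the right angles at the feet of perpendiculars but not the divergence forced by curvature $\le -1$, which is the actual content of the lemma.

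To close the gap you need one more genuinely negative-curvature input, e.g.\ an angle estimate showing $\angle_{b'}(b,q)$ is small once $t\gg\rho$ (so that $\angle_{b'}(q',q)$ is nearly $\pi/2$ and the law-of-cosines comparison can be run in the second triangle), or the convexity statement that $\sinh$ of the distance from $[q,q']$ to $Q_j$ dominates $\sinh(r)\cosh(\cdot)$; either route is workable but neither is free, and the exact $\mathbf{H}^2$ constant is not needed anyway. The paper sidesteps all quantitative bookkeeping by arguing by contradiction: unbounded penetration produces quadrilaterals with two sides of length $<D$ and two arbitrarily long sides lying in the two convex sets, and comparison with $\mathbf{H}^2$ (applied to the two triangles cut off by a diagonal --- exactly your splitting) shows the midpoint of the long side in $C_i$ is $2\e_i$-close to the long side in $B_i$ with $\e_i\to 0$, contradicting $r$-separation once $2\e_i<r$. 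That argument needs only the thinness of long CAT$(-1)$ triangles and no right-angle analysis; you should either adopt it or supply the missing angle/convexity estimate explicitly.
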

\begin{proof}
Otherwise there is $D$ such that for each $i$ there are
$r$-separated convex subsets $C_i$, $B_i$ in the family
and points $c_i, c_i^\prime\in C_i$, $b_i, b_i^\prime\in B_i$
such that $d(c_i, b_i)+d(c_i^\prime, b_i^\prime)<D$, 
while $d(c_i, c_i^\prime)>i$, $d(b_i, b_i^\prime)>i$.
By comparison with the hyperbolic plane there is $\e_i\to 0$
such that the midpoint of $[c_i, c_i^\prime]\subset C_i$ is 
$\e_i$-close to a point of $[c_i, b_i^\prime]$ 
which in turn is $\e_i$-close to a point of 
$[b_i, b_i^\prime]\subset B$, and we get a contradiction
for $i$ with $2\e_i<r$. 
\end{proof}

The following generalizes~\cite[Lemma
11.3.4]{ECHLPT} on piecewise geodesics in the real hyperbolic space.

\begin{prop}
\label{prop:TurningAngle}
For each angle $\theta_1>0$ there are constants $r_1$, $\l_1$,
$\e_1$ such that 
any piecewise geodesic in a CAT$(-1)$ space whose
geodesic pieces have length
$\ge r_1$ and such that successive pieces meet at an angle 
$\ge \theta_1$ is a $(\l_1,\e_1)$--quasi-geodesic.
\end{prop}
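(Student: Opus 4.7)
The plan is to deduce everything from a two-piece estimate in the hyperbolic plane $\mathbf{H}^2$ and then iterate along the piecewise geodesic. Let $\gamma=[v_0,v_1]\cup\cdots\cup[v_{n-1},v_n]$ with $|v_{i-1}v_i|\ge r_1$ and Alexandrov angle $\ge\theta_1$ at each interior vertex. By the CAT$(-1)$ condition the angle of a comparison triangle in $\mathbf{H}^2$ does not exceed the corresponding Alexandrov angle in $X$, so any lower bound on distances obtained from hyperbolic trigonometry using the angles in $X$ is valid in $X$ as well. Thus it suffices to do all computations in $\mathbf{H}^2$.

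First I would prove a two-piece lemma in $\mathbf{H}^2$: for a triangle $\bar\Delta(\bar v_0,\bar v_1,\bar v_2)$ with $|\bar v_0\bar v_1|,|\bar v_1\bar v_2|\ge r$ and angle at $\bar v_1$ at least $\theta_1$, the hyperbolic law of cosines
\[
\cosh|\bar v_0\bar v_2|=\cosh|\bar v_0\bar v_1|\cosh|\bar v_1\bar v_2|-\sinh|\bar v_0\bar v_1|\sinh|\bar v_1\bar v_2|\cos\bigl(\pi-\theta_1\bigr)
\]
gives $\cosh|\bar v_0\bar v_2|\ge (1-\cos\theta_1)\cosh|\bar v_0\bar v_1|\cosh|\bar v_1\bar v_2|$ as soon as $r$ is large; taking logarithms yields
\[
|\bar v_0\bar v_2|\ge|\bar v_0\bar v_1|+|\bar v_1\bar v_2|-K(\theta_1)
\]
for an explicit constant $K(\theta_1)$. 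A parallel calculation using the hyperbolic sine law shows that the base angles at $\bar v_0$ and $\bar v_2$ are bounded by a quantity $\delta(r,\theta_1)$ that tends to $0$ as $r\to\infty$.

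Next I would iterate. Fix $r_1$ so large that $\delta(r_1,\theta_1)<\theta_1/2$ and $K(\theta_1)<r_1/2$. Writing $\alpha_k$ for the Alexandrov angle at $v_k$ between $[v_k,v_0]$ and $[v_k,v_{k-1}]$, I would prove by induction on $k$ that $\alpha_k\le\delta(r_1,\theta_1)$ and $d(v_0,v_k)\ge\sum_{i=1}^k|v_{i-1}v_i|-kK(\theta_1)$. For the inductive step, the hinge at $v_k$ consisting of $[v_k,v_0]$ and $[v_k,v_{k+1}]$ has angle at least $\theta_1-\alpha_k>\theta_1/2$ and both sides of length $\ge r_1$ (using $d(v_0,v_k)\ge r_1$ after the first step, which is where the choice $K(\theta_1)<r_1/2$ enters). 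Applying the two-piece lemma to the comparison triangle $\bar\Delta(\bar v_0,\bar v_k,\bar v_{k+1})$ then gives both $d(v_0,v_{k+1})\ge d(v_0,v_k)+|v_kv_{k+1}|-K(\theta_1)$ and $\alpha_{k+1}\le\delta(r_1,\theta_1)$, closing the induction.

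Since each piece has length $\ge r_1$, the inductive bound rearranges to $d(v_0,v_n)\ge (1-K(\theta_1)/r_1)L(\gamma)$ where $L(\gamma)$ is the length of $\gamma$, giving the required $(\lambda_1,\varepsilon_1)$-quasi-geodesic inequality on the vertices; the same bound on arbitrary subarcs follows by breaking at vertices and absorbing the two partial end-pieces into an additive constant. The main obstacle will be the inductive step: one must apply the CAT$(-1)$ inequality to the auxiliary triangle $\Delta(v_0,v_k,v_{k+1})$ whose side $[v_0,v_k]$ is not a piece of $\gamma$ but only a geodesic whose length is controlled by the previous step. Choosing $r_1$ so that the angle-decay function $\delta(\cdot,\theta_1)$ stays below $\theta_1/2$ and the length-defect $K(\theta_1)$ stays below $r_1/2$ is the single quantitative input that makes the iteration self-sustaining.
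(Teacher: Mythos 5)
Your proposal is correct in outline but takes a genuinely different route from the paper. The paper invokes the local-to-global principle for quasi-geodesics in $\delta$--hyperbolic spaces (\cite[Theorem~3.1.4]{CDP}) and therefore only has to verify that any two consecutive pieces form an $r_1$--local $(1,\e_0)$--quasi-geodesic, which it does with a single thin-triangle estimate; all propagation along the path is outsourced to that theorem. You instead propagate by hand: a two-piece hinge estimate in $\mathbf{H}^2$ via the hyperbolic law of cosines, together with an inductive control of the angle $\alpha_k$ between $[v_k,v_0]$ and $[v_k,v_{k-1}]$, in the spirit of \cite[Lemma 11.3.4]{ECHLPT}, transported to $X$ by triangle comparison. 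Your route is more self-contained and yields explicit constants; the paper's is shorter because the hard analytic content is quoted.

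Two cautions. First, you state the comparison inequality backwards: in a CAT$(-1)$ space the Alexandrov angle does \emph{not exceed} the comparison angle, not the reverse. Fortunately the correct direction is exactly what your computations actually use (an Alexandrov angle $\ge\theta_1$ at $v_1$ forces the comparison angle at $\bar v_1$ to be $\ge\theta_1$, whence the law-of-cosines lower bound on $d(v_0,v_2)$; and a small comparison base angle forces a small Alexandrov angle $\alpha_{k+1}$), so the argument survives, but the justification as written would not. Second, in the inductive step the hinge angle at $v_k$ is only bounded below by $\theta_1-\alpha_k>\theta_1/2$, so the constants must be taken as $K(\theta_1/2)$ and $\delta(r_1,\theta_1/2)$ throughout; and the passage from vertex-to-vertex subchains to arbitrary subarcs requires the (routine) case split according to whether each partial end piece has length at least $r_1$: if so it can be adjoined as a genuine piece, and if not it contributes at most $2r_1$ to the additive constant. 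Neither point is a real obstruction.
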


\begin{proof} 
We are to apply~\cite[Theorem~3.1.4]{CDP} that
for any constants $\delta$, $\lambda_0$, $\epsilon_0$ 
there are constants $\lambda_1$, $\epsilon_1$, $r_1$
such any $r_1$--local $(\lambda_0,\epsilon_0)$--quasi-geodesic
in a $\delta$--hyperbolic space is
a global $(\l_1,\e_1)$--quasi-geodesic.

Fix $\delta$ such that 
any CAT$(-1)$ space $\delta$--hyperbolic~\cite[Proposition~III.H.1.2]{BH}. 
Let $c_1$, $c_2$ be two geodesic rays in the hyperbolic plane emanating
from a common point $p$ at angle $\theta$.
By an elementary computation 
$f(\theta,t):= d(c_1(t),c_2(t))$ is increasing in $\theta$ 
for each fixed $t>0$, so let
$\r=\r(\theta_1)$ be the unique solution of $f(\theta_1,\r) = 2\delta$.
Then let $\l_0: = 1$ and $\e_0:= 2\delta + 2\r$, 
and let $\l_1$, $\e_1$, $r_1$
be the constants given by the 
above-mentioned~\cite[Theorem~3.1.4]{CDP}.

Given a piecewise geodesic $c$ whose pieces have length 
$\ge r_1$ and meet at angles $\ge\theta_1$, it remains to 
verify that $c$ is an $r_1$--local
$(1,\e_0)$--quasi-geodesic, i.e. for any points $p,q$
on the quasi-geodesic with $d(p,q)\le r_1$ we need to check that
the distance between $p,q$ along $c$ is
$\le d(p,q)+\e_0=d(p,q)+2\delta + 2\r$.
It is enough to consider $p$, $q$ lying 
on consecutive geodesic pieces of $c$; suppose
they form angle $\theta$ at their common vertex $u$.

If either $p$ or $q$, say $q$ is within distance $\r$ of $u$, then
\[
d(p,q) \ge d(p,u) - d(u,q) \ge
d(p,u) + d(u,q) - 2\r,
\] 
as desired, so assume that both $d(p,u)$ and $d(q,u)$ are $\ge \r$.
Consider the geodesic triangle $\Delta = \Delta(p,q,u)$ and a comparison
triangle $\bar\Delta=\bar\Delta(\bar{p},\bar{q},\bar{u})$ in the
hyperbolic plane.
Take $a \in [p,u]$ and $b \in [q,u]$ with $d(a,u)=d(b,u) = \r$,
and let $\bar{a}$ and $\bar{u}$ be comparison points in $\bar\Delta$.
Let $\bar\theta = \angle_{\bar{u}} (\bar{p},\bar{q})$.
Since $\bar\theta \ge \theta \ge \theta_1$, we must have
$d(\bar{a},\bar{b}) = f(\bar\theta,\r) \ge 2\delta$.
By the $\delta$--thinness of $\bar\Delta$,
there exist $\bar{v}$ and $\bar{w}$ on $[\bar{p},\bar{q}]$
such that $d(\bar{a},\bar{v})$ and $d(\bar{b},\bar{w})$ are both at
most $\delta$ and such that
$\bar{w}$ lies between $\bar{v}$ and $\bar{q}$.
Let $v$ and $w$ be the points on $[p,q]$ corresponding to $\bar{v}$
and $\bar{w}$.
Then $d(a,v)$ and $d(b,w)$ are at most $\delta$ and $w$ lies between
$v$ and $q$.
As $d(p,v) \ge d(p,a) - \delta$ and $d(q,w) \ge d(q,b) - \delta$ we get
\[
d(p,q) \ge d(p,v) + d(w,q) \ge d(p,a) + d(q,b) - 2\delta
= d(p,u) + d(u,q) - 2\delta - 2\r,
\] as claimed.
\end{proof}

\small
\bibliographystyle{amsalpha}
\bibliography{rh-qc-revised}
\end{document}